\newif\ifwias
\newif\ifdraftmode
    \newcommand{\revision}[1]{{\color{MidnightBlue}#1}}
    \newcommand{\appr}[2]{{\color{ForestGreen}\ensuremath{#1_{#2}}}}  %
    \newcommand{\eppr}[2]{{\color{ForestGreen}\ensuremath{#1_{#2,n}}}}  %
    \newcommand{\revision}[1]{#1}
    \newcommand{\appr}[2]{\ensuremath{#1_{#2}}}  %
    \newcommand{\eppr}[2]{\ensuremath{#1_{#2,n}}}  %
\renewcommand{\vec}[1]{\mathbf{#1}}     %
    \renewcommand{\todo}[2][]{\@bsphack\@todo[#1]{\textcolor{black}{#2}}\@esphack\ignorespaces}
\newtheorem{theorem}{Theorem}[section]
\newtheorem{lemma}[theorem]{Lemma}
\newtheorem{corollary}[theorem]{Corollary}
\newtheorem{definition}[theorem]{Definition}
\newtheorem{example}[theorem]{Example}
\newtheorem{remark}[theorem]{Remark}
\tikzset{>=latex}
\definecolor{dred}{rgb}{0.8,0,0}
\definecolor{dgreen}{rgb}{0,0.45,0}
\tikzset{
    diagonal fill/.code 2 args={
        \pgfdeclareverticalshading[%
            tikz@axis@top,tikz@axis@middle,tikz@axis@bottom%
        ]{diagonalfill}{100bp}{%
            color(0bp)=(tikz@axis@bottom);
            color(50bp)=(tikz@axis@bottom);
            color(50bp)=(tikz@axis@middle);
            color(50bp)=(tikz@axis@top);
            color(100bp)=(tikz@axis@top)
        }
        
        \tikzset{shade, left color=#1, right color=#2, shading=diagonalfill}
    }
}
\tikzset{
    diagonal bar/.code 2 args={
        \pgfdeclareverticalshading[%
            tikz@axis@top,tikz@axis@middle,tikz@axis@bottom%
        ]{diagonalbar}{6bp}{%
            color(0bp)=(tikz@axis@bottom);
            color(50bp)=(tikz@axis@bottom);
            color(50bp)=(tikz@axis@middle);
            color(50bp)=(tikz@axis@top);
            color(1000bp)=(tikz@axis@top)
        }
        
        \tikzset{shade, left color=#1, right color=#2, shading=diagonalbar}
    }
}
\tikzset{Basic/.style={
	rectangle,
	inner sep=2pt,
	minimum width=2.2em,
	minimum height=2.2em,
	rounded corners,
	line width=0.3mm
}}
\tikzset{nix/.style={
	Basic,
    draw=none,
	fill=none
}}
\tikzset{T/.style={
    Basic,
	draw=black, 
	text=black,
	fill=black!10
}}
\tikzset{GT/.style={
    Basic,
	draw=gray,
	text=gray,
	fill=black!5
}}
\tikzset{DT/.style={
	T,
	diagonal bar={black!40}{black!10},
	shading angle=45
}}
\tikzset{GDT/.style={
	GT,
	diagonal bar={black!20}{black!5},
	shading angle=45
}}
\tikzset{LT/.style={ 
	T,
	diagonal fill={black!40}{black!10}, 
	shading angle=45 
}}
\tikzset{LTI/.style={ 
	T,
	diagonal fill={black!10}{black!40}, 
	shading angle=45 
}}
\tikzset{RT/.style={
	T, 
	diagonal fill={black!40}{black!10},
	shading angle=-45
}}
\tikzset{RTI/.style={
	T, 
	diagonal fill={black!10}{black!40},
	shading angle=-45
}}
\tikzset{GLT/.style={ 
	GT,
	diagonal fill={black!20}{black!5},
	shading angle=45 
}}
\tikzset{GRT/.style={ 
	GT,
	diagonal fill={black!20}{black!5},
	shading angle=-45 
}}
\newcounter{sarrow}
\newcommand{\mbb}[1]{\mathbb{#1}}
\newcommand{\mcal}[1]{\mathcal{#1}}
\DeclareMathOperator*{\argmin}{arg\,min}
\DeclareMathOperator*{\esssup}{ess\,sup}
\DeclarePairedDelimiter{\pars}{\ensuremath{(}}{\ensuremath{)}}
\DeclarePairedDelimiter{\bracs}{\ensuremath{[}}{\ensuremath{]}}
\DeclarePairedDelimiter{\braces}{\ensuremath{\{}}{\ensuremath{\}}}
\DeclarePairedDelimiter{\floor}{\lfloor}{\rfloor}
\DeclarePairedDelimiter{\inner}{\langle}{\rangle}
\DeclarePairedDelimiter{\norm}{\|}{\|}
\DeclarePairedDelimiter{\abs}{\lvert}{\rvert}
\DeclarePairedDelimiter{\absDagger}{\dagger}{\dagger}
\DeclarePairedDelimiter{\normDagger}{{\dagger\hspace{-0.24em}\dagger}}{{\dagger\hspace{-0.24em}\dagger}}
\newcommand{\opnorm}{\@ifstar\@opnorms\@opnorm}
\newcommand{\@opnorms}[1]{%
  \left|\mkern-1.5mu\left|\mkern-1.5mu\left|
   #1
  \right|\mkern-1.5mu\right|\mkern-1.5mu\right|
}
\newcommand{\@opnorm}[2][]{%
  \mathopen{#1|\mkern-1.5mu#1|\mkern-1.5mu#1|}
  #2
  \mathclose{#1|\mkern-1.5mu#1|\mkern-1.5mu#1|}
}
\DeclareMathOperator{\spn}{span}
\newcommand*{\dd}{\ensuremath{\mathrm{d}}}
\newcommand*{\dx}[1]{\ensuremath{\,\dd{#1}}}
\newcommand*{\dmx}[2]{\ensuremath{\,\dd{#1(#2)}}}
\let\oldbullet\bullet
\newlength{\raisebulletlen}
\renewcommand\bullet{\raisebox{\raisebulletlen}{\,\tiny$\oldbullet$}\,}
\let\oldcolon\colon
\renewcommand{\colon}{\,\oldcolon\,}
\newcommand{\iu}{{\mathrm{i}\mkern1mu}}
\mathchardef\mhyphen="2D
\renewcommand{\qedsymbol}{\ensuremath{\blacksquare}}
\newtheorem*{main_result}{Main result}
\newcommand{\rip}[2]{\ensuremath{\operatorname{RIP}_{#1}\pars{#2}}}
\newcommand{\apprum}{\appr{u}{\mcal{M}}\xspace}
\newcommand{\epprum}{\eppr{u}{\mcal{M}}\xspace}
\title{Convergence bounds for empirical nonlinear least-squares}
\date{\today}
\author[Eigel]{Martin Eigel}
\address{Weierstrass Institute\\Mohrenstrasse 39\\D-10117 Berlin\\Germany}
\email{martin.eigel@wias-berlin.de}
\author[Schneider]{Reinhold Schneider}
\address{TU Berlin\\Stra{\ss}e des 17. Juni 136\\D-10623 Berlin\\Germany}
\email{schneidr@math.tu-berlin.de}
\author[Trunschke]{Philipp Trunschke}
\address{TU Berlin\\Stra{\ss}e des 17. Juni 136\\D-10623 Berlin\\Germany}
\email{ptrunschke@mail.tu-berlin.de}
\begin{document}

\maketitle

\begin{abstract}
    We consider best approximation problems in a nonlinear subset $\mathcal{M}$ of a Banach space of functions $(\mathcal{V},\|\bullet\|)$.
    The norm is assumed to be a generalization of the $L^2$-norm for which only a weighted Monte Carlo estimate $\|\bullet\|_n$ can be computed.
    The objective is to obtain an approximation $v\in\mathcal{M}$ of an unknown function $u \in \mathcal{V}$ by minimizing the empirical norm $\|u-v\|_n$.
    We consider this problem for general nonlinear subsets and establish error bounds for the empirical best approximation error.
    Our results are based on a restricted isometry property (RIP) which holds in probability and is independent of the nonlinear least squares setting.
    \revision{Several model classes are examined where analytical statements can be made about the RIP and the results are compared to existing sample complexity bounds from the literature.
    We find that for well-studied model classes our general bound is weaker but exhibits many of the same properties as these specialized bounds.
    Notably, we demonstrate the advantage of an optimal sampling density (as known for linear spaces) for sets of functions with sparse representations.}
\end{abstract}

\section{Introduction, Scope, Contributions}
\label{sec:introduction}

We consider the problem of estimating an unknown function $u$ from noiseless observations.
For this problem to be well-posed, some prior information about $u$ has to be assumed,
which often takes the form of regularity assumptions.
To make this notion more precise, we assume that $u$ is an element of some Banach space of functions $\pars{\mcal{V}, \norm{\bullet}}$
that can be well approximated in a given \revision{nonlinear subset (or \emph{model class})} $\mcal{M} \subseteq\mcal{V}$.
The approximation error is measured in the norm

\begin{equation}
\label{eq:stdnorm}
    \norm{v}
    \coloneqq \pars*{\int_Y \abs{v}_y^2\dx{\rho}\pars{y}}^{1/2},
\end{equation}
where $Y$ is some Borel subset of $\mbb{R}^d$, $\rho$ is a probability measure on $Y$ and $\abs{\bullet}_y$ is a $y$-dependent seminorm for which the integral above is finite for all $v\in\mcal{V}$.
This norm is a generalization of the $L^2\pars{Y, \rho}$- and $H^1_0\pars{Y,\rho}$-norms which are induced by the seminorms $\abs{v}_y^2 = \abs{v\pars{y}}^2$ and $\abs{v}_y^2 = \norm{\nabla v\pars{y}}_2^2$, respectively.

We characterize \revision{any} best approximation \revision{\apprum in $\mcal{M}$} by
\begin{equation}
    \revision{\apprum} \in \argmin_{v\in \mcal{M}} \norm{u - v} . \label{eq:min}
\end{equation}
In general, this \revision{approximation} is not computable.
We propose to approximate \revision{\apprum} by an estimator \revision{\epprum} that is based on the weighted least-squares method which replaces the norm $\norm{v}$ by the empirical seminorm
\begin{equation}\label{eq:emp_norm}
    \norm{v}_n := \pars*{\frac{1}{n} \sum_{i=1}^n w\pars{y_i} \abs{v}_{y_i}^2}^{1/2}
\end{equation}
for a given \emph{weight function} $w$ and a sample set $\braces{y_i}_{i=1}^n\subseteq Y$ with $y_i\sim w^{-1}\rho$.
The weight function is a non-negative function $w \ge 0$ such that $\int_Y w^{-1} \dx{\rho} = 1$.
Any corresponding \emph{empirical} best approximation \revision{\epprum in $\mcal{M}$} is characterized by
\begin{equation}
    \revision{\epprum} \in \argmin_{v\in \mcal{M}} \norm{u - v}_n . \label{eq:emp_min}
\end{equation}

\revision{Given this definition we can choose $w$ such that the theoretical convergence rate of $\norm{u-\epprum} \xrightarrow{n\to\infty} \norm{u-\apprum}$ is maximized.}
Note that changing the sampling measure from $\rho$ to $w^{-1}\rho$ is a common strategy to reduce the variance in Monte Carlo methods referred to as \emph{importance sampling}.

Since $\norm{\bullet}$ is not computable in general, the best approximation error
\begin{equation}
    \norm{\revision{u - \apprum}} = \min_{v\in \mcal{M}} \norm{u - v}
\end{equation}
serves as a baseline for a numerical method founded on a finite set of samples.
We prove in this paper that the empirical best approximation error $\norm{\revision{u - \epprum}}$ is equivalent to this error with high probability.
\begin{main_result}
    \revision{For many model classes $\mcal{M}\subset\mcal{V}$ there exist positive constants $\tilde{K} = \tilde{K}\pars{u, \mcal{M}}$ such that for all $\delta\in\pars{0,1}$
    \begin{equation}
        \norm{\revision{u-\apprum}}
        \le \norm{\revision{u-\epprum}}
        \le \pars*{1 + 2\frac{\sqrt{1+\delta}}{\sqrt{1-\delta}}} \norm{\revision{u-\apprum}}
    \end{equation}
    holds with probability $1 - p$ where $\ln\pars{p} \in \mcal{O}\pars{- n \delta^2 \tilde{K}^{-2}}$.}
\end{main_result}

\revision{This result is a combination of Theorem~\ref{thm:rip} or Corollary~\ref{cor:sample_size_rip} and Theorem~\ref{thm:error_bound}.
Some classical model classes for which it holds are discussed in Section~\ref{sec:examples}.
To prove this result for general nonlinear model classes, we extend the idea of a restricted isometry property (RIP) as known from compressed sensing. %
In contrast to previous specific results for linear spaces~\cite{cohen_migliorati}, sets of sparse functions~\cite{rauhut2016weigtedl1,rauhut2010compressedSensing}, and low-rank tensors~\cite{RAUHUT2017220}, the aim of this paper is to develop first results for a more general theory.
New results for low-rank tensors and a certain class of smooth functions are obtained and it is demonstrated how the theory can guide the choice of the model class $\mcal{M}$.

\revision{Despite the generality of the derived theory we observe many of the same phenomena as more specialised theories namely, the emergence of an optimal sampling measure (cf.~\cite{cohen_migliorati}), the importance of weighted sparsity (cf.~\cite{rauhut2016weigtedl1}) and the advantage of multilevel sampling (cf.~\cite{adcock2017breaking_coherence_barrier}).}
}

\subsection{Structure}

The remainder of the paper is organized as follows.
In Section~\ref{sec:related_work} we aim to provide a brief overview of previous work \revision{and introduce the notion of the restricted isometry property (RIP)}.
Based on the RIP, Section~\ref{sec:main_results} develops the central results of this work.
These are applied to some common model classes in Section~\ref{sec:examples}.
We begin by considering linear spaces in Section~\ref{sec:linear}.
Section~\ref{sec:sparse} considers sets of sparse functions and Section~\ref{sec:cp-tensors} examines sets of low-rank functions.
\revision{Finally, we investigate the influence of the seminorm the convergence in Section~\ref{sec:Hk_seminorm}.}
We conclude in Section~\ref{sec:discussion} with a discussion of the derived results and an outlook on future work.

\subsection{Related work}\label{sec:related_work}

\revision{When $\abs{v}_y =\abs{v\pars{y}}$ is used, \epprum} is known as the nonlinear least squares estimator of $u$.
The extensive interest in machine learning in recent years has lead to the investigation of this estimator for special model classes like sparse vectors~\cite{tao_sparse_coding, eldar2012compressed, rauhut2016weigtedl1}, low-rank tensors~\cite{tao2010matrix_completion, yuan2015tensor_completion, ESTW19, RAUHUT2017220, grasedyck2019SALSA} and neural networks~\cite{grohs_PDE_ERM, kutyniok2019deepPDEs}.
However, to the knowledge of the authors no investigation for general model classes has been published so far.
\revision{
This may be due to the fact that sparse vectors and low-rank tensors were the first model classes for which rigorous theories were developed and that most of these works focus on $\ell^1$ and nuclear norm minimization.
Our work may be regarded as an extension of these works (in particular of infinite-dimensional compressed sensing~\cite{Adcock2017infiniteDimensionalCS,adcock2017breaking_coherence_barrier}) to the nonlinear least-squares setting.
For a more in-depth discussion of statistical learning theory we refer to
the articles~\cite{Vapnik_1982,cucker_smale} and the monographs~\cite{cucker_zhou_2007,Gyoerfi2002}.
For linear spaces the first estimate in Theorem~\ref{thm:error_bound} has already appeared in~\cite{cohen_migliorati} for weighted least squares and in~\cite{migliorati2014polynomial_least_squares,chkifa2015polynomial_least_squares_for_pdes,migliorati2015polynomial_least_squares_with_noise} for standard least squares.
}

\revision{A convergence bound for the nonlinear least squares approximation problem was recently analysed in~\cite{ESTW19}.
However, the probability of the bound failing increases exponentially as the best approximation error $\norm{\pars{1-P} u}$ approaches zero and becomes one when $\norm{\pars{1-P} u}$ vanishes.
Moreover, this bound only holds for model classes that are bounded in $L^\infty$ and it does not provide any insight on what property of the set influences the convergence rate.}

The empirical approximation problem~\eqref{eq:emp_min} was thoroughly examined in~\cite{cohen_migliorati} for linear model spaces.
There the model class $\mcal{M}$ is assumed to be the $m$-dimensional subspace spanned by the \emph{orthonormal} basis functions $\braces{\vec{B}_j}_{j\in\bracs{m}}$ in $\mcal{V} = L^2\pars{Y, \rho}$.
\revision{A key point in this work is that the error $\norm{u - \epprum}$ can be bounded by $\norm{u - \apprum}$ if $\norm{\vec{G} - \vec{I}_m}_2 \le \delta < 1$}
where
\revision{
\begin{align*}
    \vec{G}
    &:= \frac{1}{n}\sum_{i=1}^n w\pars{y_i} \vec{B}\pars{y_i} \vec{B}\pars{y_i}^\intercal \\
    &= \frac{1}{n}\sum_{i=1}^n w\pars{y_i} \bracs{\vec{B}_1\pars{y_i}\ \ldots\ \vec{B}_m\pars{y_i}}^\intercal \bracs{\vec{B}_1\pars{y_i}\ \ldots\ \vec{B}_m\pars{y_i}}   
\end{align*}
}
is the Monte Carlo estimate of  the Gram matrix $\vec{I}_m$.
This condition is in fact equivalent to the \revision{norm equivalence}  %
\begin{equation}\label{eq:cm_rip}
    \pars{1-\delta} \norm{u}^2 \le \norm{u}_n^2 \le \pars{1+\delta} \norm{u}^2 \qquad \text{for } u \in \revision{\mcal{M}} .  %
\end{equation}
\citet{cohen_migliorati} prove that under suitable conditions the  %
\revision{norm equivalence}~\eqref{eq:cm_rip} is satisfied with high probability.
\begin{theorem}\label{thm:cohen_migliorati}
    If $\revision{\tilde{K}} \coloneqq \esssup_{y\in Y} w\pars{y} \boldsymbol{B}\pars{y}^\intercal \boldsymbol{B}\pars{y} < \infty$ then
    \begin{align}
        \mbb{P}\bracs{\norm{\vec{G}-\vec{I}_m}_{2} > \delta} \le 2 m \exp\pars*{-\frac{c_\delta n}{\revision{\tilde K}}},
    \end{align}
    with \revision{$c_\delta \coloneqq -\delta + \pars{1+\delta}\ln\pars{1+\delta}$}.
\end{theorem}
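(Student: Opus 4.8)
The plan is to recognize $\vec{G}$ as an empirical average of independent, identically distributed, positive semidefinite rank-one matrices and then to invoke a matrix Chernoff bound. First I would write
\[
    \vec{G} = \frac{1}{n}\sum_{i=1}^n \vec{X}_i,
    \qquad
    \vec{X}_i := w\pars{y_i}\vec{B}\pars{y_i}\vec{B}\pars{y_i}^\intercal .
\]
Since $y_i \sim w^{-1}\rho$ and $\braces{\vec{B}_j}_{j\in\bracs{m}}$ is orthonormal in $\mcal{V} = L^2\pars{Y,\rho}$, a change of measure gives $\mbb{E}\bracs{\vec{X}_i} = \int_Y \vec{B}\vec{B}^\intercal\dx{\rho} = \vec{I}_m$, hence $\mbb{E}\bracs{\vec{G}} = \vec{I}_m$. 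Each $\vec{X}_i$ has rank one, so its only nonzero eigenvalue is $w\pars{y_i}\vec{B}\pars{y_i}^\intercal\vec{B}\pars{y_i}$; the hypothesis therefore forces $\lambda_{\max}\pars{\vec{X}_i} \le \tilde{K}$ almost surely, which is precisely the uniform boundedness needed for a Chernoff-type argument.

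Next I would reduce the spectral-norm deviation to two one-sided eigenvalue tail events. As $\vec{G}-\vec{I}_m$ is symmetric, $\norm{\vec{G}-\vec{I}_m}_2 = \max\braces{\lambda_{\max}\pars{\vec{G}}-1,\,1-\lambda_{\min}\pars{\vec{G}}}$, so a union bound yields
\[
    \mbb{P}\bracs{\norm{\vec{G}-\vec{I}_m}_2 > \delta}
    \le \mbb{P}\bracs{\lambda_{\max}\pars{\vec{G}} > 1+\delta}
    + \mbb{P}\bracs{\lambda_{\min}\pars{\vec{G}} < 1-\delta} .
\]

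I would then bound each tail with the matrix Chernoff inequality applied to $n\vec{G} = \sum_{i=1}^n \vec{X}_i$, whose mean $n\vec{I}_m$ satisfies $\mu_{\min} = \mu_{\max} = n$ and whose summands obey $\lambda_{\max}\pars{\vec{X}_i} \le L := \tilde{K}$. The upper tail gives
\[
    \mbb{P}\bracs{\lambda_{\max}\pars{\vec{G}} \ge 1+\delta}
    \le m\,\bracs*{\frac{e^{\delta}}{\pars{1+\delta}^{1+\delta}}}^{n/\tilde{K}}
    = m\exp\pars*{-\frac{c_\delta n}{\tilde{K}}},
\]
with exactly $c_\delta = -\delta + \pars{1+\delta}\ln\pars{1+\delta}$. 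The lower tail produces the analogous estimate with constant $c_\delta^- := \delta + \pars{1-\delta}\ln\pars{1-\delta}$; since $\frac{\dd}{\dd\delta}\pars{c_\delta^- - c_\delta} = -\ln\pars{1-\delta^2} \ge 0$ and the difference vanishes at $\delta = 0$, we have $c_\delta^- \ge c_\delta$ on $\pars{0,1}$, so the lower tail is dominated and may also be bounded by $m\exp\pars*{-c_\delta n/\tilde{K}}$. Adding the two contributions produces the prefactor $2m$ and finishes the proof.

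The genuine work sits in the matrix Chernoff step. If one cannot cite it as a black box, the hard part will be redeveloping the matrix Laplace-transform method: bounding $\mbb{E}\operatorname{tr}\exp\pars{\theta\sum_i \vec{X}_i}$ through subadditivity of matrix cumulant generating functions (which rests on Lieb's concavity theorem, or alternatively the Golden--Thompson inequality) and then optimizing over $\theta > 0$ to extract the rate $c_\delta$. By contrast, the surrounding bookkeeping—identifying the mean, bounding the summands, passing to eigenvalue tails, and the elementary comparison $c_\delta^- \ge c_\delta$—is routine.
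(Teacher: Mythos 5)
Your proof is correct, and it takes the same route as the source: the paper states this theorem without proof, citing \cite{cohen_migliorati}, whose argument is exactly the matrix Chernoff bound of \cite{user_friendly_tail_bounds} applied to the i.i.d.\ rank-one summands $w\pars{y_i}\vec{B}\pars{y_i}\vec{B}\pars{y_i}^\intercal$ with mean $\vec{I}_m$ and uniform eigenvalue bound $\tilde{K}$. Your bookkeeping (the two-sided reduction to eigenvalue tails, and the observation that the lower-tail rate $\delta+\pars{1-\delta}\ln\pars{1-\delta}$ dominates $c_\delta$, so both tails can be absorbed into the prefactor $2m$ with the stated constant) matches the standard derivation.
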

\revision{Equation~\eqref{eq:cm_rip} can be seen as a generalized \emph{restricted isometry property}.}
The notion of a RIP was introduced in the context of compressed sensing~\cite{tao_sparse_coding}.
It expresses the well-posedness of the problem by ensuring that $\norm{\bullet}_n$ is indeed a norm \revision{and equivalent to $\norm{\bullet}$ on $\mcal{M}$. Minimizing the error w.r.t.\ $\norm{\bullet}_n$ thus minimizes the error w.r.t.\ $\norm{\bullet}$.}
In compressed sensing of sparse vectors~\cite{tao_sparse_coding, eldar2012compressed} and low-rank tensors~\cite{RAUHUT2017220} discrete analogues of~\eqref{eq:cm_rip} are employed to derive bounds for the corresponding reconstruction errors.
A recent work which generalizes the RIP from~\cite{cohen_migliorati} to sparse grid spaces is~\cite{bohn2018convergence}.

In this paper we extend the cited results to more general norms and nonlinear model sets by directly bounding the probability of
\begin{equation}\label{eq:rip}
    \rip{A}{\delta} :\Leftrightarrow \pars{1-\delta} \norm{u}^2 \le \norm{u}_n^2 \le \pars{1+\delta}\norm{u}^2 \qquad
    \forall u\in A\subseteq\mathcal V.
\end{equation}
We prove that \revision{under some conditions on $n$ and $A$} this RIP holds with high probability
\revision{and show that these conditions are satisfied for a variety of model classes.}
We then use the RIP to provide quasi-optimality guarantees for the empirical best approximation in Theorem~\ref{thm:error_bound}.

\revision{In Remark~\ref{rmk:cones_suffice} we note that
it suffices to consider conic model sets.}
Optimizing over these sets is not straightforward.
In~\cite{traonmilin2018oneRIPtoRule}, appropriate RIP constants for exact recovery of conic model sets using a suitable regularizer are derived.

\section{Main Result}
\label{sec:main_results}

To measure the rate of convergence with which $\norm{v}_n$ approaches $\norm{v}$ as $n$ tends to $\infty$, we introduce the \emph{variation constant}
\begin{equation}
\label{eq:KAbw}
    K\pars{A}
    := \sup_{u\in A} \norm{u}_{w,\infty}^2 
    \qquad\text{with}\qquad
    \norm{v}_{w,\infty}^2 := \esssup_{y\in Y} w\pars{y} \abs{v}_y^2 .
\end{equation}
This constant constitutes a uniform upper bound of $\norm{v}_n$ for all realizations of the empirical norm $\norm{\bullet}_n$ and all $v\in A$.
We usually omit the dependence on the choice of $w$, $\abs{\bullet}_y$ and $Y$.
When a distinction between different choices of these parameters is necessary we add subscripts to $K$, respectively.

The constant $K$ is a fundamental parameter in many concentration inequalities that are used to provide bounds for the rate of convergence of the \emph{quadrature error}.
\begin{definition}[Quadrature Error]
    The \emph{quadrature error} of the empirical norm $\norm{\bullet}_n^2$
    on the model set $A\subseteq \mcal{V}$ is defined by
    \begin{equation}
    \label{eq:quadrature error}
        \mcal{E}_{A} := \sup_{u\in A} \abs{\norm{u}^2 - \norm{u}_n^2}.
    \end{equation}
\end{definition}
This error is closely related to the RIP \revision{through the}
\emph{normalization operator} $U$.
\revision{This relation is developed rigorously in the subsequent lemma.}

\revision{
\begin{definition}[Normalization Operator]
    The \emph{normalization operator}
    acts on a set $A$ by
    \begin{equation}
        U\pars{A}
        := \braces*{\tfrac{u}{\norm{u}} : u\in A\!\setminus\!\braces{0}} .
    \end{equation}
\end{definition}}
\begin{lemma}[\revision{Equivalence of RIP and a bounded quadrature error}]
\label{lem:rip_iff_gen_err_bdd}
For some set $A$,
    \begin{equation}
        \rip{A}{\delta} \Leftrightarrow \mcal{E}_{U\pars{A}} \le \delta\quad\text{ for } \delta>0.
    \end{equation}
\end{lemma}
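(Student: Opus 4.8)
The plan is to exploit the degree-two homogeneity of both $\norm{\bullet}^2$ and $\norm{\bullet}_n^2$ in order to transfer the \emph{multiplicative} bound defining $\rip{A}{\delta}$ into the \emph{additive} quadrature bound on the normalized set $U\pars{A}$. First I would rewrite the two-sided inequality $\pars{1-\delta}\norm{u}^2 \le \norm{u}_n^2 \le \pars{1+\delta}\norm{u}^2$ as the single absolute bound
\begin{equation*}
    \abs{\norm{u}^2 - \norm{u}_n^2} \le \delta \norm{u}^2 \qquad \forall u \in A,
\end{equation*}
which is immediate since the original pair is equivalent to $-\delta\norm{u}^2 \le \norm{u}_n^2 - \norm{u}^2 \le \delta\norm{u}^2$.

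Next I would dispose of the element $u = 0$. The displayed inequality holds trivially there, as both sides vanish, so $\rip{A}{\delta}$ is equivalent to the same bound quantified over $A\setminus\braces{0}$, which is precisely the index set over which $U\pars{A}$ is formed.

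The central step is the normalization itself. For $u \in A\setminus\braces{0}$ put $v \coloneqq u/\norm{u} \in U\pars{A}$, so that $\norm{v} = 1$. Since $\abs{\bullet}_y$ is a seminorm, the empirical seminorm is absolutely homogeneous, giving $\norm{v}_n^2 = \norm{u}_n^2/\norm{u}^2$; hence
\begin{equation*}
    \abs{\norm{v}^2 - \norm{v}_n^2}
    = \abs*{1 - \frac{\norm{u}_n^2}{\norm{u}^2}}
    = \frac{\abs{\norm{u}^2 - \norm{u}_n^2}}{\norm{u}^2}.
\end{equation*}
Taking the supremum over $u \in A\setminus\braces{0}$ identifies $\mcal{E}_{U\pars{A}}$ with $\sup_{u\in A\setminus\braces{0}} \norm{u}^{-2}\abs{\norm{u}^2 - \norm{u}_n^2}$, so the condition $\mcal{E}_{U\pars{A}} \le \delta$ is exactly the restricted bound from the previous paragraph. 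Chaining these reversible equivalences closes the argument in both directions.

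Since every step is a genuine \emph{if and only if}, there is no real obstacle to surmount; the sole point demanding care is reconciling the explicit exclusion of the origin in the definition of $U\pars{A}$ with the fact that $\rip{A}{\delta}$ ranges over all of $A$. I expect this bookkeeping to be the only subtlety, and it is settled by the triviality of the RIP inequality at $u = 0$.
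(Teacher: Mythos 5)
Your proof is correct and follows essentially the same route as the paper's: both arguments combine the homogeneity $\norm{\alpha u}_n = \abs{\alpha}\norm{u}_n$ with division by $\norm{u}^2$ to pass from the multiplicative RIP bound on $A$ to the additive quadrature bound on $U\pars{A}$, with the origin excluded trivially. Your explicit attention to the $u=0$ bookkeeping is a slightly more careful rendering of what the paper handles via the remark $\norm{0}_n = \norm{0}$, but the substance is identical.
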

\begin{proof}
    Note that \revision{$\norm{0}_n = \norm{0}$,} $\norm{\alpha u}_n = \abs{\alpha}\norm{u}_n$ for all $u\in A$ and $\norm{u} = 1$ for all $u\in U\pars{A}$.
    Therefore,
    \begin{equation}
        \begin{array}{lrcccll}
            & \pars{1-\delta} \norm{u}^2 & \le & \norm{u}_n^2 & \le & \pars{1+\delta}\norm{u}^2 & \forall u\in A \\
            \Leftrightarrow & \pars{1-\delta} & \le & \norm*{\frac{u}{\norm{u}}}_n^2 & \le & \pars{1+\delta} & \forall u\in A\revision{\setminus\braces{0}} \\
            \Leftrightarrow & -\delta & \le & \norm{u}_n^2 - \norm{u}^2 & \le & \delta & \forall u\in U\pars{A},
        \end{array}
    \end{equation}
    which \revision{is equivalent to} $\sup_{u\in U\pars{A}} \abs{\norm{u}^2 - \norm{u}_n^2} \le \delta$.
\end{proof}

\revision{\begin{remark} \label{rmk:cones_suffice}
    By the preceding lemma
    \begin{equation}
        \rip{A}{\delta} \Leftrightarrow \rip{\operatorname{Cone}\pars{A}}{\delta}
    \end{equation}
    where $\operatorname{Cone}\pars{A} := \braces{\alpha a : a\in A, \alpha>0}$ denotes the cone generated by $A$.
    This implies that our theory also holds for unbounded sets $A$.
\end{remark}}

We introduce the notion of a covering number to provide a well-known bound for the quadrature error in the following.

\begin{definition}[Covering Number]
    The covering number $\nu_{\norm{\bullet}}(A, \varepsilon)$ of a subset $A\subseteq\mcal{V}$ is the minimal number of $\norm{\bullet}$-open balls of radius $\varepsilon$ needed to cover $A$.
\end{definition}

\begin{lemma}\label{lem:generalization_error}
    Let $A\subseteq \mcal{V}$ and $\abs{\bullet}_y$ be such that\ $K=K\pars{U\pars{A}} < \infty$.
    Then,
    \begin{equation}
        \mbb{P}\bracs{\mcal{E}_{U(A)} \ge \delta} \le 2 \nu_{\norm{\bullet}_{w,\infty}}\pars*{U\pars{A}, \tfrac{1}{8}\tfrac{\delta}{\sqrt{K}}} \exp\pars*{-\tfrac{n}{2} \pars*{\tfrac{\delta}{K}}^2}\quad\text{for } \delta > 0.
    \end{equation}
\end{lemma}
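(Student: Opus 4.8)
The plan is to combine a pointwise concentration inequality with a covering (net) argument over $U\pars{A}$. First I would record that $\norm{\bullet}_n^2$ is an unbiased estimator of $\norm{\bullet}^2$: for fixed $u$, setting $X_i \coloneqq w\pars{y_i}\abs{u}_{y_i}^2$ and using $y_i\sim w^{-1}\rho$ gives $\mbb{E}\bracs{X_i} = \int_Y w\pars{y}\abs{u}_y^2\, w^{-1}\pars{y}\dx{\rho}\pars{y} = \norm{u}^2$, so that $\norm{u}_n^2 = \tfrac1n\sum_{i=1}^n X_i$ has mean $\norm{u}^2$. For $u\in U\pars{A}$ one has $\norm{u}=1$ and $0\le X_i \le \norm{u}_{w,\infty}^2 \le K$, hence the $X_i$ are i.i.d.\ and supported in $\bracs{0,K}$.

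Next I would establish pointwise concentration. Since each $X_i$ lies in an interval of length $K$, Hoeffding's inequality yields, for every fixed $u\in U\pars{A}$ and $t>0$,
\[
\mbb{P}\bracs*{\abs*{\norm{u}^2 - \norm{u}_n^2} \ge t} \le 2\exp\pars*{-\tfrac{2nt^2}{K^2}}.
\]
Choosing $t = \tfrac{\delta}{2}$ produces exactly the factor $2\exp\pars*{-\tfrac{n}{2}\pars*{\tfrac{\delta}{K}}^2}$ that appears in the claim.

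The principal obstacle is to upgrade this pointwise estimate to a bound that holds uniformly over $U\pars{A}$. To this end I would show that $g\colon u\mapsto \norm{u}^2 - \norm{u}_n^2$ is Lipschitz on $U\pars{A}$ with respect to $\norm{\bullet}_{w,\infty}$ with constant $4\sqrt{K}$, uniformly over all sample realizations. The two ingredients are the reverse triangle inequality for the seminorms, $\abs[\big]{\abs{u}_y - \abs{u'}_y}\le \abs{u-u'}_y$, and the factorization $\abs{u}_y^2 - \abs{u'}_y^2 = \pars{\abs{u}_y - \abs{u'}_y}\pars{\abs{u}_y + \abs{u'}_y}$. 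For the exact term I would use Cauchy--Schwarz together with the domination $\norm{\bullet}\le\norm{\bullet}_{w,\infty}$ (a consequence of $\int_Y w^{-1}\dx{\rho}=1$) to bound $\abs{\norm{u}^2 - \norm{u'}^2}\le 2\norm{u-u'}_{w,\infty}$; for the empirical term I would split $w\pars{y_i} = w\pars{y_i}^{1/2}\cdot w\pars{y_i}^{1/2}$ and use $w\pars{y_i}^{1/2}\abs{u}_{y_i}\le\norm{u}_{w,\infty}\le\sqrt{K}$ to obtain $\abs{\norm{u}_n^2 - \norm{u'}_n^2}\le 2\sqrt{K}\norm{u-u'}_{w,\infty}$. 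Since $K\ge 1$ on $U\pars{A}$, adding these two bounds yields the Lipschitz constant $4\sqrt{K}$, which is what fixes the covering radius $\tfrac18\tfrac{\delta}{\sqrt K}$.

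Finally I would assemble the estimates. Let $u_1,\dots,u_N\in U\pars{A}$ be the centers of a minimal covering of $U\pars{A}$ by $\norm{\bullet}_{w,\infty}$-balls of radius $\eps\coloneqq\tfrac18\tfrac{\delta}{\sqrt{K}}$, so that $N = \nu_{\norm{\bullet}_{w,\infty}}\pars{U\pars{A},\eps}$. For $u\in U\pars{A}$ within distance $\eps$ of its center $u_j$, the Lipschitz bound gives $\abs{g\pars{u}}\le\abs{g\pars{u_j}} + 4\sqrt{K}\,\eps = \abs{g\pars{u_j}} + \tfrac{\delta}{2}$; hence $\mcal{E}_{U\pars{A}} = \sup_u\abs{g\pars{u}}\ge\delta$ forces $\max_j\abs{g\pars{u_j}}\ge\tfrac{\delta}{2}$. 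A union bound over the $N$ centers combined with the pointwise estimate at $t=\tfrac{\delta}{2}$ then yields
\[
\mbb{P}\bracs*{\mcal{E}_{U\pars{A}}\ge\delta} \le N\cdot 2\exp\pars*{-\tfrac{n}{2}\pars*{\tfrac{\delta}{K}}^2},
\]
which is precisely the asserted bound. The only subtlety worth flagging is that the covering centers must be taken inside $U\pars{A}$, so that the pointwise concentration---which relies on $\norm{u_j}=1$ and $\norm{u_j}_{w,\infty}^2\le K$---applies at each $u_j$.
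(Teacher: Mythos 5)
Your proposal is correct and follows essentially the same route as the paper's proof in Appendix~A: pointwise Hoeffding concentration at level $\tfrac{\delta}{2}$ (variables $w\pars{y_i}\abs{u}_{y_i}^2$ in $\bracs{0,K}$), a $\norm{\bullet}_{w,\infty}$-Lipschitz estimate for $u\mapsto\norm{u}^2-\norm{u}_n^2$ to pass to a net of radius $\tfrac{1}{8}\tfrac{\delta}{\sqrt{K}}$, and a union bound over the covering centers (which the paper, like you, implicitly takes inside $U\pars{A}$). The only cosmetic difference is how the Lipschitz constant $4\sqrt{K}$ is assembled: the paper applies $\abs{\ell_y\pars{u}-\ell_y\pars{v}}\le 2\sqrt{K}\norm{u-v}_{w,\infty}$ to both the exact and the empirical term ($2\sqrt{K}+2\sqrt{K}$), whereas you bound the exact term by $2\norm{u-u'}_{w,\infty}$ and absorb it using $K\ge 1$ on $U\pars{A}$ --- both are valid and yield the identical bound.
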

The proof of this lemma can be found in Appendix~\ref{app:lem:generalization_error}.
With the preceding preparations we can derive a central result:
\begin{theorem}\label{thm:rip}
    Let $A\subseteq \mcal{V}$ and $\abs{\bullet}_y$ be such that\ $K=K\pars{U\pars{A}} < \infty$.
    Then,
    \begin{equation}
        \mbb{P}\bracs{\rip{A}{\delta}} \ge 1 - 2 \nu_{\norm{\bullet}_{w,\infty}}\pars*{U\pars{A}, \tfrac{1}{4}\tfrac{\delta}{\sqrt{K}}} \exp\pars*{-\tfrac{n}{2} \pars*{\tfrac{\delta}{K}}^2}\quad\text{for } \delta > 0.
    \end{equation}
\end{theorem}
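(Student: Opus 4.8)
The plan is to reduce the probabilistic RIP statement to a one-sided tail bound for the quadrature error and then to control that tail by a covering argument. By Lemma~\ref{lem:rip_iff_gen_err_bdd} the event $\rip{A}{\delta}$ coincides with $\braces{\mcal{E}_{U\pars{A}} \le \delta}$, so passing to the complement gives
\[
    \mbb{P}\bracs{\rip{A}{\delta}} = 1 - \mbb{P}\bracs{\mcal{E}_{U\pars{A}} > \delta}.
\]
It therefore suffices to bound $\mbb{P}\bracs{\mcal{E}_{U\pars{A}} > \delta}$ by $2\nu_{\norm{\bullet}_{w,\infty}}\pars*{U\pars{A}, \tfrac14\tfrac{\delta}{\sqrt{K}}}\exp\pars*{-\tfrac{n}{2}\pars*{\tfrac{\delta}{K}}^2}$. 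This is in the spirit of Lemma~\ref{lem:generalization_error}, but with the sharper covering radius $\tfrac14\tfrac{\delta}{\sqrt{K}}$ in place of $\tfrac18\tfrac{\delta}{\sqrt{K}}$; I would obtain it by re-running the covering argument while exploiting that every element of $U\pars{A}$ is normalized.

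Concretely, I would fix a minimal $\gamma$-net $\braces{u_j}$ of $U\pars{A}$ with respect to $\norm{\bullet}_{w,\infty}$, where $\gamma := \tfrac14\tfrac{\delta}{\sqrt{K}}$, so that its cardinality is $\nu_{\norm{\bullet}_{w,\infty}}\pars*{U\pars{A}, \gamma}$. For $u\in U\pars{A}$ with nearest net point $u_j$, the triangle inequality splits $\abs{\norm{u}_n^2 - \norm{u}^2}$ into the net-point deviation $\abs{\norm{u_j}_n^2 - \norm{u_j}^2}$, the empirical oscillation $\abs{\norm{u}_n^2 - \norm{u_j}_n^2}$, and the exact oscillation $\abs{\norm{u}^2 - \norm{u_j}^2}$. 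The last term vanishes because $\norm{u} = \norm{u_j} = 1$ on $U\pars{A}$, which is precisely where the factor-of-two improvement originates. The empirical oscillation is controlled deterministically via $\abs{\norm{u}_n^2 - \norm{u_j}_n^2} \le \norm{u - u_j}_n\pars*{\norm{u}_n + \norm{u_j}_n} \le 2\gamma\sqrt{K} = \tfrac{\delta}{2}$, using $\norm{\bullet}_n \le \norm{\bullet}_{w,\infty}$ together with the almost-sure bound $\norm{\bullet}_n \le \sqrt{K}$ on $U\pars{A}$. Hence $\braces{\mcal{E}_{U\pars{A}} > \delta} \subseteq \braces*{\max_j \abs{\norm{u_j}_n^2 - 1} > \tfrac{\delta}{2}}$.

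It then remains to bound each net-point deviation and to take a union bound. For fixed $u_j$ the empirical norm $\norm{u_j}_n^2 = \tfrac1n\sum_i w\pars{y_i}\abs{u_j}_{y_i}^2$ is an average of i.i.d.\ random variables lying in $\bracs{0,K}$ with mean $\norm{u_j}^2 = 1$, so Hoeffding's inequality gives $\mbb{P}\bracs*{\abs{\norm{u_j}_n^2 - 1} > \tfrac{\delta}{2}} \le 2\exp\pars*{-\tfrac{n}{2}\pars*{\tfrac{\delta}{K}}^2}$. Summing over the $\nu_{\norm{\bullet}_{w,\infty}}\pars*{U\pars{A}, \gamma}$ net points yields the claimed estimate. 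The main obstacle is the bookkeeping in the covering step: the resolution $\gamma$ must be chosen so that the deterministic oscillation consumes exactly half of the budget $\delta$, leaving the other half for the concentration term, and one must check that the bounds $\norm{\bullet}_n \le \sqrt{K}$ and the normalization $\norm{\bullet} = 1$ together make the exact-oscillation term disappear. Arranging the Hoeffding range $\bracs{0,K}$ and the deviation $\tfrac{\delta}{2}$ so as to reproduce precisely the exponent $-\tfrac{n}{2}\pars*{\tfrac{\delta}{K}}^2$ is the other point requiring care.
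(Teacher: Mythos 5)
Your proof is correct, and its skeleton---reduction via Lemma~\ref{lem:rip_iff_gen_err_bdd}, a covering of $U\pars{A}$, Hoeffding's inequality at each net point, and a union bound---is the same as the paper's. The genuinely different (and valuable) step is how you split the budget $\delta$. The paper's proof simply cites Lemma~\ref{lem:generalization_error}, whose covering argument (Lemma~\ref{lem:sup-union-bound} in the appendix) spends $\tfrac{\delta}{4}$ on the exact oscillation $\abs{\norm{u}^2-\norm{u_j}^2}$ and $\tfrac{\delta}{4}$ on the empirical oscillation, which forces the radius $\tfrac{1}{8}\tfrac{\delta}{\sqrt{K}}$; read strictly, that chain only yields the theorem with $\nu_{\norm{\bullet}_{w,\infty}}\pars{U\pars{A},\tfrac{1}{8}\tfrac{\delta}{\sqrt{K}}}$, a weaker bound than the one stated, since covering numbers decrease as the radius grows. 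You instead observe that on the normalized set the exact oscillation vanishes identically because $\norm{u}=\norm{u_j}=1$, so the full deterministic budget $\tfrac{\delta}{2}$ can be spent on the empirical oscillation, and the coarser radius $\tfrac{1}{4}\tfrac{\delta}{\sqrt{K}}$ appearing in the theorem's statement suffices. In other words, your rerun of the covering argument is precisely what justifies the constant $\tfrac{1}{4}$, which the paper's own two-lemma derivation does not literally deliver. Your supporting estimates are all sound: $\norm{u-u_j}_n \le \norm{u-u_j}_{w,\infty}$, the almost-sure bound $\norm{u}_n,\norm{u_j}_n \le \sqrt{K}$ on $U\pars{A}$, the identity $\mbb{E}\bracs{w\pars{y}\abs{u_j}_y^2} = \norm{u_j}^2 = 1$ under $y\sim w^{-1}\rho$, and Hoeffding with range $\bracs{0,K}$ and deviation $\tfrac{\delta}{2}$ giving exactly $2\exp\pars{-\tfrac{n}{2}\pars{\tfrac{\delta}{K}}^2}$. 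The one convention to flag---shared with the paper's appendix, so not a gap relative to it---is that your net points must lie in $U\pars{A}$ for the normalization and boundedness to apply; under the paper's definition of the covering number, where ball centers are unconstrained, one would either move the centers into the set at the cost of doubling the radius (recovering $\tfrac{1}{8}$) or state the result for interior coverings.
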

\begin{proof}
    By Lemma~\ref{lem:rip_iff_gen_err_bdd} it suffices to bound the quadrature error on $U\pars{A}$.
    Lemma~\ref{lem:generalization_error} provides a bound for the probability of the \revision{complementary} event.
\end{proof}

\revision{\begin{remark}
    The variation constant $K\pars{U\pars{A}}$ can be seen as a generalization of the embedding constant $\pars{A, \norm{\bullet}} \hookrightarrow \pars{A, \norm{\bullet}_{w,\infty}}$ to nonlinear sets and therefore as an analog of $\revision{\tilde K}$ in Theorem~\ref{thm:cohen_migliorati}.
\end{remark}}

\revision{\begin{example}[$K$ is independent of the dimension]
    If $\mcal{M}$ is a manifold then one might expect the probability of \rip{\mcal{M}}{\delta} to depend on its dimension.
    But counter-examples can be constructed easily.
    Consider $\mcal{V} = L^2\pars{\bracs{-1,1}, \tfrac{\dx{x}}{2}}$ with the weight function $w\equiv 1$ and let $P_k$ denote the $k$-th Legendre polynomial.
    Let moreover $n\in\mbb{N}$ and $m \ge \tfrac{n^2}{2} + n - \tfrac{1}{2}$.
    Then the $1$-dimensional manifold $\spn\braces{P_m}$ has a larger variation constant than the $n$-dimensional manifold $\spn\braces{P_k}_{k\in\bracs{n}}$.
    We refer to Example~\ref{sec:linear} for the computation of these variation constants.
\end{example}}

\begin{corollary}[Sample Complexity]\label{cor:sample_size_rip}
    Let $\revision{c,C,}M>0$ and $A\subseteq\mcal{V}$ be a set with 
    $\nu_{\norm{\bullet}_{w,\infty}}\pars{U\pars{A}, r} \le \revision{C \pars{cr}^{-M}}$.
    Under the assumptions of Theorem~\ref{thm:rip} with $K=K\pars{U\pars{A}}$, at \revision{most
    \begin{align}
        n \le 2\pars*{M \ln\pars*{\frac{4\sqrt{K}}{c\delta}} - \ln\pars*{\frac{p}{2C}}} \pars*{\frac{K}{\delta}}^2
    \end{align}
    many samples are required to satisfy~\rip{A}{\delta} with probability $1-p$.}
\end{corollary}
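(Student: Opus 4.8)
The plan is to read the statement as a \emph{sufficiency} claim: any $n$ at least as large as the displayed right-hand side forces the failure probability of \rip{A}{\delta} below $p$. The starting point is the tail bound of Theorem~\ref{thm:rip}, which states that \rip{A}{\delta} fails with probability at most
\[
    2\,\nu_{\norm{\bullet}_{w,\infty}}\pars*{U\pars{A}, \tfrac{1}{4}\tfrac{\delta}{\sqrt{K}}}\exp\pars*{-\tfrac{n}{2}\pars*{\tfrac{\delta}{K}}^2}.
\]
It therefore suffices to choose $n$ so that this quantity does not exceed $p$, and the entire argument consists in solving that scalar inequality for $n$.

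First I would eliminate the covering number using the polynomial decay hypothesis. Evaluating $\nu_{\norm{\bullet}_{w,\infty}}(U(A), r) \le C\pars{cr}^{-M}$ at the radius $r = \tfrac{1}{4}\tfrac{\delta}{\sqrt{K}}$ appearing in Theorem~\ref{thm:rip} gives
\[
    \nu_{\norm{\bullet}_{w,\infty}}\pars*{U\pars{A}, \tfrac{1}{4}\tfrac{\delta}{\sqrt{K}}}
    \le C\pars*{\frac{c\delta}{4\sqrt{K}}}^{-M}
    = C\pars*{\frac{4\sqrt{K}}{c\delta}}^{M},
\]
so it is enough to guarantee the purely scalar inequality
\[
    2C\pars*{\frac{4\sqrt{K}}{c\delta}}^{M}\exp\pars*{-\tfrac{n}{2}\pars*{\tfrac{\delta}{K}}^2} \le p.
\]

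The second step is to take logarithms, which is valid since both sides are positive and $\exp$ is monotone. Rearranging the resulting affine inequality to isolate $n$ produces
\[
    \tfrac{n}{2}\pars*{\tfrac{\delta}{K}}^2
    \ge M\ln\pars*{\frac{4\sqrt{K}}{c\delta}} - \ln\pars*{\frac{p}{2C}},
\]
where I use $\ln\pars{2C/p} = -\ln\pars{p/2C}$; multiplying through by $2\pars{K/\delta}^2$ then recovers exactly the stated bound.

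There is no genuine analytic obstacle here: the argument is a direct algebraic manipulation of Theorem~\ref{thm:rip}, with the covering-number hypothesis absorbed into the logarithmic factor. The only points that require attention are bookkeeping ones — tracking the sign when moving $\ln p$ across the inequality, and being explicit that the conclusion is a sufficient (upper) bound on the required sample size rather than a characterization of the minimal admissible $n$.
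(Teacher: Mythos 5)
Your proposal is correct and follows essentially the same route as the paper's own proof: substitute the covering-number hypothesis at the radius $\tfrac{1}{4}\tfrac{\delta}{\sqrt{K}}$ into the tail bound of Theorem~\ref{thm:rip}, take logarithms, and solve the resulting affine inequality for $n$, with identical algebra (including the sign bookkeeping $\ln\pars{2C/p} = -\ln\pars{p/2C}$). The only difference is presentational: you state the sufficiency direction $n \ge 2\pars{M\ln\pars{4\sqrt{K}/\pars{c\delta}} - \ln\pars{p/2C}}\pars{K/\delta}^2$ directly, whereas the paper equivalently characterizes the regime of insufficient $n$; both yield the claimed upper bound on the required sample size.
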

\begin{proof}
    To obtain~\rip{U\pars{A}}{\delta} with a probability of $1 - p$ it suffices that
    \revision{
    \begin{align}
        p
        &\le 2 \nu_{\norm{\bullet}_{w,\infty}}\pars*{U\pars{A}, \tfrac{1}{4}\tfrac{\delta}{\sqrt{K}}} \exp\pars*{-\tfrac{n}{2}\pars*{\tfrac{\delta}{K}}^2} \\
        &\le \exp\pars*{\ln\pars{2C} - M\ln\pars*{\tfrac{c\delta}{4\sqrt{K}}} -\tfrac{n}{2}\pars*{\tfrac{\delta}{K}}^2} .
    \end{align}
    Equivalently,
    \begin{align}
        \ln\pars{p} &\le \ln\pars{2C} - M \ln\pars*{\tfrac{c\delta}{4\sqrt{K}}} - \tfrac{n}{2}\pars*{\tfrac{\delta}{K}}^2 \\
        \Leftrightarrow\qquad n &\le 2\pars{M\ln\pars*{\tfrac{4\sqrt{K}}{c\delta}} - \ln\pars{\tfrac{p}{2C}}} \pars*{\tfrac{K}{\delta}}^2 . \qedhere %
    \end{align}
    }
\end{proof}

Linear spaces, sparse vectors and low-rank tensors all satisfy the requirements of this corollary with $M$ depending linearly on the number of parameters of the model~\cite{vershynin2009sparsity, RAUHUT2017220, grohs_PDE_ERM}.
The corollary states that in these cases $n\in\mcal{O}\pars{MG}$ \revision{where the factor $G := \ln\pars{K}K^2$ represents the variation of $\norm{\bullet}_n$ on $\mcal{M}$.}
\revision{If $K$ is independent of $M$ this means that $n$ depends only linearly on $M$.}

\begin{remark}\label{rmk:K>nu}
    \revision{An interpretation of Corollary~\ref{cor:sample_size_rip} is} that the variation constant $K$ is of greater importance than the covering number $\nu$ which enters the bound on the sample complexity only logarithmically.
\end{remark}

\begin{theorem}[Empirical Projection Error]\label{thm:error_bound}
    Assume that \rip{\revision{\braces{\apprum}}-\mcal{M}}{\delta} holds.
    Then
    \begin{equation} \label{eq:error_bound_1}
        \norm{\revision{\apprum-\epprum}} \le 2\frac{1}{\sqrt{1-\delta}} \norm{\revision{u-\apprum}}_{w,\infty} .
    \end{equation}
    If in addition \rip{\braces{\revision{u-\apprum}}}{\delta} is satisfied then
    \begin{equation} \label{eq:error_bound_2}
        \norm{\revision{\apprum - \epprum}} \le 2\sqrt{\frac{1+\delta}{1-\delta}} \norm{\revision{u - \apprum}}
    \end{equation}
    \revision{and consequently
    \begin{equation} \label{eq:quasi best-approximation}
        \norm{u - \apprum} \le \norm{u - \epprum}
        \le \pars*{1 + 2 \frac{\sqrt{1+\delta}}{\sqrt{1-\delta}}}\norm{u - \apprum} .
    \end{equation}}
\end{theorem}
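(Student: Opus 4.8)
The plan is to chain three comparisons between the relevant norms $\norm{\bullet}$, $\norm{\bullet}_n$ and $\norm{\bullet}_{w,\infty}$: pass from the true norm to the empirical norm via the lower RIP bound, introduce a factor of two by exploiting that $\epprum$ minimizes the empirical residual, and finally control the empirical residual either by $\norm{\bullet}_{w,\infty}$ (for the first claim) or, under the extra hypothesis, by $\norm{\bullet}$ (for the second claim). The quasi-optimality statement is then a one-line consequence of the triangle inequality.

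First I would note that $\apprum-\epprum\in\braces{\apprum}-\mcal{M}$ since $\epprum\in\mcal{M}$, so the assumed $\rip{\braces{\apprum}-\mcal{M}}{\delta}$ gives the lower bound $\pars{1-\delta}\norm{\apprum-\epprum}^2\le\norm{\apprum-\epprum}_n^2$, i.e.\ $\norm{\apprum-\epprum}\le\pars{1-\delta}^{-1/2}\norm{\apprum-\epprum}_n$. I then split the empirical norm by the triangle inequality, $\norm{\apprum-\epprum}_n\le\norm{u-\apprum}_n+\norm{u-\epprum}_n$, and use the defining minimality of $\epprum$ from~\eqref{eq:emp_min}, namely $\norm{u-\epprum}_n\le\norm{u-\apprum}_n$ (which holds because $\apprum\in\mcal{M}$), to obtain $\norm{\apprum-\epprum}_n\le 2\norm{u-\apprum}_n$. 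Combining the two steps yields the key intermediate estimate $\norm{\apprum-\epprum}\le 2\pars{1-\delta}^{-1/2}\norm{u-\apprum}_n$.

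To close the first claim~\eqref{eq:error_bound_1} I bound the remaining factor pointwise: almost surely $w\pars{y_i}\abs{u-\apprum}_{y_i}^2\le\norm{u-\apprum}_{w,\infty}^2$ for every sample, so $\norm{u-\apprum}_n\le\norm{u-\apprum}_{w,\infty}$. For the second claim~\eqref{eq:error_bound_2} the additional hypothesis $\rip{\braces{u-\apprum}}{\delta}$ supplies the upper bound $\norm{u-\apprum}_n\le\sqrt{1+\delta}\,\norm{u-\apprum}$, which I substitute into the same intermediate estimate. The quasi-optimality bound then follows, its lower half $\norm{u-\apprum}\le\norm{u-\epprum}$ being immediate from $\apprum$ being a true best approximation together with $\epprum\in\mcal{M}$, and its upper half from the triangle inequality $\norm{u-\epprum}\le\norm{u-\apprum}+\norm{\apprum-\epprum}$ combined with~\eqref{eq:error_bound_2}.

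The argument is essentially a bookkeeping of inequalities, so there is no deep obstacle; the only technical care-point is the pointwise estimate $\norm{\bullet}_n\le\norm{\bullet}_{w,\infty}$, which holds almost surely because the samples are drawn from $w^{-1}\rho$, a measure absolutely continuous with respect to $\rho$, so they almost never exceed the essential supremum defining $\norm{\bullet}_{w,\infty}$. The genuinely instructive step is conceptual rather than computational: recognizing that the two RIP sets are chosen precisely so that $\apprum-\epprum$ lies in the difference set $\braces{\apprum}-\mcal{M}$ (giving the lower bound) while $u-\apprum$ lies in the singleton $\braces{u-\apprum}$ (giving the upper bound).
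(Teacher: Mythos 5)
Your proposal is correct and follows essentially the same route as the paper's own proof: membership of $\apprum-\epprum$ in $\braces{\apprum}-\mcal{M}$ plus the lower RIP bound, the triangle inequality combined with the empirical minimality of $\epprum$ to get the factor $2\pars{1-\delta}^{-1/2}\norm{u-\apprum}_n$, and then either $\norm{\bullet}_n\le\norm{\bullet}_{w,\infty}$ or the upper RIP bound on the singleton $\braces{u-\apprum}$, with the quasi-optimality statement following from the triangle inequality. Your added remark that $\norm{\bullet}_n\le\norm{\bullet}_{w,\infty}$ holds almost surely because the samples are drawn from $w^{-1}\rho\ll\rho$ is a correct and slightly more careful justification of a step the paper states without comment.
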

\begin{proof}
    First observe that $\revision{\epprum} \in \mcal{M}$ and therefore $\revision{\apprum - \epprum} \in \revision{\braces{\apprum}} - \mcal{M}$.
    \revision{By $\rip{\braces{\apprum} - \mcal{M}}{\delta}$, the triangle inequality and the definition of $\epprum$, we deduce}
    \begin{align}
        \norm{\revision{\apprum - \epprum}}
        &\le \frac{1}{\sqrt{1-\delta}}\norm{\revision{\apprum - \epprum}}_n\\
        &\le \frac{1}{\sqrt{1-\delta}}\bracs*{\norm{\revision{\apprum - u}}_n + \norm{\revision{u - \epprum}}_n}\\
        &\le 2 \frac{1}{\sqrt{1-\delta}}\norm{\revision{u-\apprum}}_n.
    \end{align}
    \revision{Hence, equation~\eqref{eq:error_bound_1} holds since $\norm{v}_n \le \norm{v}_{w,\infty}$} is satisfied for all $v\in \mcal{V}$ and in particular for \revision{$u-\apprum$}.
    \revision{Equation~\eqref{eq:error_bound_2}} follows by an application of \rip{\braces{\revision{u-\apprum}}}{\delta} \revision{and from it equation~\eqref{eq:quasi best-approximation} follows by an application of the triangle inequality to $\norm{u - \epprum}$.}
    \qedhere
\end{proof}

\begin{remark}
    This proves the main result from the introduction for $\tilde{K} = \max\braces{K\pars{U\pars{\braces{u_{\mcal{M}}}-\mcal{M}}}, K\pars{U\pars{\braces{u-u_{\mcal{M}}}}}}$.
    Note that $\ln\pars{p}\in\mcal{O}\pars{-n\delta^2\tilde{K}^{-2}}$ hides the dependence on the covering number $\nu_{\norm{\bullet}_{w,\infty}}\pars{U\pars{\mcal{M}}, r}$ which does not depend on $n$.
\end{remark}

\revision{\begin{remark}
    Theorem~\ref{thm:error_bound} bounds $\norm{\apprum - \epprum}$ even if $\apprum$ and $\epprum$ are not uniquely defined.
\end{remark}}

\begin{remark}
    \revision{Theorem~\ref{thm:error_bound} requires \rip{\braces{\apprum}-\mcal{M}}{\delta} and \rip{\braces{u-\apprum}}{\delta}.
    If the covering number of $U\pars{\braces{\apprum} - \mcal{M}}$ is finite then $K\pars{U\pars{\braces{\apprum} - \mcal{M}}}$ and $K\pars{U\pars{\braces{u - \apprum}}}$ are bounded and Theorem~\ref{thm:rip} guarantees that \rip{\braces{\apprum}-\mcal{M}}{\delta} and \rip{\braces{u-\apprum}}{\delta} hold when $n$ is large enough.
    
    If $u\in\mcal{M}$ then \rip{\braces{u - \apprum}}{\delta} is implied by \rip{\braces{\apprum}-\mcal{M}}{\delta} and bounds for the sample complexity of some well-known model classes are given in Section~\ref{sec:examples}.
    If $u\not\in\mcal{M}$ then the probability of \rip{\braces{u-\apprum}}{\delta} has to be bounded separately.
    Since $\nu\pars{U\pars{\braces{u-\apprum}}, r} = 1$, we only need to bound $K\pars{U\pars{\braces{u-\apprum}}}$ to apply Theorem~\ref{thm:rip}.
    Since $K\pars{U\pars{\braces{u - \apprum}}}$ depends only on $u-\apprum$ it is a purely approximation theoretic constant and we provide explicit bounds for two examples in the following.
    \begin{itemize}
        \item Let $\mcal{M}$ be a space of low-rank functions and $\mcal{M}_{\omega,s}$ be an appropriately defined space of sparse functions as defined in Section~\ref{sec:sparse}.
        If $u = u_{\mathrm{low\mhyphen rank}} + u_{\mathrm{sparse}}$ with $u_{\mathrm{low\mhyphen rank}}\in\mcal{M}$ and $u_{\mathrm{sparse}} \in \mcal{M}_{\omega, s}$ then $K\pars{U\pars{\braces{u - \apprum}}} \le s^2$.
        \item Consider $u\pars{x} := \sin\pars{\pi x}$ and assume that $\apprum\pars{x}= \sum_{k=0}^m (-1)^k \frac{\pars{\pi x}^{2k+1}}{\pars{2k+1}!}$.
        From this one can derive that $K\pars{U\pars{\braces{u - \apprum}}} \le 4m + 7$.
    \end{itemize}
    }

\end{remark}

\begin{remark}[Indicator for \rip{A}{\delta}] \label{rmk:indicator}
    \revision{In Theorem~\ref{thm:error_bound} there is no constraint on the samples $\braces{y_i}_{i=1}^n$ except that they satisfy the RIP.
    They explicitly do not have to be i.i.d.\ random variables.
    This means that they could theoretically be determined by a deterministic quadrature rule.
    The challenge however is to ensure the RIP.}
    In~\cite{cohen_migliorati} the empirical Gramian could be used to verify this RIP for a given sample set.
    In the nonlinear setting this is not possible.
    To obtain a practical indicator for the convergence of our method we make the following considerations.
    Define $A := \pars{\revision{\braces{\apprum}}-\mcal{M}}\cup \braces{\revision{u-\apprum}}$,  $e_n := \norm{\revision{u-\epprum}}$ and $e := \norm{\revision{u-\apprum}}$.
    Observe that for $\delta \le \frac{1}{\sqrt{2}}$
    \begin{equation}
        1 + \delta \le \sqrt{\frac{1+\delta}{1-\delta}} \le 1 + 2\delta.
    \end{equation}
    Combining the second inequality with Theorem~\ref{thm:error_bound} leads to
    \begin{align}
        \rip{A}{\delta} &\Rightarrow e_n \le \pars*{1 + 2\sqrt{\tfrac{1+\delta}{1-\delta}}} e \le \pars{1 + 2\pars{1+2\delta}} e \\
        &\Rightarrow e_n \le \pars{3 + 4\delta} e.
    \end{align}
    Therefore,
    \begin{equation} \label{eq:simple_bound}
        \mbb{P}\bracs{\rip{A}{\delta}} \le \mbb{P}\bracs{e_n \le \pars{3 + 4\delta} e} .
    \end{equation}
    By Theorem~\ref{thm:rip} there exist $c$ and $\nu\pars{\delta}$ such that
    \begin{align}
        1 - \nu\pars{\delta}\exp\pars{-cn\delta^2} &\le \mbb{P}\bracs{\rip{A}{\delta}} .
    \intertext{Combining this with~\eqref{eq:simple_bound} yields}
        1 - \nu\pars{\delta}\exp\pars{-cn\delta^2} &\le \mbb{P}\bracs{e_n \le \pars{3 + 4\delta} e} =: p\pars{\delta} . \label{eq:p_delta}
    \end{align}
    Since $p\pars{\delta}$ is increasing in $\delta$, we can define an inverse
    \revision{in the sense of the quantile function $\delta\pars{\tilde{p}} := \inf\braces{\tilde{\delta}\in\mbb{R}_{\ge 0}:\tilde{p}\le p\pars{\tilde{\delta}}}$.
    For fixed $\tilde{p} := p\pars{\delta}$ in equation~\eqref{eq:p_delta} it then follows that $\delta \ge \delta\pars{\tilde{p}} =: \tilde{\delta}$ and consequently
    \begin{equation}
        -\ln\pars{1-\tilde{p}} \ge c n \tilde{\delta}^2 - \ln\pars{\nu\pars{\tilde{\delta}}}
    \end{equation}
    or equivalently
    \begin{equation}
        -\ln\pars{1-p} \ge c n \delta\pars{p}^2 - \ln\pars{\nu\pars{\delta\pars{p}}} .
    \end{equation}}
    \revision{Since $\delta\pars{p}\ge 0$ is increasing and $-\ln\pars{\nu\pars{\delta\pars{p}}} \xrightarrow{p\to 1} 0$, the second term in the above sum becomes negligible for large $p$.
    This yields}
    \begin{equation}
        \delta\pars{p} %
        \lesssim n^{-1/2}
    \end{equation}
    from which follows that
    \begin{equation}
        e_n \le \pars{3+4n^{-1/2}} e \le 4 \pars{1 + n^{-{1}/{2}}} e .
    \end{equation}
    \revision{We can use this bound as an indicator for when \rip{A}{\delta} is attained for some $\delta\le\tfrac{1}{\sqrt{2}}$.
    To do this we select a test set of $n'$ samples and observe the test set error $\tilde{e}_n := \norm{u-\epprum}_{n'}$ as the number of samples $n$ is increased.
    When $\tilde{e}_n$ begins to decrease with a rate of $\pars{1+n^{-r}}$ we take this as an indication that \rip{A}{\delta} is satisfied and that additional sampling is unnecessary.}
    This is illustrated in Figure~\ref{fig:err_vs_nr_samples}.

    \begin{figure}[!ht]
        \scriptsize
        \includegraphics[width=\textwidth]{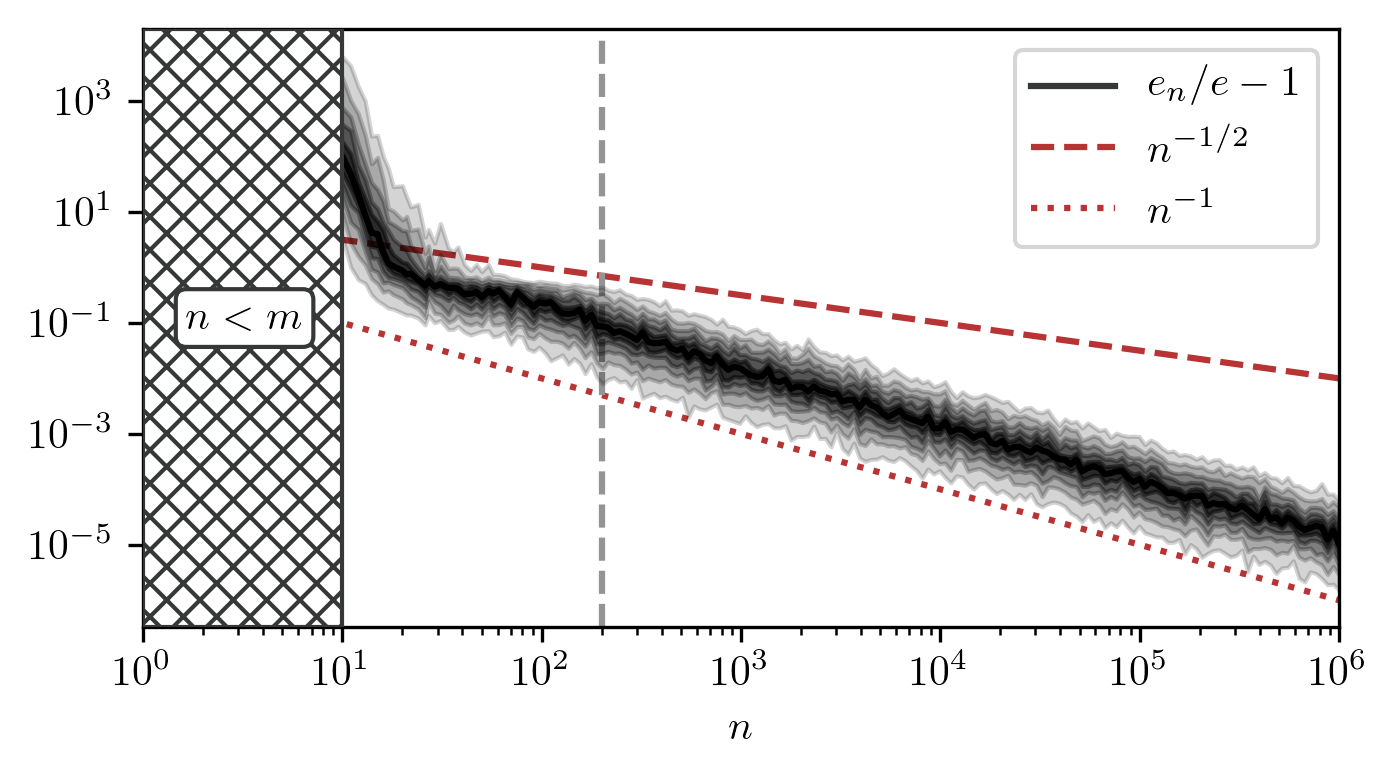}
        \caption{
            \revision{Let $\mcal{V}=L^2\pars{\bracs{-1,1}, \tfrac{\dx{x}}{2}}$, $w\equiv 1$ and $\mcal{M}$ be the model space of polynomials of degree less than $m=10$.
            Let moreover $A$, $e_n$ and $e$ be defined as in Remark~\ref{rmk:indicator}.
            Depicted is the distribution of the random variable $e_n/e-1$ for different values of $n$ and a synthetic (but fixed) function $u$.}
            The hatched area on the left marks a range of $n$ where the approximation problem is underdetermined and any error can be reached.
            When $n\ge m$ the approximation problem has a unique solution in the least squares sense. From this point until the gray and dashed line, an exponential decay of the error can be observed. This decay results from the exponentially fast convergence of the probability for \rip{A}{\delta} w.r.t.\ $n$.
            From there on, \rip{A}{\delta} holds with a high probability and the error decays with a rate of $n^{-1}$.
            \revision{Remark~\ref{rmk:indicator} predicts a rate of $n^{-1/2}$ but the condition $e_n \le c\pars{1 + n^{-r}}$ is satisfied for $c=r=1$.}
            \revision{This faster decay can be explained by the fact that for the linear space $\mcal{M}$ the bounds in the proof of Theorem~\ref{thm:rip} are suboptimal (see Example~\ref{sec:linear})}.
        }
        \label{fig:err_vs_nr_samples}
    \end{figure}
\end{remark}

\begin{remark}[Reconstruction with Noise]
    Consider the randomly perturbed seminorm $\abs{v}_y + \eta_y$ where $\eta_y$ is a centered random process satisfying the bound $w\pars{y}\eta_y^2 \le \frac{1}{4} \pars{1-\delta} \varepsilon^2$ for some $\varepsilon>0$ and $\delta \in \pars{0,1}$.
    This seminorm induces the perturbed empirical norm
    \begin{equation}
        \norm{v}_{\eta,n} := \pars*{\frac{1}{n} \sum_{i=1}^n w\pars{y_i} \pars{\abs{v}_{y_i} + \eta_{y_i}}^2}^{1/2}
    \end{equation}
    and the perturbed empirical best approximation
    \begin{equation}
        \revision{u_{\mcal{M},n,\eta}} \in \argmin_{v\in\mcal{M}} \norm{u - v}_{\eta,n} .
    \end{equation}
    Assume that \rip{\revision{\braces{\apprum}}-\mcal{M}}{\delta} holds.
    Then
    \begin{equation}
        \norm{\revision{\apprum - u_{\mcal{M},n,\eta}}} \le 2\frac{1}{\sqrt{1-\delta}} \norm{u-\apprum}_{w,\infty} + \varepsilon.
    \end{equation}
    If in addition \rip{\braces{\revision{u-\apprum}}}{\delta} is satisfied then
    \begin{equation}
        \norm{\revision{\apprum - u_{\mcal{M},n,\eta}}} \le 2\sqrt{\frac{1+\delta}{1-\delta}} \norm{u-\apprum} + \varepsilon.
    \end{equation}
\end{remark}

\revision{
\begin{remark} \label{rmk:residuals}
    A generalization of Theorem~\ref{thm:error_bound} also holds for the residual minimization problem
    \begin{equation}
        \min_{v\in\mcal{M}} \norm{u - Lv}
    \end{equation}
    since whenever \rip{\braces{u}-L\mcal{M}}{\delta} and \rip{\braces{u-Lv^*_{\mcal{M}}}}{\delta} hold we can estimate
    \begin{equation}
        \norm{u-Lv^*_{\mcal{M},n}} \lesssim \norm{u-Lv^*_{\mcal{M},n}}_n \le \norm{u-Lv^*_{\mcal{M}}}_n \lesssim \norm{u-Lv^*_{\mcal{M}}} .
    \end{equation}
    This means that the present theory can treat residual minimization problems by estimating the RIP for modified model classes.
    An important application of such a problem arises in medical imaging and is briefly discussed in Example~\ref{ex:MRI}.
\end{remark}
}

\section{Examples and numerical illustrations}
\label{sec:examples}

In this section, we examine some exemplary model spaces to which the developed theory can be applied.
More specifically, we consider linear spaces, sparse vectors and tensors of fixed rank.
The following theorem is central to the further considerations.

\begin{theorem}\label{thm:pointwise_K}
    Let $\mcal{V}$ be a separable vector space and $A\subseteq\mcal V$.
    Then the pointwise supremum $\hat b\pars{y} \coloneqq \sup_{v\in A} \abs{v}_y^2$ with respect to $y\in Y$ is measurable and \revision{for any weight function $w$}
    \begin{equation}
        K\pars{A} = \norm{w \hat{b}}_{L^\infty\pars{Y,\rho}} .
    \end{equation}
    \revision{If $A$ is $\norm{\bullet}$-bounded and $K\pars{A}$ is finite then
    \begin{equation}
        K\pars{A} \ge \norm{\hat b}_{L^1\pars{Y,\rho}},
    \end{equation}
    where the lower bound is attained by the weight function $w = \norm{\hat b}_{L^1\pars{Y,\rho}}{\hat b}^{-1}$.}
\end{theorem}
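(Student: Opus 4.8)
The plan is to treat the three assertions in turn: measurability of the envelope $\hat b$, the identity $K\pars{A}=\norm{w\hat b}_{L^\infty\pars{Y,\rho}}$, and the lower bound together with its attaining weight. The recurring device is to replace the supremum over the (possibly uncountable) set $A$ by a supremum over a countable dense subset, which is available because $\mcal V$ is separable.

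First I would fix a countable dense subset $D=\braces{v_k}_{k\in\mbb N}\subseteq A$. For each fixed $y$ the map $v\mapsto\abs{v}_y$ is a seminorm, and assuming it is $\norm{\bullet}$-lower semicontinuous one has, for any $v\in A$ and any sequence $v_{k_j}\to v$ in $\norm{\bullet}$, the estimate $\abs{v}_y\le\liminf_j\abs{v_{k_j}}_y\le\sup_k\abs{v_k}_y$. Combined with the trivial reverse inequality this gives $\hat b\pars{y}=\sup_{v\in A}\abs{v}_y^2=\sup_k\abs{v_k}_y^2$ for every $y$, so that $\hat b$ is measurable as a countable supremum of the measurable maps $y\mapsto\abs{v_k}_y^2$. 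I expect this reduction to be the main obstacle: it hinges on the (lower semi)continuity of $v\mapsto\abs{v}_y$ for fixed $y$ --- equivalently, on $\pars{y,v}\mapsto\abs{v}_y^2$ having Carath\'eodory structure --- and it is precisely here that separability is indispensable (as in the measurable maximum theorem).

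Next, for the identity I would prove two inequalities. The bound $K\pars{A}\le\norm{w\hat b}_{L^\infty}$ is immediate: from $\abs{u}_y^2\le\hat b\pars{y}$ and $w\ge0$ we get $\norm{u}_{w,\infty}^2=\esssup_y w\pars{y}\abs{u}_y^2\le\esssup_y w\pars{y}\hat b\pars{y}=\norm{w\hat b}_{L^\infty}$, and taking the supremum over $u\in A$ concludes. The reverse inequality is where the countable representation pays off: since a countable union of $\rho$-null sets is null, one may interchange the countable supremum with the essential supremum, $\norm{w\hat b}_{L^\infty}=\esssup_y\sup_k w\pars{y}\abs{v_k}_y^2=\sup_k\esssup_y w\pars{y}\abs{v_k}_y^2=\sup_k\norm{v_k}_{w,\infty}^2\le K\pars{A}$. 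Chaining the two inequalities yields $K\pars{A}=\norm{w\hat b}_{L^\infty\pars{Y,\rho}}$ for every admissible $w$.

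Finally, for the lower bound I would use admissibility of $w$, namely $\int_Y w^{-1}\dx{\rho}=1$ (which already forces $\rho\pars{\braces{w=0}}=0$, so $w>0$ a.e.), and write $\hat b=\pars{w\hat b}w^{-1}$ a.e. A H\"older-type estimate then gives $\norm{\hat b}_{L^1\pars{Y,\rho}}=\int_Y\pars{w\hat b}w^{-1}\dx{\rho}\le\norm{w\hat b}_{L^\infty}\int_Y w^{-1}\dx{\rho}=K\pars{A}$. Here the hypotheses that $A$ is $\norm{\bullet}$-bounded and $K\pars{A}<\infty$ guarantee that $\hat b<\infty$ $\rho$-a.e. and $\norm{\hat b}_{L^1}<\infty$, so the envelope is finite a.e. and the candidate weight is well defined. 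To verify attainment I would set $w=\norm{\hat b}_{L^1\pars{Y,\rho}}\hat b^{-1}$ on $\braces{\hat b>0}$: this is nonnegative, satisfies $\int_Y w^{-1}\dx{\rho}=\int_Y\hat b\,\norm{\hat b}_{L^1}^{-1}\dx{\rho}=1$, and renders $w\hat b\equiv\norm{\hat b}_{L^1}$ constant, so that $\norm{w\hat b}_{L^\infty}=\norm{\hat b}_{L^1\pars{Y,\rho}}$ attains the bound. On the degenerate set $\braces{\hat b=0}$, where every seminorm in $A$ vanishes, $w$ may be fixed arbitrarily subject to admissibility without affecting the essential supremum.
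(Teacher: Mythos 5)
Your proposal is correct, and for two of the three assertions it follows the same skeleton as the paper: the measurability of $\hat b$ via a countable dense subset $\braces{v_k}\subseteq A$ (available by separability), and the lower bound $\norm{\hat b}_{L^1\pars{Y,\rho}}\le K\pars{A}$ via the H\"older-type estimate $\int_Y \hat b\dx{\rho}=\int_Y \pars{w\hat b}w^{-1}\dx{\rho}\le \norm{w\hat b}_{L^\infty}\int_Y w^{-1}\dx{\rho}$, which is exactly the paper's integrability estimate. You are in fact more careful than the paper on two points it leaves tacit: you name the lower semicontinuity of $v\mapsto\abs{v}_y$ needed to justify $\sup_{v\in A}\abs{v}_y^2=\sup_k\abs{v_k}_y^2$, and you supply the countable sup/esssup interchange (union of null sets $N_k$) that turns this into the identity $K\pars{A}=\norm{w\hat b}_{L^\infty\pars{Y,\rho}}$, a step the paper treats as immediate. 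Where you genuinely diverge is the attainment claim: the paper substitutes $w=\pars{v\hat b}^{-1}$, recasts the problem as minimizing $\norm{v^{-1}}_{L^\infty\pars{Y,\rho}}$ subject to $\int_Y\hat b v\dx{\rho}=1$, and then sketches (via a partition $Y=\Omega_1\cup\Omega_2$ and a two-parameter reduction, with a figure) that an optimal $v$ must be constant; you instead verify directly that $w=\norm{\hat b}_{L^1\pars{Y,\rho}}\hat b^{-1}$ is admissible and makes $w\hat b$ constant, so that $\norm{w\hat b}_{L^\infty}=\norm{\hat b}_{L^1}$, which combined with your H\"older bound closes the argument. Your route buys a complete, elementary proof of precisely what the theorem states, where the paper offers only a proof sketch; the paper's variational detour buys the stronger characterization that this weight \emph{minimizes} $K_w$ over all admissible weights, information not needed for the statement since the H\"older inequality already shows no weight can beat $\norm{\hat b}_{L^1}$. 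Your handling of the degenerate set $\braces{\hat b=0}$ (where admissibility effectively forces the sampling density $w^{-1}\rho$ to put no mass) is also more scrupulous than the paper, which defines $\hat b^{-1}$ without comment.
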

\begin{proof}
    See Appendix~\ref{app:thm:pointwise_K}.
\end{proof}

This theorem allows to analyse the seminorm and the model class independently from the choice of weight function which can be chosen optimally when these first two parameters are fixed.

\subsection{Linear Spaces}
\label{sec:linear}

\revision{
Consider an $m$-dimensional linear subspace $\mcal{V}_m \subseteq \mcal{V} := L^2\pars{Y, \rho}$ spanned by the \emph{orthonormal} basis $\braces{\vec{B}_j}_{j\in\bracs{m}}$.
Recall that Theorem~\ref{thm:pointwise_K} implies $K\pars{U\pars{\mcal{V}_m}} = \norm{w\hat{b}}_{L^\infty\pars{Y,\rho}}$ where
\begin{equation}
    \hat{b}\pars{y}
    = \sup_{\substack{v\in\mcal{V}_m\\\norm{v}=1}} \abs{v\pars{y}}^2
    = \sup_{\substack{\vec{v}\in\mbb{R}^m\\\norm{\vec{v}}_2=1}}\abs{\vec{B}\pars{y}^\intercal \vec{v}}^2
    = \norm{\vec{B}\pars{y}}_2^2 .
\end{equation}
Here, the second equality follows by orthonormality and the third by the Cauchy--Schwarz inequality.
From this, Theorem~\ref{thm:pointwise_K} implies
\begin{equation}
    K\pars{U\pars{\mcal{V}_m}} \ge \norm{\hat{b}}_{L^1\pars{Y,\rho}} = m
\end{equation}
where the optimal weight function is given by $w\pars{y} \coloneqq m\norm{\boldsymbol{B}\pars{y}}_2^{-2}$.
Note that this fact was already reported in~\cite{cohen_migliorati}.}

\revision{Using the fact that $\norm{v}_{w,\infty} \le \sqrt{K}\norm{v}$, we obtain
\begin{equation}
    \nu_{\norm{\bullet}_{w,\infty}}\pars{U\pars{\mcal{V}_m}, r}
    \le \nu_{\norm{\bullet}}\pars*{U\pars{\mcal{V}_m}, \frac{r}{\sqrt{K}}}
    \le \pars*{\frac{r}{2m}}^{-m} .
\end{equation}
Corollary~\ref{cor:sample_size_rip} then bounds the sample complexity of this model class by
\begin{equation}
    n \le 2\pars*{m\ln\pars*{\frac{8m^{3/2}}{\delta}} - \ln\pars*{\frac{p}{2}}} \pars*{\frac{m}{\delta}}^2 \in \mcal{O}\pars{m^3\ln\pars{m}}.
\end{equation}
Although our approach is more general the resulting asymptotic bound differs only by a factor of $m^2$ from the bound $n\in\mcal{O}\pars{m\ln\pars{m}}$ provided in~\cite{cohen_migliorati}.
The near optimal bound in~\cite{cohen_migliorati}
is obtained by using tighter concentration inequalities %
(cf.~\cite{user_friendly_tail_bounds}) when bounding the probability of \rip{\mcal{V}_m}{\delta} in Theorem~\ref{thm:cohen_migliorati}.}

\revision{
\begin{remark}
    When the sampling density cannot be changed, the variation constant can also be used to guide the choice of a suitable model class.
    For linear spaces this section shows that an optimal model space is spanned by an orthonormal basis for which the basis functions are bounded by $1$.
    Such spaces are characterized in~\cite{kowalski11_pointwise_bf} and a prime example is the Fourier basis of $L^2\pars{[-1,1],\frac{\dx{x}}{2}; \mbb{C}}$.
\end{remark}
}

\subsection{Sets of sparse functions}
\label{sec:sparse}

In this section we follow the ideas of~\cite{rauhut2016weigtedl1} and consider spaces with weighted sparsity constraints.
For any sequence $\omega\in \mbb{R}_{\ge0}^{\mbb{N}}$ and any subset $S\subseteq\mbb{N}$, define a weighted cardinality and a weighted $\ell^0$-seminorm by
\begin{equation}
    \omega\pars{S} := \sum_{j\in S} \omega_j^{2}
    \qquad\text{and}\qquad
    \norm{\vec{v}}_{\omega,0} := \omega\pars{\operatorname{supp}\pars{\vec{v}}} .
\end{equation}
Observe that \revision{$\omega \preceq \tilde{\omega}$ (i.e.\ $\omega_j \le \tilde{\omega}_j$ for all $j$)} implies $\omega\pars{S} \le \tilde{\omega}\pars{S}$
and that $\omega\pars{S} = \abs{S}$ for $\omega\equiv \boldsymbol{1}$.

\revision{Let in the following $\braces{\vec{B}_j}_{j\in\mbb{N}}$ be a fixed \emph{orthonormal} basis for $\mcal{V} := L^2\pars{Y,\rho}$, fix a weight function $w$ and define the model set}
\begin{equation}
    \mcal{M}_{\omega, s} := \braces{v \in \mcal{V} : \norm{\vec{v}}_{\omega,0} \le s},
\end{equation}
where $\vec{v}$ denotes the coefficient vector of $v\in\mcal{V}$ with respect to the basis $\braces{\vec{B}_j}_{j\in\mbb{N}}$.

\begin{lemma}\label{lem:weighted_sparsity_properties}
    It holds that
    \begin{itemize}
        \item \revision{$\mcal{M}_{\tilde{\omega}, s} \subseteq \mcal{M}_{\omega, s}$ for $\omega\preceq\tilde{\omega}$,}
        \item $\mcal{M}_{\omega, s} \subseteq \mcal{M}_{\omega, t}$ for $s\le t$,
        \item $\mcal{M}_{\omega,s} = -\mcal{M}_{\omega,s}$ \revision{and}
        \item $\mcal{M}_{\omega,s}+\mcal{M}_{\omega,t} \subseteq \mcal{M}_{\omega,s+t}$.
    \end{itemize}
    \revision{Moreover, if $\omega_j \ge \norm{\vec{B}_j}_{w,\infty}$ for all $j$ then $\norm{v}_{w,\infty} \le \sqrt{s}\norm{v}$ for all $v\in\mcal{M}_{\omega, s}$.}
\end{lemma}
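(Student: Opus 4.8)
The plan is to dispatch the four inclusions as elementary consequences of the set function $S\mapsto\omega\pars{S}=\sum_{j\in S}\omega_j^2$ and to concentrate the real work on the concluding estimate. For the first inclusion I would use that $\omega\preceq\tilde\omega$ forces $\omega\pars{S}\le\tilde\omega\pars{S}$ for every $S$ (the monotonicity already noted right after the definition); applied to $S=\operatorname{supp}\pars{\vec v}$ this gives $\norm{\vec v}_{\omega,0}\le\norm{\vec v}_{\tilde\omega,0}\le s$ for any $v\in\mcal{M}_{\tilde\omega,s}$. The second inclusion is immediate from $\norm{\vec v}_{\omega,0}\le s\le t$. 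The third holds because $\vec v$ and $-\vec v$ share the same support and hence the same weighted $\ell^0$-seminorm, so membership in $\mcal{M}_{\omega,s}$ is invariant under negation.

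For the fourth inclusion I would combine the support relation $\operatorname{supp}\pars{\vec u+\vec v}\subseteq\operatorname{supp}\pars{\vec u}\cup\operatorname{supp}\pars{\vec v}$ with the subadditivity $\omega\pars{A\cup B}\le\omega\pars{A}+\omega\pars{B}$ (an equality only when $A$ and $B$ are disjoint). For $u\in\mcal{M}_{\omega,s}$ and $v\in\mcal{M}_{\omega,t}$ this yields $\norm{\vec u+\vec v}_{\omega,0}\le\norm{\vec u}_{\omega,0}+\norm{\vec v}_{\omega,0}\le s+t$, that is, $u+v\in\mcal{M}_{\omega,s+t}$.

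The substantive part is the final bound. Fix $v\in\mcal{M}_{\omega,s}$ with coefficient vector $\vec v$ and support $S:=\operatorname{supp}\pars{\vec v}$, write $v=\sum_{j\in S}v_j\vec B_j$, and record $\norm{v}^2=\sum_{j\in S}\abs{v_j}^2$ by orthonormality (Parseval). Since $\norm{\bullet}_{w,\infty}=\esssup_{y\in Y}\sqrt{w\pars{y}}\abs{\bullet}_y$ is a seminorm (an essential supremum of the fixed-$y$ seminorms $\sqrt{w\pars{y}}\abs{\bullet}_y$), the triangle inequality together with the hypothesis $\norm{\vec B_j}_{w,\infty}\le\omega_j$ gives
\begin{equation}
    \norm{v}_{w,\infty}\le\sum_{j\in S}\abs{v_j}\,\norm{\vec B_j}_{w,\infty}\le\sum_{j\in S}\abs{v_j}\,\omega_j .
\end{equation}
Cauchy--Schwarz followed by $\omega\pars{S}=\norm{\vec v}_{\omega,0}\le s$ then closes the argument,
\begin{equation}
    \sum_{j\in S}\abs{v_j}\,\omega_j\le\pars*{\sum_{j\in S}\abs{v_j}^2}^{1/2}\pars*{\sum_{j\in S}\omega_j^2}^{1/2}=\norm{v}\sqrt{\omega\pars{S}}\le\sqrt{s}\,\norm{v}.
\end{equation}

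The one point deserving care is the triangle inequality when $S$ is infinite, for then $v=\sum_{j\in S}v_j\vec B_j$ converges only in $L^2$. I would handle this by establishing the pointwise bound $\sqrt{w\pars{y}}\abs{v}_y\le\sum_{j\in S}\abs{v_j}\,\sqrt{w\pars{y}}\abs{\vec B_j}_y$ on a single conull set (a countable union of the individual null sets) and observing that the majorant series converges absolutely, since Cauchy--Schwarz already controls $\sum_{j\in S}\abs{v_j}\,\omega_j$ by $\sqrt{s}\,\norm{v}<\infty$; taking the essential supremum over $y$ then validates the first displayed inequality. I expect this absolute-convergence bookkeeping to be the only genuine obstacle, as the remaining steps merely unwind the definitions.
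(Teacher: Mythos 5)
Your proposal is correct and follows essentially the same route as the paper: the first four assertions are dispatched as elementary set/support manipulations (the paper calls them trivial), and the final bound is obtained exactly as in the paper via the triangle inequality, the hypothesis $\omega_j \ge \norm{\vec{B}_j}_{w,\infty}$, Cauchy--Schwarz and Parseval. Your closing remark on justifying the triangle inequality for infinite supports by absolute convergence of the majorant series on a conull set is a legitimate refinement of a point the paper's proof silently glosses over, not a different argument.
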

\begin{proof}
    The first four assertions are trivial.
    To prove the last one, let $v\in\mcal{M}_{\omega,s}$.
    Using the  triangle inequality and $\omega_j \ge \norm{\vec{B}_j}_{w,\infty}$, we obtain
    \begin{align} \label{eq:Linf-bound-weighted-sparsity}
        \norm{v}_{w,\infty}
        &\le \sum_{j=1}^\infty \abs{\vec{v}_j} \norm{\vec{B}_j\pars{y}}_{w,\infty} 
        \le \sum_{j=1}^\infty \abs{\vec{v}_j} \omega_j
        = \sum_{j\in\operatorname{supp}\pars{\vec{v}}} \abs{\vec{v}_j} \omega_j.
    \intertext{The Cauchy-Schwarz inequality, $\norm{\vec{v}}_{\omega,0} \le s$ and the orthonormality of $\vec{B}$ yield}
        \revision{\norm{v}_{w,\infty}}
        &\le \norm{\vec{v}}_2 \sqrt{\sum_{j\in\operatorname{supp}\pars{\vec{v}}} \omega_j^2}
        = \norm{\vec{v}}_2 \sqrt{\norm{\vec{v}}_{\omega,0}}
        \le \norm{v} \sqrt{s} .
    \end{align}
\end{proof}

\begin{lemma} \label{lem:weighted_sparsity_variation}
    \revision{Let $\omega_j \ge \norm{\vec{B}_j}_{w,\infty}$ for all $j$. Then} $K\pars{U\pars{\mcal{M}_{\omega,s}}} \le s$.
\end{lemma}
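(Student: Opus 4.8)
The plan is to deduce this almost directly from the final assertion of Lemma~\ref{lem:weighted_sparsity_properties}, which under the hypothesis $\omega_j \ge \norm{\vec{B}_j}_{w,\infty}$ already gives $\norm{v}_{w,\infty} \le \sqrt{s}\norm{v}$ for every $v\in\mcal{M}_{\omega,s}$. The only thing standing between this pointwise-to-norm estimate and the claimed bound on $K$ is the normalization operator $U$, so the first step is to observe that applying $U$ does not take us out of the model set.

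First I would note that $\mcal{M}_{\omega,s}$ is a cone: for any scalar $\alpha\neq 0$ the support of the coefficient vector $\alpha\vec{v}$ coincides with that of $\vec{v}$, so $\norm{\alpha\vec{v}}_{\omega,0} = \omega\pars{\operatorname{supp}\pars{\vec{v}}} = \norm{\vec{v}}_{\omega,0}$, and hence $v\in\mcal{M}_{\omega,s}$ implies $\alpha v\in\mcal{M}_{\omega,s}$. Since every element of $U\pars{\mcal{M}_{\omega,s}}$ is of the form $v/\norm{v}$ with $v\in\mcal{M}_{\omega,s}\setminus\braces{0}$, this scale invariance yields the inclusion $U\pars{\mcal{M}_{\omega,s}}\subseteq\mcal{M}_{\omega,s}$.

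Next I would take an arbitrary $u\in U\pars{\mcal{M}_{\omega,s}}$, which satisfies $\norm{u}=1$ by definition of $U$. By the inclusion just established, $u\in\mcal{M}_{\omega,s}$, so the last assertion of Lemma~\ref{lem:weighted_sparsity_properties} applies and gives $\norm{u}_{w,\infty}\le\sqrt{s}\norm{u}=\sqrt{s}$. Squaring and taking the supremum over all such $u$ then produces $K\pars{U\pars{\mcal{M}_{\omega,s}}} = \sup_{u\in U\pars{\mcal{M}_{\omega,s}}}\norm{u}_{w,\infty}^2 \le s$, which is exactly the claim.

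There is no genuine obstacle here, since the analytic content of the estimate resides entirely in Lemma~\ref{lem:weighted_sparsity_properties}. The only point requiring any care is the cone observation, which guarantees that normalization keeps us inside $\mcal{M}_{\omega,s}$ so that the bound $\norm{\bullet}_{w,\infty}\le\sqrt{s}\norm{\bullet}$ can legitimately be invoked on the normalized elements; without scale invariance one could not transfer the bound from $\mcal{M}_{\omega,s}$ to $U\pars{\mcal{M}_{\omega,s}}$.
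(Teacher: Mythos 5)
Your proof is correct and follows the same route as the paper, which simply states that the bound follows directly from Lemma~\ref{lem:weighted_sparsity_properties}; your explicit verification that $\mcal{M}_{\omega,s}$ is scale-invariant (so that $U\pars{\mcal{M}_{\omega,s}}\subseteq\mcal{M}_{\omega,s}$ and the estimate $\norm{v}_{w,\infty}\le\sqrt{s}\norm{v}$ applies to the normalized elements) is precisely the detail the paper leaves implicit.
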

\begin{proof}
    This follows directly from Lemma~\ref{lem:weighted_sparsity_properties}.
\end{proof}

\begin{remark}
    \revision{This setting also incorporates the standard sparsity class 
    \begin{equation}
        \mcal{M}_{\boldsymbol{1}, k} = \mcal{M}_{\boldsymbol{1}\omega_{\mathrm{max}}, k\omega_{\mathrm{max}}^2},
    \end{equation}
    where $\boldsymbol{1}=\pars{1,1\ldots}$ and $\omega_{\mathrm{max}} := \max_{j\in\bracs{m}} \norm{\vec{B}_j}_{w,\infty}$.
    This means that $K\pars{U\pars{\mcal{M}_{\boldsymbol{1},k}}} \le k\omega_{\mathrm{max}}^2$.
    When the chosen basis is a tensor product basis $\vec{B}_j = B_{j_1}\otimes\cdots\otimes B_{j_M}$ and the weight function has a product structure $w = w_1\otimes\cdots\otimes w_M$, this implies that $K\pars{U\pars{\mcal{M}_{\boldsymbol{1},k}}}$
    grows exponentially with the order $M$.
    This is a limitation when using classical isotropic sparsity for high-dimensional problems.}
\end{remark}

\begin{lemma} \label{lem:weighted_sparsity_covering}
    \revision{Let $\omega_j \ge \norm{\vec{B}_j}_{w,\infty}$ for all $j$ and let} $\mcal{V}_m$ be an $m$-dimensional subspace spanned by a subset of $\braces{\vec{B}_j}_{j\in\mbb{N}}$. Then \revision{there exists $C>0$ such that}
    \begin{equation}
        \nu_{\norm{\bullet}_{w,\infty}}\pars{U\pars{\mcal{M}_{\omega,s}\cap \mcal{V}_m}, r}
        \revision{\le} \pars*{\tfrac{\revision{C}m}{r\sqrt{s}}}^{s} .
    \end{equation}
\end{lemma}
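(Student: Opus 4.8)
The plan is to bound the covering number of $U\pars{\mcal{M}_{\omega,s}\cap\mcal{V}_m}$ by reducing to a finite-dimensional problem and applying the standard volumetric covering estimate for balls in $\mbb{R}^m$. The essential observation is that every $v\in\mcal{M}_{\omega,s}\cap\mcal{V}_m$ has a coefficient vector $\vec{v}\in\mbb{R}^m$ (supported on the finite index set defining $\mcal{V}_m$) whose weighted $\ell^0$-seminorm satisfies $\norm{\vec{v}}_{\omega,0}\le s$. Thus $\mcal{M}_{\omega,s}\cap\mcal{V}_m$ corresponds to a union of weighted-sparse vectors in $\mbb{R}^m$, and I intend to cover this set support-by-support.

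First I would fix a support pattern, i.e.\ a subset $S\subseteq\bracs{m}$ of admissible active coordinates. The constraint $\norm{\vec{v}}_{\omega,0}=\omega\pars{S}\le s$ restricts which supports are allowed, but regardless of the exact count the key point is that each admissible $S$ has ordinary cardinality $\abs{S}\le s$ (since $\omega_j\ge\norm{\vec{B}_j}_{w,\infty}$ can be assumed $\ge 1$ after normalization, or one argues directly that the number of active coordinates is controlled by $s$). On each fixed support the set of normalized elements lives in an $\abs{S}$-dimensional subspace. Using the embedding $\norm{v}_{w,\infty}\le\sqrt{K}\norm{v}\le\sqrt{s}\norm{v}$ from Lemma~\ref{lem:weighted_sparsity_properties}, I would pass from the $\norm{\bullet}_{w,\infty}$-covering number to the $\norm{\bullet}$-covering number at the cost of a factor $\sqrt{s}$ in the radius:
\begin{equation}
    \nu_{\norm{\bullet}_{w,\infty}}\pars{U\pars{\mcal{M}_{\omega,s}\cap\mcal{V}_m}, r}
    \le \nu_{\norm{\bullet}}\pars*{U\pars{\mcal{M}_{\omega,s}\cap\mcal{V}_m}, \tfrac{r}{\sqrt{s}}}.
\end{equation}
Since $\norm{\bullet}$ agrees with the Euclidean norm on coefficient vectors by orthonormality, the normalized set on each fixed support is contained in the unit sphere of a subspace of dimension at most $s$, whose $\eps$-covering number is bounded by $\pars{3/\eps}^{s}$ via the standard volume argument.

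Next I would take a union over all admissible supports. The number of supports of cardinality at most $s$ among $m$ coordinates is $\sum_{k\le s}\binom{m}{k}\le\pars{em/s}^{s}$, which combines with the per-support bound $\pars{3\sqrt{s}/r}^{s}$ to give a total of the form $\pars{Cm/(r\sqrt{s})}^{s}$ after absorbing the numerical constants $e$, $3$ and the $\sqrt{s}$ into $C$; this is exactly the claimed form. The main obstacle I anticipate is the bookkeeping linking the \emph{weighted} sparsity constraint $\omega\pars{S}\le s$ to an ordinary cardinality bound $\abs{S}\le s$ that drives both the subspace dimension and the number of supports — under the hypothesis $\omega_j\ge\norm{\vec{B}_j}_{w,\infty}$ one must verify that the weights are bounded below by a constant (or normalize so that $\omega_j\ge 1$), otherwise the weighted constraint would permit more active coordinates than $s$ and the dimension counting would break. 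I would therefore either argue that $\norm{\vec{B}_j}_{w,\infty}\ge\norm{\vec{B}_j}\cdot\text{(const)}=$ const by the embedding inequality applied on each one-dimensional span, or simply state the normalization $\min_j\omega_j\ge 1$ explicitly; once this is settled, both the binomial count and the volumetric estimate close the bound cleanly.
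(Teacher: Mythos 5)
Your proof is correct and follows essentially the same route as the paper: the paper likewise first passes from the $\norm{\bullet}_{w,\infty}$-covering number to the $\norm{\bullet}$-covering number via the embedding of Lemma~\ref{lem:weighted_sparsity_properties}, and then invokes the bound of~\cite{vershynin2009sparsity} for the unit sphere of $\floor{s}$-sparse vectors in $\mbb{R}^m$ --- which is exactly the union-over-supports plus volumetric argument you write out by hand; moreover your resolution of the $\omega_j \ge 1$ issue (namely $\omega_j \ge \norm{\vec{B}_j}_{w,\infty} \ge \norm{\vec{B}_j} = 1$, using that $\int_Y w^{-1}\dx{\rho} = 1$ implies $\norm{\bullet} \le \norm{\bullet}_{w,\infty}$) is precisely what justifies the paper's implicit inclusion $\mcal{M}_{\omega,s} \subseteq \mcal{M}_{\boldsymbol{1},s}$. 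One small repair: in your displayed \emph{global} comparison of covering numbers the difference $v - v_j$ of two elements of $\mcal{M}_{\omega,s}$ only lies in $\mcal{M}_{\omega,2s}$, so the radius loss is $\sqrt{2s}$ (as in the paper's proof), not $\sqrt{s}$; your support-by-support organization avoids this entirely (within a fixed support $S$ with $\omega\pars{S}\le s$ differences remain in $\mcal{M}_{\omega,s}$), and in any case the spurious $\sqrt{2}$ is absorbed into the constant $C$.
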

\begin{proof}
    We show that
    \begin{equation}
        \nu_{\norm{\bullet}_{w,\infty}}\pars{U\pars{\mcal{M}_{\omega,s}\cap \mcal{V}_m}, r}
        \le \nu_{\norm{\bullet}}\pars*{U\pars{\mcal{M}_{\omega,s}\cap \mcal{V}_m}, \frac{r}{\sqrt{2s}}} 
        \revision{\le} \pars*{\tfrac{\revision{C}m}{r\sqrt{s}}}^{s}.
    \end{equation}
    For the first step, let $\braces{v_j}$ be the centers of a $\norm{\bullet}$-covering of $U\pars{\mcal{M}_{\omega,s}\cap \mcal{V}_m}$ with radius $\frac{r}{\sqrt{2s}}$.
    Thus, for any $v \in U\pars{\mcal{M}_{\omega,s}\cap \mcal{V}_m}$ there exists $v_j$ such that $\norm{v-v_j} \le \frac{r}{\sqrt{2s}}$. Since $v-v_j\in\mcal{M}_{\omega,2s}$ and by Lemma~\ref{lem:weighted_sparsity_properties},
    \begin{equation}
        \norm{v - v_j}_{w,\infty} \le \sqrt{2s} \revision{\norm{v - v_j}} \le r .
    \end{equation}
    This implies that $\braces{v_j}$ are also the centers of an $\norm{\bullet}_{w,\infty}$-covering with radius $r$.
    
    For the second step, observe that $\mcal{M}_{\omega, s} \subseteq \mcal{M}_{\boldsymbol{1}, s} = \mcal{M}_{\boldsymbol{1}, \floor{s}}$.
    Since $\pars{\mcal{V}_m, \norm{\bullet}} \simeq \pars{\mbb{R}^m, \norm{\bullet}_2}$ it remains to compute the covering number for the unit sphere of $\floor{s}$-sparse vectors in $\mbb{R}^m$. A bound for this is given in~\cite{vershynin2009sparsity} by
    \begin{equation}
        \nu_{\norm{\bullet}_2}\pars*{S^{\mbb{R}^m}_1\pars{0}\cap \mcal{M}_{\boldsymbol{1}, \floor{s}}, \frac{r}{\sqrt{2s}}}
        \revision{\le} \pars*{\frac{\revision{C}m\sqrt{s}}{r\floor{s}}}^{\floor{s}}
        \le \pars*{\frac{\revision{2C}m}{r\sqrt{s}}}^{s} .
    \end{equation}
\end{proof}

\begin{theorem}\label{thm:sparse_rip}
    \revision{Let $\omega_j \ge \norm{\vec{B}_j}_{w,\infty}$ for all $j$ and let} $\mcal{V}_m$ be an $m$-dimensional subspace spanned by a subset of $\braces{\vec{B}_j}_{j\in\mbb{N}}$. Then,
    \begin{equation}
        \mbb{P}\bracs{\neg\rip{\mcal{M}_{\omega,s}\cap\mcal{V}_m}{\delta}} \lesssim \exp\pars*{ s\ln\pars*{\tfrac{m}{\delta}} - \tfrac{n}{2} \pars*{\tfrac{\delta}{s}}^2}.
    \end{equation}
\end{theorem}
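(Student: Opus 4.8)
The plan is to specialise Theorem~\ref{thm:rip} to the set $A := \mcal{M}_{\omega,s}\cap\mcal{V}_m$ and to feed it the variation and covering estimates established above. First I would verify the finiteness hypothesis. Since $A\subseteq\mcal{M}_{\omega,s}$ we have $U\pars{A}\subseteq U\pars{\mcal{M}_{\omega,s}}$, so $K$ is monotone under this inclusion and Lemma~\ref{lem:weighted_sparsity_variation} gives
\begin{equation}
    K := K\pars{U\pars{A}} \le K\pars{U\pars{\mcal{M}_{\omega,s}}} \le s < \infty .
\end{equation}
Theorem~\ref{thm:rip} is therefore applicable and, passing to the complementary event, yields
\begin{equation}
    \mbb{P}\bracs{\neg\rip{A}{\delta}}
    \le 2\,\nu_{\norm{\bullet}_{w,\infty}}\pars*{U\pars{A},\tfrac{1}{4}\tfrac{\delta}{\sqrt{K}}}\exp\pars*{-\tfrac{n}{2}\pars*{\tfrac{\delta}{K}}^2} .
\end{equation}

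The exact value of $K$ is unknown, so the next step is to replace it throughout by the bound $s$, taking care that each substitution loosens the estimate. In the Gaussian factor, $K\le s$ gives $\pars{\delta/K}^2\ge\pars{\delta/s}^2$, hence $\exp\pars{-\tfrac{n}{2}\pars{\delta/K}^2}\le\exp\pars{-\tfrac{n}{2}\pars{\delta/s}^2}$. In the covering number, $K\le s$ enlarges the radius, $\tfrac{1}{4}\tfrac{\delta}{\sqrt{K}}\ge\tfrac{1}{4}\tfrac{\delta}{\sqrt{s}}$, and since covering numbers are non-increasing in the radius I can pass to the smaller radius $\tfrac{1}{4}\tfrac{\delta}{\sqrt{s}}$ before invoking Lemma~\ref{lem:weighted_sparsity_covering}. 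With this choice the spurious $\sqrt{s}$ in the lemma cancels, leaving
\begin{equation}
    \nu_{\norm{\bullet}_{w,\infty}}\pars*{U\pars{A},\tfrac{1}{4}\tfrac{\delta}{\sqrt{K}}}
    \le \nu_{\norm{\bullet}_{w,\infty}}\pars*{U\pars{A},\tfrac{1}{4}\tfrac{\delta}{\sqrt{s}}}
    \le \pars*{\tfrac{4Cm}{\delta}}^{s} .
\end{equation}

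Combining the two estimates and rewriting the covering factor as an exponential gives
\begin{equation}
    \mbb{P}\bracs{\neg\rip{A}{\delta}}
    \le 2\exp\pars*{s\ln\pars*{\tfrac{4Cm}{\delta}} - \tfrac{n}{2}\pars*{\tfrac{\delta}{s}}^2} ,
\end{equation}
and absorbing the leading factor $2$ and the absolute constant $4C$ inside the logarithm into the $\lesssim$ symbol (legitimate once $m/\delta$ is bounded below by an absolute constant) produces the stated bound. The work here is essentially bookkeeping; the one point that genuinely needs checking is that eliminating $K$ via $K\le s$ points the inequality the same way in both the radius of the covering number and in the exponent. Both substitutions do weaken the bound, so no separate estimate of $K$ is required and the proof closes.
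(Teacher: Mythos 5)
Your proposal is correct and follows essentially the same route as the paper, whose proof is the one-line remark that the assertion ``follows directly from Theorem~\ref{thm:rip} together with Lemmas~\ref{lem:weighted_sparsity_variation} and~\ref{lem:weighted_sparsity_covering}''. You merely make explicit the bookkeeping the paper leaves implicit --- the monotonicity $K\pars{U\pars{\mcal{M}_{\omega,s}\cap\mcal{V}_m}} \le s$, the direction of both substitutions $K \le s$ in the radius and in the exponent, and the cancellation of $\sqrt{s}$ giving $\pars{4Cm/\delta}^{s}$ --- all of which checks out, with the residual constant absorbed into $\lesssim$ exactly as the paper's statement intends.
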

\begin{proof}
    The assertion follows directly from Theorem~\ref{thm:rip} together with Lemmas~\ref{lem:weighted_sparsity_variation} and~\ref{lem:weighted_sparsity_covering}.
\end{proof}

\begin{remark}
    Theorem~\ref{thm:sparse_rip} states a sample complexity of
    \begin{align}
        n &\lesssim s^2 \pars{s\ln\pars{m} - s\ln\pars{\delta} - \ln\pars{1-p}}\delta^{-2} .
    \intertext{%
    \revision{This} result can be compared with Theorem 5.2 in \cite{rauhut2016weigtedl1} where}
        n &\lesssim s\max\braces{\ln^3\pars{s} \ln\pars{m}, \ln\pars{p^{-1}}} \delta^{-2}
    \intertext{or Theorems~4.4 and~8.4 in~\cite{rauhut2010compressedSensing} where}
        n &\lesssim s\max\braces{\ln^2\pars{s} \ln\pars{m} \ln\pars{n}, \ln\pars{p^{-1}}} \delta^{-2} \revision{\omega_{\mathrm{max}}^2}
    \end{align}
    \revision{with $\omega_{\mathrm{max}} := \max_{j\in\bracs{m}} \norm{\vec{B}_j}_{w,\infty}$.
    Since our theory is very general, we cannot expect our bound to be as strong as these specialized bounds.
    This comparison however shows that our bound remains qualitatively similar up to polynomial factors.}
\end{remark}

\revision{
\begin{example}
    Consider the basis of tensorized Legendre polynomials $\vec{B}_{\vec{j}} = \bigotimes_{m=1}^M \vec{L}_{\vec{j}_m}$ and define the linear space $\mcal{V}_m$ in Theorem~\ref{thm:sparse_rip} as $\mcal{V}_m := \operatorname{span}\braces{\vec{B}_j : \omega_j^2 \le s}$.
    Then the bound in Theorem~\ref{thm:sparse_rip} depends on the parameter $s$ alone since the size of the hyperbolic cross
    \begin{equation*}
        \braces*{\vec{j}\in\mbb{N}^M : \omega_{\vec{j}}^2 \le s}
        \subseteq \braces*{\vec{j}\in\mbb{N}^M : \prod_{m=1}^M \pars{2\vec{j}_m + 1} \le s}
    \end{equation*}
    can be bounded by $m \lesssim s\log\pars{s}^{M-1}$ (cf.~\cite{rauhut2016weigtedl1}).
\end{example}
}

\revision{
According to Theorem~\ref{thm:pointwise_K}, the sampling density and weight function can be chosen optimally for a given model set.
For $\mcal{M}_{\omega,s}$ this is not straightforward because Lemma~\ref{thm:sparse_rip} bounds $K\pars{U\pars{\mcal{M}_{\omega,s}}} \le s$ independently of $w$ as long as $\omega_j \ge \norm{\vec{B}_j}_{w,\infty}$.
Note however that this bound is not unique since
\begin{equation}
    \mcal{M}_{\omega,s} = \mcal{M}_{c\omega, c^2s}
\end{equation}
for any $c>0$.
This means that $K\pars{U\pars{\mcal{M}_{\omega, s}}} \le c^2s$ for any $c$ that satisfies $c\omega_j \ge \norm{\vec{B}_j}_{w,\infty}$ and the smallest possible $c$ is given by $c_{\mathrm{min}} := \sup_{j\in\mbb{N}} \frac{\norm{\vec{B}_j}_{w,\infty}}{\omega_j}$.
An optimal weight function for the model class $\mcal{M}_{\omega,s}$ must thus minimize $c_{\mathrm{min}}$.
If we assume that $\omega \equiv \boldsymbol{1}$ then
\begin{equation}
    c_{\mathrm{min}}^2
    = \sup_{y\in Y} w\pars{y} \sup_{j\in\mbb{N}} \abs{\vec{B}_j}_y^2
    \le \sup_{y\in Y} w\pars{y} \hat{b}\pars{y}
    = \norm{w \hat{b}}_{L^\infty\pars{Y,\rho}} .
\end{equation}
From Theorem~\ref{thm:pointwise_K} we know that the minimum $\norm{w \hat{b}}_{L^\infty\pars{Y,\rho}} = \norm{\hat{b}}_{L^1\pars{Y,\rho}}$ is attained for the weight function $\tilde{w} = \norm{\hat{b}}_{L^1\pars{Y,\rho}}\hat{b}^{-1}$.
An upper bound for $\hat{b}$ and thus for $c_{\mathrm{min}}$ is computed in the subsequent Lemma~\ref{lem:weighted_sparsity_weight_function}.
The resulting sequence $\norm{\vec{B}_j}_{\tilde{w},\infty}$ is contrasted to the sequence $\norm{\vec{B}_j}_{w,\infty}$ for $j=1,\dots,100$ in Figure~\ref{fig:weight_sequences}.
We observe that the new weight function $\tilde{w}$ slightly increases $\norm{\vec{B}_1}_{\tilde{w},\infty}$ but considerably decreases $\norm{\vec{B}_j}_{\tilde{w},\infty}$ for all $j>1$.\footnote{This suggests a multi-level reconstruction where the $\vec{B}_1$-part is reconstructed with uniformly distributed samples and the remaining signal is reconstructed with respect to the newly computed sampling density and weight function. However, we have not tested this.}
A reconstruction using new weight function is shown in Figure~\ref{fig:reweighted_strongly_weighted_l1}.
Since the new constraint $\omega_j \ge \norm{\vec{B}_j}_{\tilde{w},\infty}$ is significantly weaker than the previous constraint $\omega_j \ge \norm{\vec{B}_j}_{\tilde{w},\infty}$ for $j>1$, one might ask what happens if the weight sequence $\omega$ is adapted as well.
Figure~\ref{fig:reweighted_weighted_l1} illustrates this for the smallest possible weight sequence $\omega_j := \norm{\vec{B}_j}_{\tilde{w},\infty}$.
Since this new weight sequence is almost constant (cf.\ Figure~\ref{fig:weight_sequences}) the resulting model class approximates the larger model class $\mcal{M}_{\boldsymbol{1},s}$.
This means that we can not expect the results in Figure~\ref{fig:reweighted_weighted_l1} to be better than those in Figure~\ref{fig:reweighted_strongly_weighted_l1}.
We observe however that they are indeed better than those in Figure~\ref{fig:unweighted_l1} where the model class $\mcal{M}_{\boldsymbol{1},s}$ was used.
}

\begin{lemma}\label{lem:weighted_sparsity_weight_function}
    Let $\mcal{V}_m$ be an $m$-dimensional subspace spanned by a subset of $\braces{\vec{B}_j}_{j\in\mbb{N}}$ and consider the model set $\mcal{M}_{\omega,s}\cap\mcal{V}_m$.
    Then
    \begin{equation}
        \hat{b}\pars{x} \le s \frac{\norm{\Omega^{-1}\vec{B}\pars{x}}_2^4}{\norm{\Omega^{-1}\vec{B}\pars{x}}_1^2}\quad\text{    with } \Omega := \operatorname{diag}\pars{\omega}.
    \end{equation}
\end{lemma}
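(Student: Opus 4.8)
The plan is to reduce the statement to a finite-dimensional constrained maximization and then bound that maximization. First I would use orthonormality: writing $\vec v$ for the coefficient vector of $v$ with respect to $\braces{\vec B_j}$ and $I$ for the $m$ indices spanning $\mcal V_m$, one has $v\pars{x} = \vec B\pars{x}^\intercal\vec v$ and $\norm{v} = \norm{\vec v}_2$. Since the normalization operator makes the problem $0$-homogeneous and $\mcal M_{\omega,s}\cap\mcal V_m$ is a union over admissible supports of linear subspaces, the inner maximization over unit-norm coefficient vectors with a fixed support $S$ is attained at $\vec v\propto\vec B_S\pars{x}$ by Cauchy--Schwarz. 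This collapses the definition of $\hat b$ to the combinatorial problem
\[
    \hat b\pars{x} = \max_{\substack{S\subseteq I\\ \omega\pars{S}\le s}}\ \sum_{j\in S}\abs{\vec B_j\pars{x}}^2 ,
\]
where $\omega\pars{S} = \sum_{j\in S}\omega_j^2$ is the weighted cardinality from Section~\ref{sec:sparse}.

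Next I would change variables to expose the vector $z := \Omega^{-1}\vec B\pars{x}$, i.e.\ $z_j = \vec B_j\pars{x}/\omega_j$, so that $\abs{\vec B_j\pars{x}}^2 = \omega_j^2 z_j^2$ and the problem becomes a continuous-knapsack relaxation with item weights $\omega_j^2$ and profit densities $z_j^2$:
\[
    \hat b\pars{x} \le \max_{\substack{t\in[0,1]^I\\ \sum_j t_j\omega_j^2\le s}}\ \sum_{j} t_j\,\omega_j^2 z_j^2 .
\]
The natural way to produce the $\ell^1$ and $\ell^2$ norms of $z$ is a weighted Cauchy--Schwarz directly on $\abs{v\pars{x}}$: splitting $\vec v_j\vec B_j\pars{x} = \pars{\omega_j\vec v_j}\pars{z_j}$ and using $\norm{\vec v}_\infty\le\norm{\vec v}_2 = 1$ together with $\sum_{j\in S}\omega_j^2\le s$ gives $\sum_{j\in S}\omega_j^2\vec v_j^2\le s$ and hence the crude estimate $\hat b\pars{x}\le s\sum_{j\in S}z_j^2\le s\norm{z}_2^2$.

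To reach the sharper factor $\norm{z}_2^4/\norm{z}_1^2$ I would analyse the relaxed knapsack through its Lagrangian: for any multiplier $\lambda>0$,
\[
    \sum_{j\in S}\omega_j^2 z_j^2 \le \lambda s + \sum_{j}\omega_j^2\pars{z_j^2-\lambda}_+ ,
\]
and I would optimize $\lambda$ so that the budget $s$ is balanced against the $\omega$-weighted excess of $z_j^2$ over the threshold. The idea is that, unlike the worst-case estimate $s\norm{z}_\infty^2$, choosing $\lambda$ proportional to the reciprocal participation ratio $\norm{z}_2^2/\norm{z}_1^2$ of $z$ spreads the budget across the support and yields a threshold value scaling like $\norm{z}_2^4/\norm{z}_1^2$; re-substituting $z=\Omega^{-1}\vec B\pars{x}$ then recovers the stated inequality with $\Omega = \operatorname{diag}\pars{\omega}$.

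The main obstacle is precisely this last step. The honest relaxation only delivers $\hat b\pars{x}\le s\norm{\Omega^{-1}\vec B\pars{x}}_2^2$ (indeed the exact knapsack value is $s\norm{\Omega^{-1}\vec B\pars{x}}_\infty^2$), and since $\norm{z}_2^4/\norm{z}_1^2\le\norm{z}_2^2$ the claimed bound is strictly tighter; the Lagrangian balancing must therefore exploit that the maximizing support spends the \emph{entire} budget $s$ rather than concentrating on a single cost-effective coordinate. I would check tightness on a flat $z$ (where $\norm{z}_2^4/\norm{z}_1^2 = \norm{z}_\infty^2$ and all three estimates coincide) and, conversely, verify carefully that the inequality does not reverse in the regime where $z$ has a few dominant comparable entries; this is the case I would examine most closely before committing to the ratio form.
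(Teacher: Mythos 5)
Your reduction is sound and, in substance, it is the same route the paper takes: the paper relaxes $\mcal{M}_{\omega,s}\cap\mcal{V}_m$ via Cauchy--Schwarz to the set $\braces{v\in\mcal{V}_m \colon \norm{\Omega\vec{v}}_1\le\sqrt{s}\norm{\vec{v}}_2}$ and then estimates
\begin{equation}
    \hat{b}\pars{x}
    \le s \sup_{\vec{w}\in\mbb{R}^m\setminus\braces{0}} \frac{\pars{\vec{w}^\intercal z}^2}{\norm{\vec{w}}_1^2},
    \qquad z := \Omega^{-1}\vec{B}\pars{x},
\end{equation}
which is your continuous knapsack relaxation in different clothing. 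The step where you stall is exactly where the paper's own proof goes wrong: by $\ell^1$--$\ell^\infty$ H\"older duality this supremum equals $\norm{z}_\infty^2$, but the paper evaluates it at the single test vector $\vec{w}=z$ (the maximizer appropriate for an $\ell^2$ denominator), obtaining $\norm{z}_2^4/\norm{z}_1^2$ --- a \emph{lower} bound on the supremum used as if it were its value. Consequently the only conclusions this chain supports are your ``honest'' bounds $\hat{b}\pars{x}\le s\norm{z}_\infty^2\le s\norm{z}_2^2$, and no Lagrangian balancing can recover the ratio form: since $\norm{z}_2^2\le\norm{z}_\infty\norm{z}_1$, one always has $\norm{z}_2^4/\norm{z}_1^2\le\norm{z}_\infty^2$, with equality only when all nonzero $\abs{z_j}$ coincide, so the printed bound strictly undercuts what the argument allows in the generic case.

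Your closing suspicion --- that the inequality reverses when $z$ has a dominant entry --- is correct: the lemma as stated is false, so the remaining gap in your attempt is unfillable. Take $\mcal{V}=L^2\pars{\bracs{-1,1},\tfrac{\dx{x}}{2}}$ with $w\equiv 1$, the Legendre basis $\vec{B}_1\equiv 1$, $\vec{B}_2\pars{x}=\sqrt{3}\,x$, weights $\omega=\pars{1,\sqrt{3}}$ (so the standing assumption $\omega_j\ge\norm{\vec{B}_j}_{w,\infty}$ of Section~\ref{sec:sparse} holds), $m=2$ and $s=1$. The only nonempty admissible support is $\braces{1}$, since $\omega\pars{\braces{2}}=3>s$; hence $\hat{b}\pars{x}=\vec{B}_1\pars{x}^2=1$ for every $x$, as your combinatorial identity $\hat{b}\pars{x}=\max_{S\colon\omega\pars{S}\le s}\sum_{j\in S}\vec{B}_j\pars{x}^2$ confirms. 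On the other hand $z=\Omega^{-1}\vec{B}\pars{x}=\pars{1,x}$, so the claimed bound is $\pars{1+x^2}^2/\pars{1+\abs{x}}^2<1$ for all $x\in\pars{0,1}$, contradicting the lemma. The correct salvage is $\hat{b}\pars{x}\le s\norm{\Omega^{-1}\vec{B}\pars{x}}_\infty^2$, which your relaxation delivers (one caveat: your parenthetical that the relaxed knapsack value \emph{equals} $s\norm{z}_\infty^2$ is only an upper bound in general, since the top-density item need not exhaust the budget, but this does not affect the conclusion). Note also that the ``optimal'' sampling density $\tilde{w}\propto\hat{b}^{-1}$ built from this lemma later in Section~\ref{sec:sparse} uses the invalid upper bound, so the variation-constant guarantees derived from it are unfounded as stated.
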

\begin{proof}
    Observe that by the Cauchy-Schwarz inequality 
    \begin{equation}
        \norm{\Omega \vec{v}}_{1}
        = \sum_{j\in\operatorname{supp}\pars{\vec{v}}} \abs{\vec{v}_j} \omega_j
        \le \norm{\vec{v}}_{\omega, 0} \norm{\vec{v}}_2 .
    \end{equation}
    Defining the model set
    \begin{equation}
        \mcal{M} := \braces{v\in\mcal{V}_m : \norm{\Omega \vec{v}}_{1} \le \sqrt{s}\norm{\vec{v}}_2}
    \end{equation}
    we have the inclusion $\mcal{M}_{\omega,s}\cap\mcal{V}_m \subseteq \mcal{M}$.
    Since we know that $\hat{b}$ for $\mcal{M}_{\omega,s}\cap\mcal{V}_m$ is bounded by $\hat{b}$ for $\mcal{M}$, we derive an estimate for the larger set.
    
    Recall that
    \begin{equation}
        \hat{b}\pars{x} := \sup_{v\in\mcal{M}} \frac{\vec{v}^\intercal G\pars{x} \vec{v}}{\norm{\vec{v}}_{2}^2} \quad\text{with } G\pars{x} := \vec{B}\pars{x}\vec{B}\pars{x}^\intercal.
    \end{equation}
    Since $\norm{\vec{v}}_2^{-1} \le \sqrt{s}\norm{\Omega \vec{v}}_{1}^{-1}$ for all $v\in\mcal{M}$, we derive the bound
    \begin{equation}
        \hat{b}\pars{x} 
        \le s \sup_{v\in\mcal{M}} \frac{\vec{v}^\intercal G\pars{x} \vec{v}}{\norm{\Omega\vec{v}}_1^2}
        \le s \sup_{\substack{\vec{v}\in\mbb{R}^m \\ \vec{w} = \Omega \vec{v}}} \frac{\vec{w}^\intercal \Omega^{-1}G\pars{x}\Omega^{-1} \vec{w}}{\norm{\vec{w}}_{1}^2}
        = s \frac{\norm{\Omega^{-1}\vec{B}\pars{x}}_2^4}{\norm{\Omega^{-1}\vec{B}\pars{x}}_1^2} .
    \end{equation}
\end{proof}

\begin{figure}
    \centering
    \includegraphics[width=\textwidth]{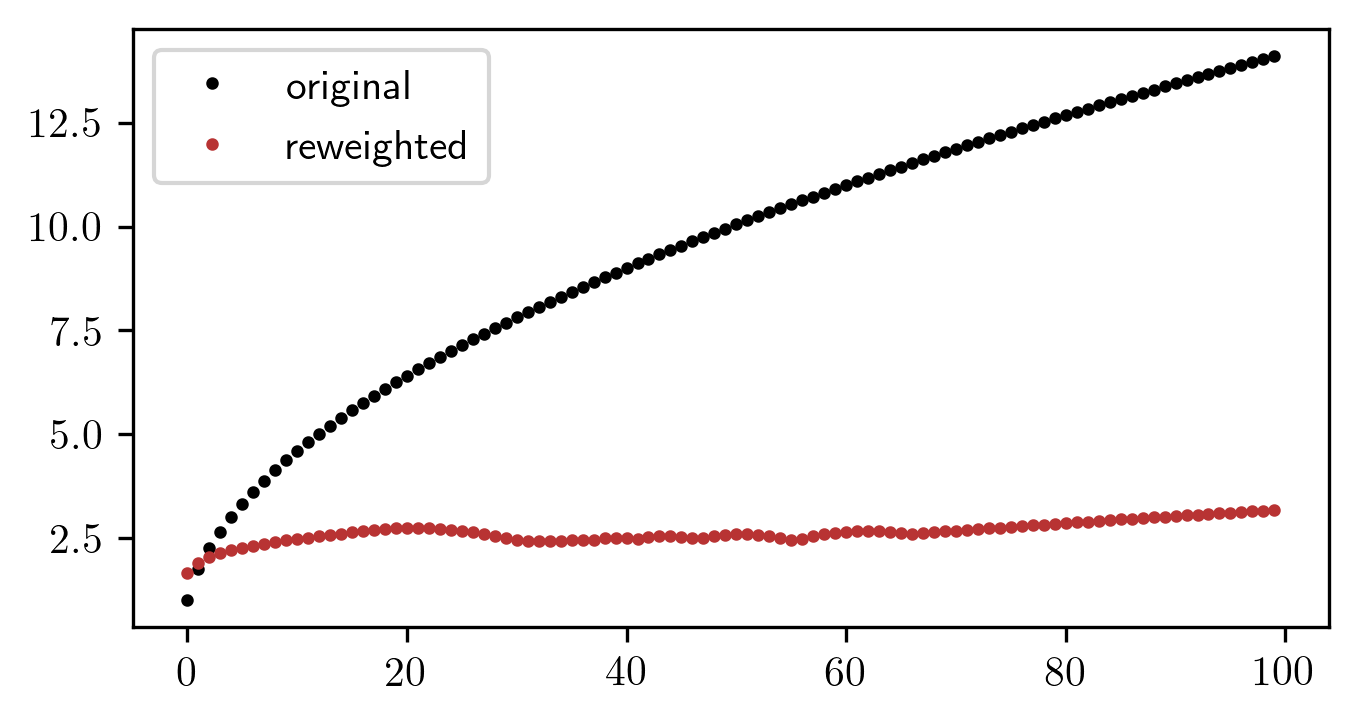}
    \caption{\revision{Let $\braces{\vec{B}_j}_{j\in\mbb{N}}$ be the Legendre polynomial basis on $\mcal{V} = L^2\pars{\bracs{0,1}, \tfrac{\dx{x}}{2}}$. The weight sequences for the original weight function are bounded by $\omega_{j} \ge \norm{\vec{B}_j}_{w, \infty}$ (black).
    The weight sequences for the adapted weight function $\tilde{w}$ (according to Theorem~\ref{thm:pointwise_K} and Lemma~\ref{lem:weighted_sparsity_weight_function}) are bounded by $\omega_{j} \ge \norm{\vec{B}_j}_{\tilde{w}, \infty}$ (red).}}
    \label{fig:weight_sequences}
\end{figure}

\begin{figure}
    \centering
    \begin{subfigure}[b]{0.3\textwidth}
        \centering
        \includegraphics[width=\textwidth]{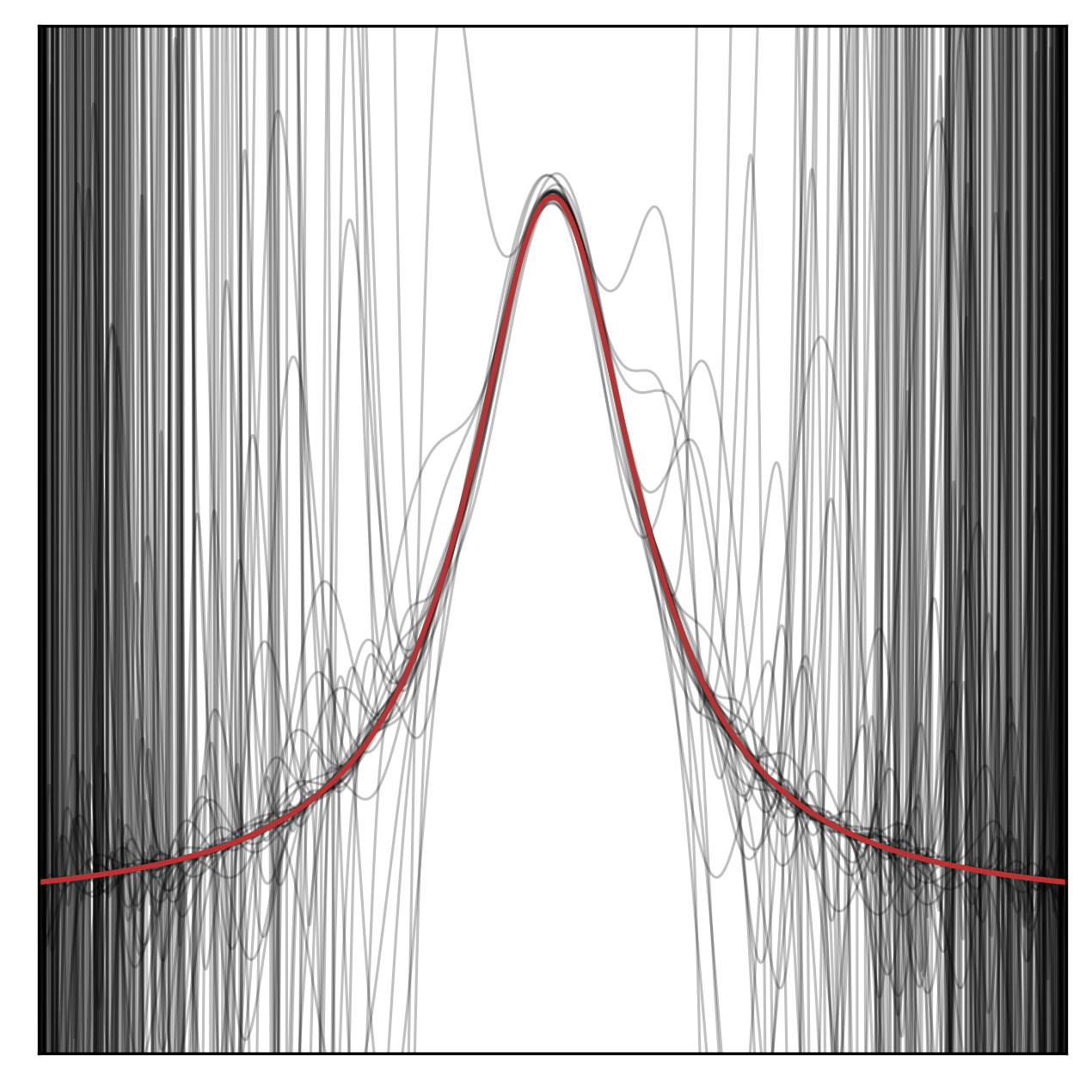}
        \caption{Exact inversion}
        \label{fig:exact_inversion}
    \end{subfigure}
    \hfill
    \begin{subfigure}[b]{0.3\textwidth}
        \centering
        \includegraphics[width=\textwidth]{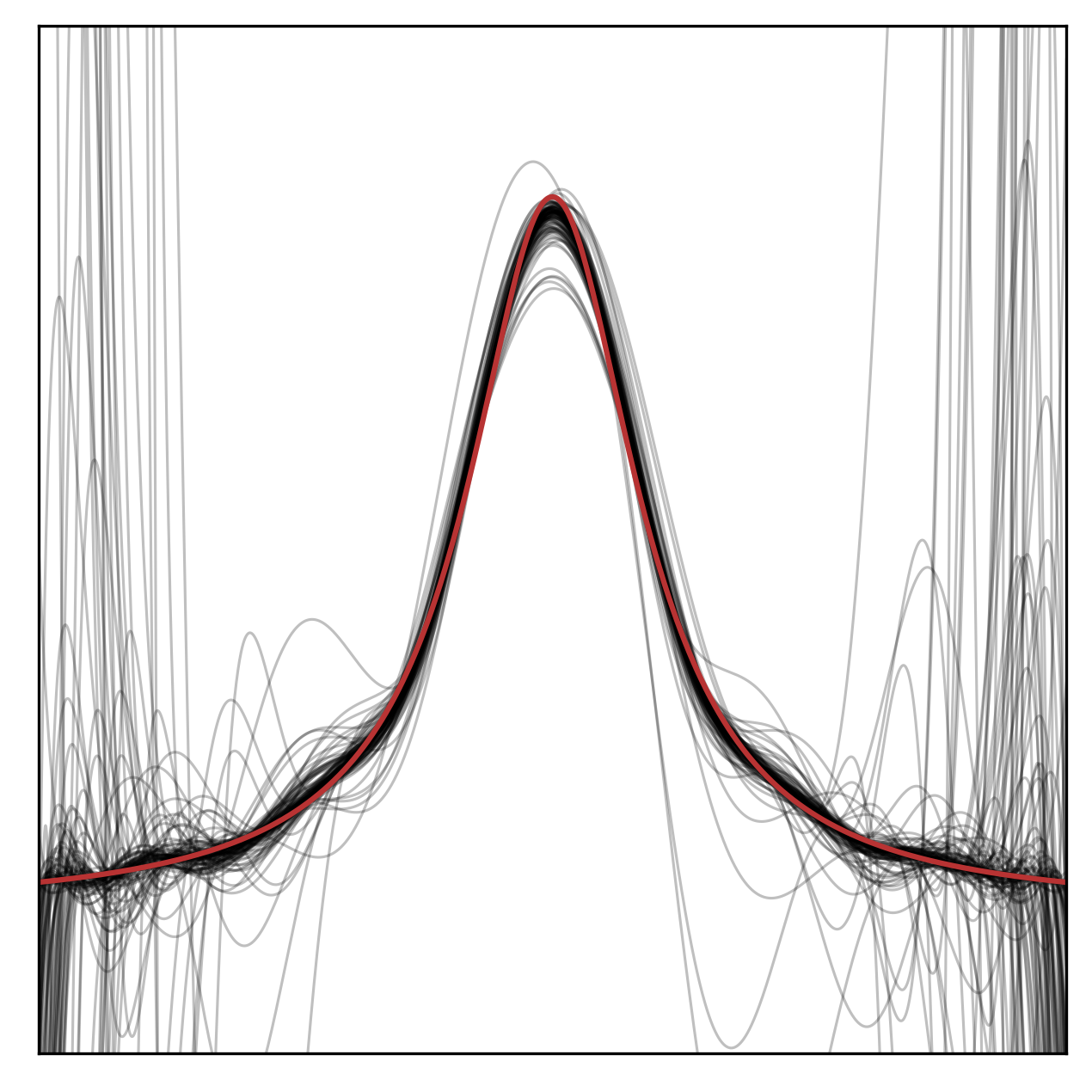}
        \caption{Least squares}
        \label{fig:least_squares}
    \end{subfigure}
    \hfill
    \begin{subfigure}[b]{0.3\textwidth}
        \centering
        \includegraphics[width=\textwidth]{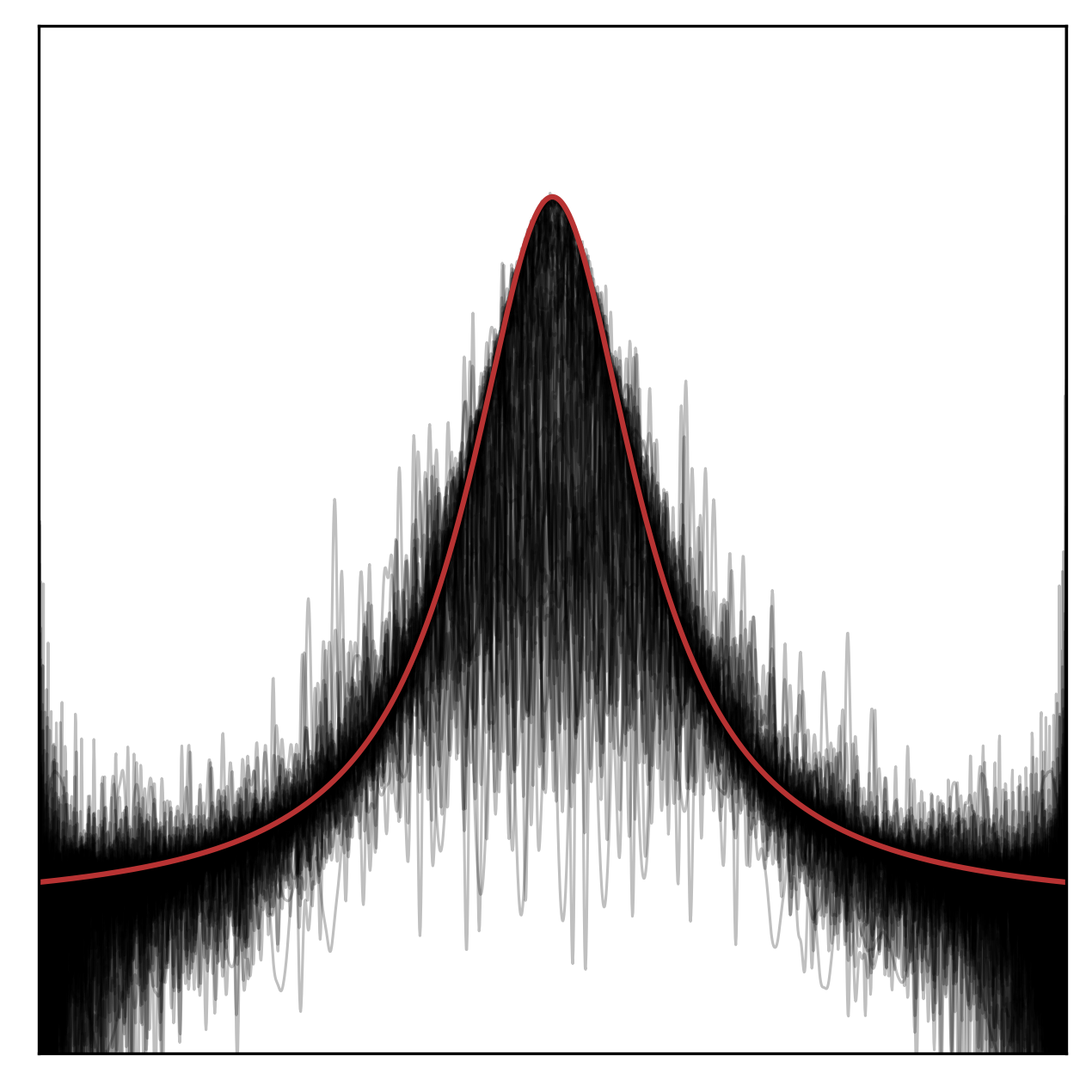}
        \caption{$\ell^1$, uniform $\omega$}
        \label{fig:unweighted_l1}
    \end{subfigure}
    \begin{subfigure}[b]{0.3\textwidth}
        \centering
        \includegraphics[width=\textwidth]{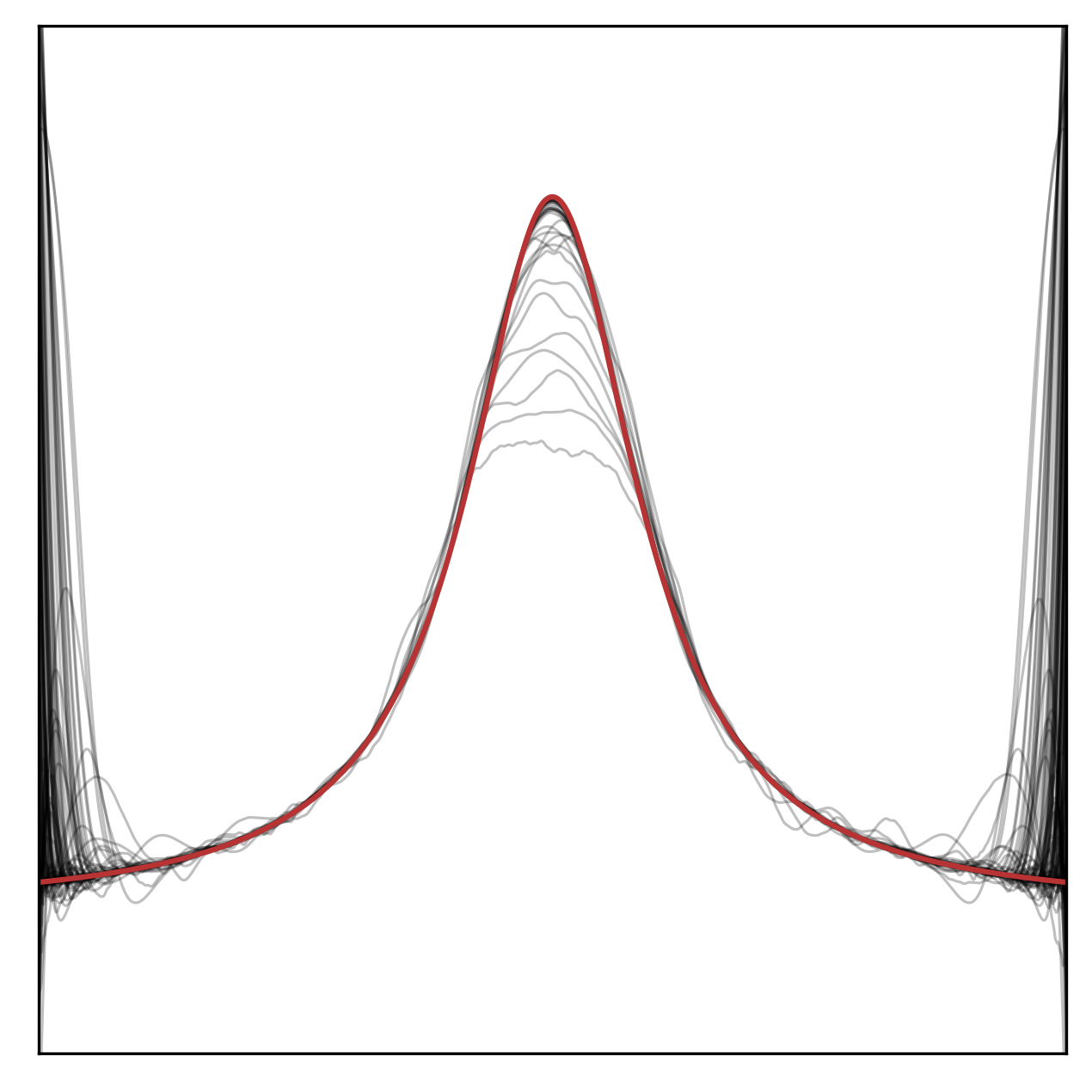}
        \caption{$\ell^1$, original $\omega$\\\phantom{Reweighted}}
        \label{fig:weighted_l1}
    \end{subfigure}
    \hfill
    \begin{subfigure}[b]{0.3\textwidth}
        \centering
        \includegraphics[width=\textwidth]{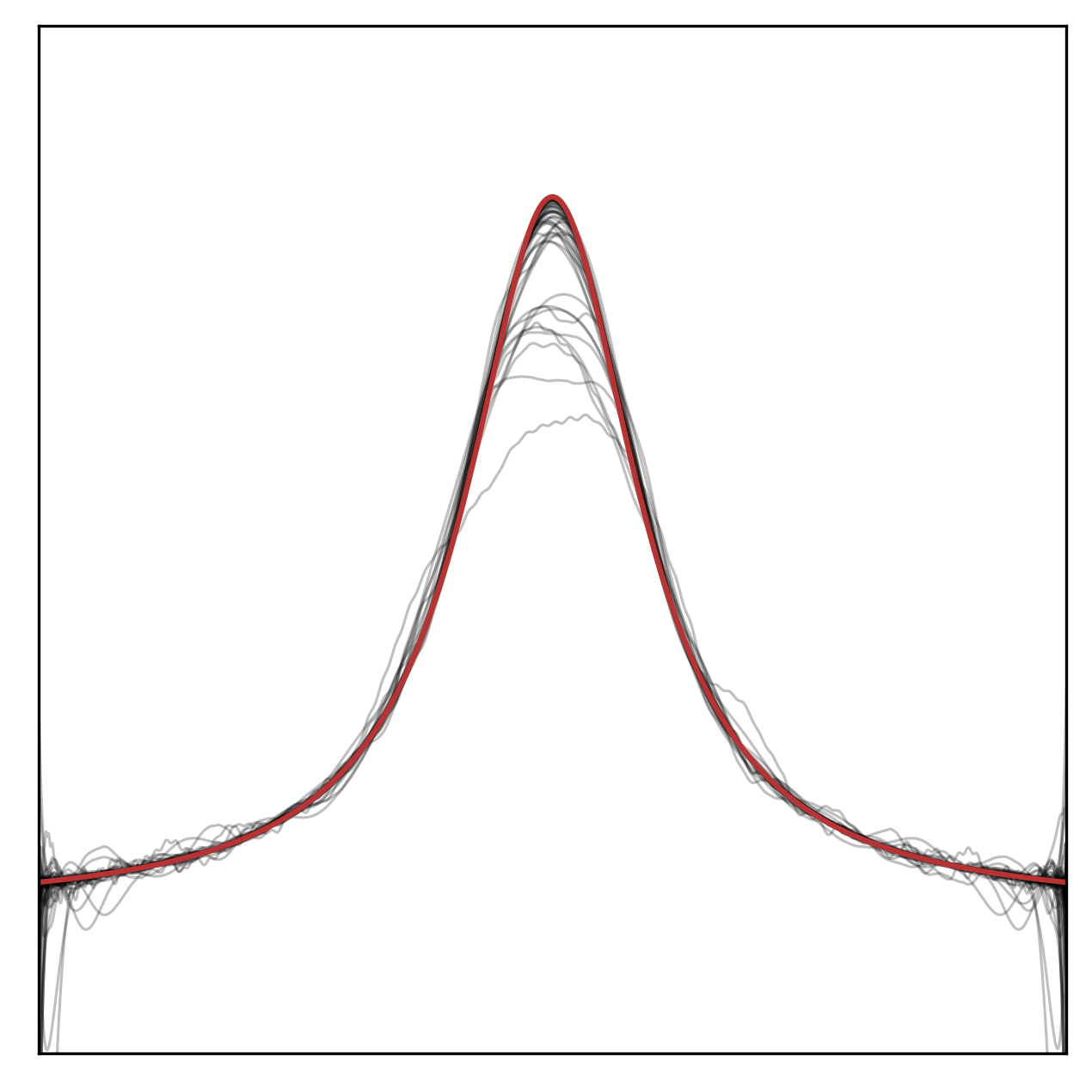}
        \caption{Reweighted $\ell^1$, original $\omega$}
        \label{fig:reweighted_strongly_weighted_l1}
    \end{subfigure}
    \hfill
    \begin{subfigure}[b]{0.3\textwidth}
        \centering
        \includegraphics[width=\textwidth]{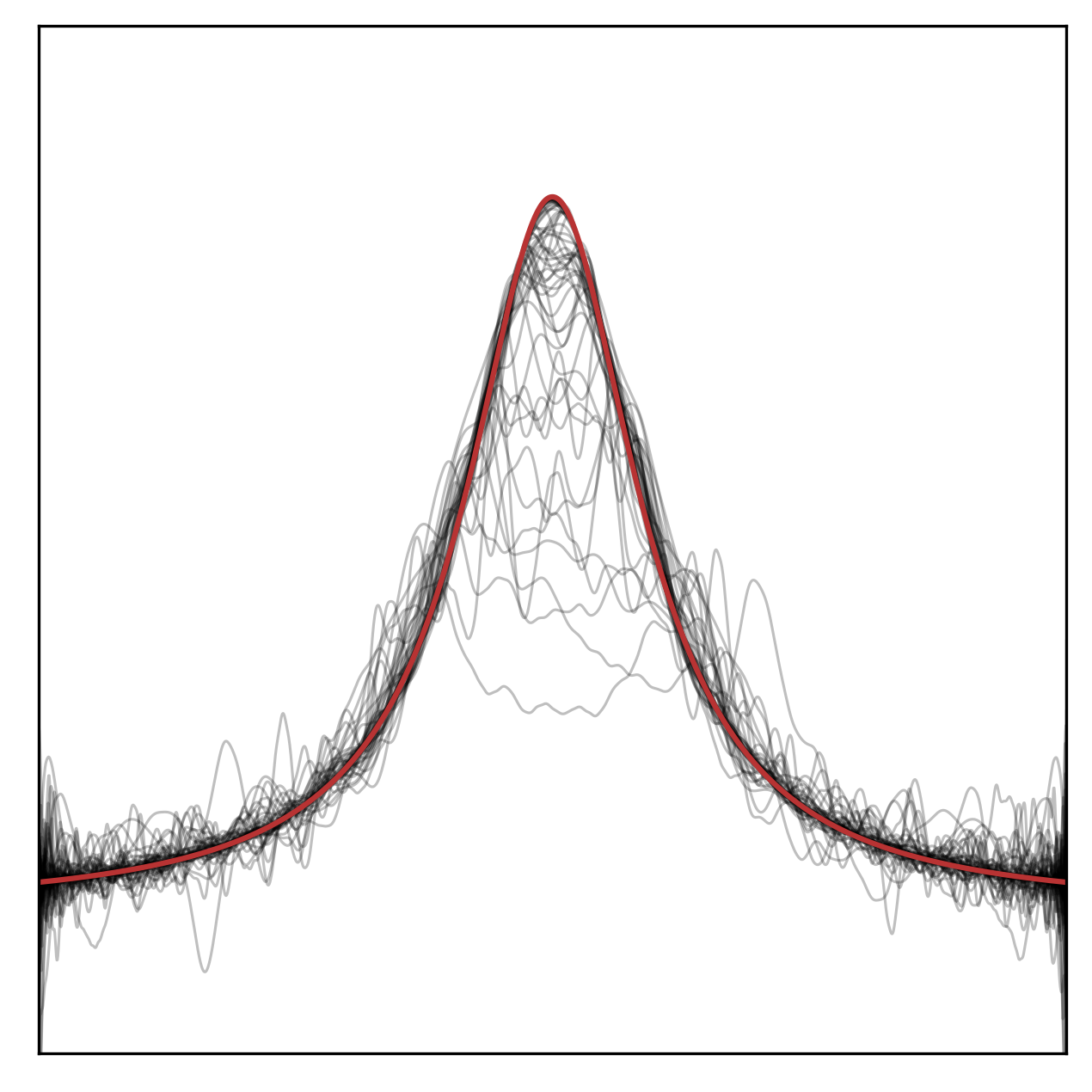}
        \caption{Reweighted $\ell^1$, minimal $\omega$}
        \label{fig:reweighted_weighted_l1}
    \end{subfigure}
    \captionsetup{singlelinecheck=off}
    \caption[.]{Overlaid interpolations of the function $f\pars{x} = \frac{1}{1+25x^2}$ (red) by Legendre polynomials \revision{of degree $99$} using various reconstruction methods.
    Different interpolations correspond to different random draws of $n=30$ sampling points.
    The subfigures~\ref{fig:exact_inversion} and~\ref{fig:least_squares} show \revision{standard} least squares \revision{approximations} of the first $30$ and $15$ basis functions respectively.
    \revision{The other figures employ the weighted $\ell^1$-minimization%
    \begin{displaymath}
        \hspace{-6em}\min_{\vec{v}\in\mbb{R}^{100}}\hspace{0.5em} \norm{\Omega \vec{v}}_1 \quad\text{s.t.}\quad \vec{v}^\intercal\vec{B}\pars{x_i} = y_i \quad \text{ for } 1\le i\le n,
    \end{displaymath}
    with $\Omega := \operatorname{diag}\pars{\omega}$.}
    \ref{fig:unweighted_l1} displays the results of unweighted $\ell^1$-minimization \revision{($\omega_j = 1$)} and \ref{fig:weighted_l1} displays the results of weighted $\ell^1$-minimization using the weight sequence $\omega_j = \norm{\vec{B}_j}_{L^\infty}$.
    In all aforementioned cases the sampling points are drawn according to the uniform measure on $\bracs{-1,1}$.
    The \revision{subplots~\ref{fig:reweighted_strongly_weighted_l1} and~\ref{fig:reweighted_weighted_l1}} use samples that are drawn according to the optimal sampling density as given in Lemma~\ref{lem:weighted_sparsity_weight_function}.
    \ref{fig:reweighted_strongly_weighted_l1} uses the original sequence $\omega_j = \norm{\vec{B}_j}_{L^\infty}$ while~\ref{fig:reweighted_weighted_l1} uses the minimal possible weight sequence $\omega_j = \norm{\vec{B}_j}_{\tilde{w},\infty} \lesssim \norm{\vec{B}_j}_{L^\infty}$.
    }
    \label{fig:weighted_l1_comparison}
\end{figure}

The theory presented in this subsection can be generalized easily to dictionary learning (cf.~\cite{Du2019dictionary_learning_chapter,jung2015dictionary_learning}).
This is stated without proof in the following theorem.
\begin{theorem}
    Assume that $\braces{\vec{B}_j}_{j\in\mbb{N}}$ is a Riesz sequence satisfying
    \begin{equation}
        c\norm{\vec{v}}_2^2 \le \norm*{\sum_{j\in\mbb{N}} \vec{v}_j\vec{B}_j}^2 \le C\norm{\vec{v}}_2^2
    \end{equation}
    \revision{and that $\omega$ is chosen such that $\omega_j \ge \norm{\vec{B}_j}$ for all $j$.
    Redefine}
    \begin{equation}
        \mcal{M}_{\omega,s} := \braces{v\in\mcal{V} : \exists\vec{v} \ \text{ s.t. }\ v = \sum_{j=1}^\infty \vec{v}_j\vec{B}_j \wedge \norm{\vec{v}}_{\omega,0} \le s}
    \end{equation}
    \revision{and let} $\mcal{V}_m\subset\mathcal V$ be an $m$-dimensional subspace spanned by a subset of $\braces{\vec{B}_j}_{j\in\mbb{N}}$.
    Then it holds that
    \begin{itemize}
        \item $\mcal{M}_{\omega, s} \subseteq \mcal{M}_{\omega, t}$ for $s\le t$,
        \item $\mcal{M}_{\omega,s} = -\mcal{M}_{\omega,s}$,
        \item $\mcal{M}_{\omega,s}+\mcal{M}_{\omega,t} \subseteq \mcal{M}_{\omega,s+t}$,
        \item $\norm{v}_{w,\infty} \le \frac{\sqrt{s}}{c}\norm{v}$ for all $v\in\mcal{M}_{\omega, s}$ and
        \item $\nu_{\norm{\bullet}_{w,\infty}}\pars{U\pars{\mcal{M}_{\omega,s}\cap \mcal{V}_m}, r} \revision{\le} \pars*{\tfrac{\revision{2k}Cm}{r\sqrt{s}}}^{s}$ \revision{for some $k>0$}.
    \end{itemize}
\end{theorem}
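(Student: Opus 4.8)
The plan is to mirror the proofs of Lemma~\ref{lem:weighted_sparsity_properties} and Lemma~\ref{lem:weighted_sparsity_covering}, replacing the Parseval identity that was available for an orthonormal basis by the lower and upper Riesz frame bounds. Since a Riesz sequence is linearly independent, every element of its closed span has a \emph{unique} coefficient vector $\vec{v}$, so $\norm{\vec{v}}_{\omega,0}$ is well-defined and the first three assertions are purely combinatorial: monotonicity in $s$ is immediate from $\norm{\vec{v}}_{\omega,0}\le s\le t$; symmetry follows from $\operatorname{supp}\pars{-\vec{v}}=\operatorname{supp}\pars{\vec{v}}$; and the sum property follows from $\operatorname{supp}\pars{\vec{v}+\vec{u}}\subseteq\operatorname{supp}\pars{\vec{v}}\cup\operatorname{supp}\pars{\vec{u}}$ together with the subadditivity $\omega\pars{S\cup T}\le\omega\pars{S}+\omega\pars{T}$ of the weighted cardinality. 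None of these three use the Riesz property.

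For the $L^\infty$ estimate I would repeat the computation of Lemma~\ref{lem:weighted_sparsity_properties} verbatim up to the line
\begin{equation}
    \norm{v}_{w,\infty} \le \norm{\vec{v}}_2 \sqrt{\norm{\vec{v}}_{\omega,0}} \le \sqrt{s}\,\norm{\vec{v}}_2 ,
\end{equation}
which only uses the triangle inequality, the assumption $\omega_j\ge\norm{\vec{B}_j}_{w,\infty}$ and Cauchy--Schwarz. The single new ingredient is that Parseval's identity $\norm{\vec{v}}_2=\norm{v}$ is no longer available; instead the lower Riesz bound $c\norm{\vec{v}}_2^2\le\norm{v}^2$ bounds $\norm{\vec{v}}_2$ by a multiple of $\norm{v}$ and converts the right-hand side into the claimed fourth assertion.

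For the covering number I would follow the two-step argument of Lemma~\ref{lem:weighted_sparsity_covering}. In the first step I pass from a $\norm{\bullet}$-covering to a $\norm{\bullet}_{w,\infty}$-covering: a difference of two elements of $U\pars{\mcal{M}_{\omega,s}\cap\mcal{V}_m}$ lies in $\mcal{M}_{\omega,2s}$ by the sum property just proved, so the $L^\infty$ estimate (applied with $2s$) bounds its $\norm{\bullet}_{w,\infty}$-norm by a constant multiple of its $\norm{\bullet}$-norm, and a $\norm{\bullet}$-covering of suitably rescaled radius yields the required $\norm{\bullet}_{w,\infty}$-covering. In the second step I transport the problem to $\mbb{R}^m$ and invoke the sparse-vector covering bound of~\cite{vershynin2009sparsity}. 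The crucial difference from the orthonormal case is that $\pars{\mcal{V}_m,\norm{\bullet}}$ is no longer isometric to $\pars{\mbb{R}^m,\norm{\bullet}_2}$ but only equivalent with constants $\sqrt{c}$ and $\sqrt{C}$; consequently the image of $U\pars{\mcal{M}_{\omega,s}\cap\mcal{V}_m}$ is not the unit sphere of sparse vectors but an annular shell $\tfrac{1}{\sqrt{C}}\le\norm{\vec{v}}_2\le\tfrac{1}{\sqrt{c}}$ intersected with a bounded-cardinality sparse set. I expect this transfer to be the main obstacle: one must cover a shell rather than a sphere, track how the covering radius rescales under the bi-Lipschitz equivalence, and bound the ambient cardinality of the support (using that $\omega$ is bounded below) so that the exponent collapses to $s$ as in the orthonormal case. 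All the emerging factors of $c$, $C$ and the flooring of the sparsity level are constants independent of $m$, $r$ and $s$ and are absorbed into the single constant $k$ appearing in the claimed bound.
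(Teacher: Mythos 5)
Since the paper introduces this theorem with the sentence ``This is stated without proof in the following theorem,'' there is no proof of record to compare against; your plan of rerunning Lemmas~\ref{lem:weighted_sparsity_properties} and~\ref{lem:weighted_sparsity_covering} with the Riesz bounds in place of Parseval is clearly the intended route, and your treatment of the first three bullets (uniqueness of coefficients for a Riesz sequence, support combinatorics, subadditivity of $\omega$) is correct.

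There are, however, three concrete frictions between your plan and the literal statement, and you gloss over all of them. (i)~The theorem assumes only $\omega_j \ge \norm{\vec{B}_j}$, whereas your $L^\infty$ estimate uses $\omega_j \ge \norm{\vec{B}_j}_{w,\infty}$ in the step $\sum_j \abs{\vec{v}_j}\norm{\vec{B}_j}_{w,\infty} \le \sum_j \abs{\vec{v}_j}\omega_j$; since $\norm{\vec{B}_j}_{w,\infty}$ can exceed $\norm{\vec{B}_j}$ by an arbitrary factor, the fourth bullet cannot be derived from the stated hypothesis, and you should either flag that you are strengthening it or absorb $\sup_j \norm{\vec{B}_j}_{w,\infty}/\omega_j$ into the constant. (ii)~Your computation yields $\norm{v}_{w,\infty} \le \sqrt{s/c}\,\norm{v}$, \emph{not} the claimed $\frac{\sqrt{s}}{c}\norm{v}$; the claimed form follows only when $c \le 1$, and for $c>1$ it is simply false: take $\vec{B}_j = 2\vec{E}_j$ with $\braces{\vec{E}_j}$ the Fourier basis of $L^2\pars{\bracs{-1,1},\tfrac{\dx{x}}{2};\mbb{C}}$, $w\equiv 1$, $\omega_j = 2$ and $v = \vec{B}_1$, so that $c = 4$, $s = 4$, $\norm{v}_{w,\infty} = 2$ but $\frac{\sqrt{s}}{c}\norm{v} = 1$. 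So saying the lower Riesz bound ``converts the right-hand side into the claimed fourth assertion'' is inaccurate; your argument proves the corrected statement with $\sqrt{c}$ in the denominator. (iii)~In the covering argument, the lower bound $\omega_j \ge \norm{\vec{B}_j} \ge \sqrt{c}$ gives $\abs{\operatorname{supp}\pars{\vec{v}}} \le \floor{s/c}$, so the exponent produced by the sparse-vector bound of~\cite{vershynin2009sparsity} is $\floor{s/c}$, not $s$; an \emph{exponent} cannot be ``absorbed into the single constant $k$'' as you assert. For $c \ge 1$ the collapse $\floor{s/c} \le s$ works provided the base is at least $1$ (which should be said), while for $c < 1$ your route genuinely yields the larger exponent $s/c$ and the claimed form does not follow. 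By contrast, the shell-versus-sphere issue you single out as ``the main obstacle'' is harmless: one simply covers the ball of radius $1/\sqrt{c}$ support by support at a constant multiplicative cost per coordinate.
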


\subsection{Tensors of rank \texorpdfstring{$r$}{r}} \label{sec:cp-tensors}

\revision{We now consider two different problems related to sets of low-rank tensors.
Both cases can be expressed with $Y = \pars{\mbb{R}^{m}}^{\otimes M}$ and $\mcal{V} = \pars{Y, \norm{\bullet}}$ with $\abs{v}_y := \abs{\pars{v, y}_{\mathrm{Fro}}}$.
The only difference is the distribution $\rho$ from which the samples are drawn.
{
\renewcommand{\labelenumi}{(\arabic{enumi})}
\begin{enumerate}
    \item \textit{Recovery from Gausian samples.}
    In this problem $\rho = \mcal{N}\pars{0_Y, \operatorname{Id}_Y}$ is a Gaussian distribution on the entire space $Y$.
    Although this problem is rather artificial it was one of the first where rigorous bounds were developed in~\cite{RAUHUT2017220}.
    \item \textit{Recovery from rank-$1$ samples and completion.}
    For this problem let $\braces{\rho_k}_{k\in\bracs{M}}$ be distributions on $\mbb{R}^m$ and consider $\rho = \rho_1\otimes\cdots\otimes\rho_M$.
    This problem occurs for example whenever one tries to approximate a low-rank function of $M$ variables using a tensor product basis.
    A special case of this setting is the problem of tensor completion where a tensor has to be  recovered from a few entries.
    In this problem all distributions $\rho_k$ have to be discrete measures on the standard basis vectors.
\end{enumerate}
}
In both problems the task is to find a best approximation in a subset $\mcal{T}_r\subseteq\mcal{V}$ of bounded rank $r$.
For tensors however there exist many different concepts of rank for which we refer to~\cite{bachmayr-schneider,hackbusch2012tensorBuch,grasedyck2011introduction,hitchcock1927cp_format} and the works cited below.
}

\revision{
\subsubsection*{Recovery from Gaussian samples}

In this section we consider a subset $\mcal{T}_r\subseteq\mcal{V}$ of tensors of bounded (Hierarchical Tucker) HT-rank $r$.
For $w\equiv 1$ the following bound for the sample complexity subject to $\delta$ is given in~\cite[Theorem 2]{RAUHUT2017220},
\begin{equation}
    n \gtrsim \max\braces{\pars{\pars{M-1}r^3+Mmr}\ln\pars{Mr}, \ln\pars{p^{-1}}}\delta^{-2} .
\end{equation}

To obtain a sample bound from our theory, we would have to bound the variation constant, which however is infinity,
\begin{equation}
    K\pars{U\pars{\mcal{T}_r}}
    = \sup_{\substack{v\in\mcal{T}_r\\\norm{v}=1}} \esssup_{y\in Y} \abs{\pars{v, y}_{\mathrm{Fro}}}^2
    = \infty .
\end{equation}
This shows that a direct application of the presented formalism to this problem cannot provide a finite sample complexity.

\begin{remark}
    As above, this exposes the lacking sharpness of the results used in the proof of Theorem~\ref{thm:rip}.
    With more refined concentration inequalities as in~\cite{dirksen2015generic_chaining}, a different definition of the variation constant would emerge (replacing $\norm{\bullet}_{w,\infty}$ by a sub-Gaussian norm), which would be finite for this problem.
\end{remark}
}

\revision{
The present theory can deal with this problem in two different ways.
The first option is to choose the weight function $w\pars{y} = m^M\norm{y}_{\mathrm{Fro}}^{-2}$, which yields the variation constant
\begin{equation}
    K\pars{U\pars{\mcal{T}_r}}
    = m^M \sup_{\substack{v\in\mcal{T}_r\\\norm{v}=1}} \esssup_{\substack{y\in Y\\\norm{y}=1}} \abs{\pars{v, y}_{\mathrm{Fro}}}^2
    = m^M,
\end{equation}
where the final equality holds since $\norm{\bullet} = \norm{\bullet}_{\mathrm{Fro}}$.
The second option is to normalize the samples and thereby replace the Gaussian distribution by a uniform distribution on the unit sphere.
In this case we obtain the new identity $\norm{\bullet} = m^{-M/2}\norm{\bullet}_{\mathrm{Fro}}$ and the corresponding variation constant
\begin{equation}
    K\pars{U\pars{\mcal{T}_r}}
    = m^M \sup_{\substack{v\in\mcal{T}_r\\\norm{v}=1}} \esssup_{\substack{y\in Y\\\norm{y}=1}} \abs{\pars{v, y}_{\mathrm{Fro}}}^2
    = m^M .
\end{equation}
In both cases $K\pars{U\pars{\mcal{T}_r}} = K\pars{U\pars{\mcal{V}}}$.

Let $k = \sqrt{K\pars{U\pars{\mcal{T}_{2r}}}}$.
By using the bound $\norm{\bullet}_{w,\infty} \le k\norm{\bullet}$ on $\mcal{T}_{2r}$ we can utilize the bound for the covering number for tensors of HT-rank $r$ that is provided in~\cite{RAUHUT2017220}.
This leads to the estimate
\begin{equation}
    \nu_{\norm{\bullet}_{w,\infty}}\pars{U\pars{\mcal{T}_r}, \varepsilon}
    \le \nu_{\norm{\bullet}}\pars*{U\pars{\mcal{T}_r}, \frac{\varepsilon}{k}}
    \le \pars*{\frac{\varepsilon}{3\pars{2M-1}\sqrt{r}k}}^{-\pars{Mr^3 + Mmr}} .
\end{equation}
A subsequent application of Corollary~\ref{cor:sample_size_rip} yields
\begin{equation}
    n \le 2 \pars*{\pars{Mr^3 + Mmr}\ln\pars{3\pars{2M-1}\sqrt{r}k\delta^{-1}} - \ln\pars*{\frac{p}{2}}}\pars*{\frac{k^2}{\delta}}^2 .
\end{equation}
For $k=1$ this has the same asymptotic complexity as the bound in~\cite{RAUHUT2017220}.
We conjecture that the transition $k=m^{M/2} \leadsto k=1$ can be achieved by using a generic chaining argument (cf.~\cite{dirksen2015generic_chaining}) rather than a simple Hoeffding bound in the proof of Theorem~\ref{thm:rip}.
}

\revision{
\subsubsection*{Recovery from rank-$1$ samples and completion}

In this section we consider subsets $\mcal{T}_r\subseteq\mcal{V}$ of generic rank-$r$ tensors but assume that the rank concept satisfies $\mcal{T}_1\subseteq\mcal{T}_r$.
This is the case for all tree-shaped tensor formats including the Tucker format, the tensor train (TT) format and general hierarchical tensor formats (HT) as well as the canonical polyadic decomposition (CP).
For the sake of completeness we define
\begin{equation}
    \mcal{T}_1 := \braces{v\in\mcal{V} : \vec{v} = \vec{v}_1\otimes\cdots\otimes\vec{v}_M \text{ with } \vec{v}_1,\ldots,\vec{v}_M\in\mbb{R}^m} .
\end{equation}
The variation constant for the set $\mcal{T}_r$ is computed in the next theorem.

\begin{theorem} \label{thm:K_tensor}
    Assume that $y$ has rank $1$ almost surely.
    Then $K\pars{U\pars{\mcal{T}_r}} = K\pars{U\pars{\mcal{V}}}$.
\end{theorem}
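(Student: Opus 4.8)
The plan is to reduce the equality to a pointwise statement and then exploit that the sampling measure charges only rank-$1$ tensors. The inclusion $\mcal{T}_r\subseteq\mcal{V}$ gives $U\pars{\mcal{T}_r}\subseteq U\pars{\mcal{V}}$, hence the trivial bound $K\pars{U\pars{\mcal{T}_r}}\le K\pars{U\pars{\mcal{V}}}$, so only the reverse inequality needs proof. By Theorem~\ref{thm:pointwise_K} I would write $K\pars{U\pars{A}}=\norm{w\hat b_A}_{L^\infty\pars{Y,\rho}}$ for $A\in\braces{\mcal{T}_r,\mcal{V}}$, with $\hat b_A\pars{y}:=\sup_{v\in A,\,\norm{v}=1}\abs{\pars{v,y}_{\mathrm{Fro}}}^2$. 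Since the right-hand side is an essential supremum with respect to $\rho$ and $y$ has rank $1$ almost surely, it suffices to show $\hat b_{\mcal{T}_r}\pars{y}=\hat b_{\mcal{V}}\pars{y}$ for every rank-$1$ tensor $y=y_1\otimes\cdots\otimes y_M$.

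For such a $y$ the quantity $\hat b_{\mcal{V}}\pars{y}$ is the squared dual norm of the functional $v\mapsto\pars{v,y}_{\mathrm{Fro}}$, attained at its Riesz representative. Here I would use both hypotheses of this section: as $\rho=\rho_1\otimes\cdots\otimes\rho_M$ is a product measure whose samples have rank $1$, the Gram operator $G$ determined by $\pars{u,Gv}_{\mathrm{Fro}}=\int_Y\pars{u,z}_{\mathrm{Fro}}\pars{v,z}_{\mathrm{Fro}}\dx{\rho}\pars{z}$ factorizes as the Kronecker product $G=M_1\otimes\cdots\otimes M_M$ with $M_k:=\int_{\mbb{R}^m}zz^\intercal\dx{\rho_k}\pars{z}$, so that $\norm{v}^2=\pars{v,Gv}_{\mathrm{Fro}}$. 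The maximizer $v^{*}\propto G^{-1}y=\pars{M_1^{-1}y_1}\otimes\cdots\otimes\pars{M_M^{-1}y_M}$ is therefore again rank $1$. Equivalently, and without inverting $G$, one checks that for rank-$1$ $v=v_1\otimes\cdots\otimes v_M$ the Rayleigh quotient separates, $\abs{\pars{v,y}_{\mathrm{Fro}}}^2\norm{v}^{-2}=\prod_{k=1}^M\abs{\pars{v_k,y_k}}^2\pars{v_k,M_k v_k}^{-1}$, so that the supremum over rank-$1$ tensors factors into $M$ one-dimensional problems whose product already equals $\pars{y,G^{-1}y}_{\mathrm{Fro}}=\hat b_{\mcal{V}}\pars{y}$. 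Since $v^{*}$ has rank $1$ it lies in $\mcal{T}_1\subseteq\mcal{T}_r$ — this is where the standing assumption $\mcal{T}_1\subseteq\mcal{T}_r$ enters — and after normalization $v^{*}/\norm{v^{*}}\in U\pars{\mcal{T}_r}$ attains the unconstrained supremum, giving $\hat b_{\mcal{T}_r}\pars{y}=\hat b_{\mcal{V}}\pars{y}$.

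Because this holds for every rank-$1$ $y$, hence for $\rho$-almost every $y$, the two essential suprema coincide and $K\pars{U\pars{\mcal{T}_r}}=K\pars{U\pars{\mcal{V}}}$ follows. The crux of the argument is the rank-$1$ maximizer claim, namely that $G^{-1}$ preserves rank-$1$ tensors; this is exactly where the product structure of $\rho$ is indispensable. For a general measure supported on rank-$1$ tensors the Gram operator need not be a Kronecker product, $G^{-1}y$ may have full rank for rank-$1$ $y$, and one can then engineer a weight function $w$ for which $K\pars{U\pars{\mcal{T}_r}}<K\pars{U\pars{\mcal{V}}}$. Thus the identity genuinely relies on the samples being drawn from a product measure, not merely on their having rank $1$.
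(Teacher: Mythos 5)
Your proof is correct, and it shares the paper's skeleton --- reduce to the pointwise functions $\hat{b}$ via Theorem~\ref{thm:pointwise_K}, exhibit a rank-$1$ maximizer so that $\hat{b}_{\mcal{T}_1} = \hat{b}_{\mcal{V}}$ on the ($\rho$-full) set of rank-$1$ tensors, then sandwich via $\mcal{T}_1 \subseteq \mcal{T}_r \subseteq \mcal{V}$ --- but your key step is genuinely different. The paper simply declares the maximizer of $v \mapsto \abs{\pars{v,y}_{\mathrm{Fro}}}$ over the $\norm{\bullet}$-unit sphere to be $y/\norm{y}$, yielding $\hat{b}_{\mcal{V}}\pars{y} = \norm{y}_{\mathrm{Fro}}^4/\norm{y}^2$; this is legitimate exactly when $y$ is an eigenvector of the Gram operator $G$, in particular when $\norm{\bullet}$ is a multiple of $\norm{\bullet}_{\mathrm{Fro}}$ --- an isometry the paper does invoke later in the same section (``the isometry $\norm{\bullet} = \norm{\bullet}_{\mathrm{Fro}}$''). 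You instead identify the true maximizer $v^{*} \propto G^{-1}y$ as the normalized Riesz representative and use the Kronecker factorization $G = M_1 \otimes \cdots \otimes M_M$ forced by the product measure to conclude that $v^{*}$ is again rank $1$, so that $\hat{b}_{\mcal{T}_1}\pars{y} = \pars{y, G^{-1}y}_{\mathrm{Fro}} = \hat{b}_{\mcal{V}}\pars{y}$; your separated Rayleigh-quotient computation confirms this without inverting $G$. What each approach buys: the paper's argument is shorter and suffices for the completion and normalized-sampling settings it actually uses, where $G \propto \operatorname{Id}$; yours covers anisotropic marginals $\rho_k$, where the paper's value for $\hat{b}_{\mcal{V}}$ would undershoot the true value $\pars{y, G^{-1}y}_{\mathrm{Fro}}$ by Cauchy--Schwarz, and your closing observation --- that rank-$1$ support alone is not what drives the identity, the product structure is --- is a worthwhile sharpening of the theorem's hypothesis as stated. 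Two points you should make explicit: you tacitly assume each $M_k$ (hence $G$) is nonsingular, which is harmless since $G \succ 0$ is equivalent to $\norm{\bullet}$ being a norm rather than a seminorm on $Y$; and the final claim that for non-product rank-$1$-supported measures one can engineer $w$ with $K\pars{U\pars{\mcal{T}_r}} < K\pars{U\pars{\mcal{V}}}$ is asserted rather than proved, though it is only a side remark about sharpness and not load-bearing for the theorem.
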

\begin{proof}
    Observe that
    \begin{align}
        \hat{b}_{\mcal{V}}\pars{y}
        &:= \sup_{\substack{v\in\mcal{V}\\\norm{v}=1}} \abs{\pars{v,y}_{\mathrm{Fro}}}^2
        = \abs*{\pars*{\tfrac{y}{\norm{y}},y}_{\mathrm{Fro}}}^2
        = \frac{\norm{y}_{\mathrm{Fro}}^4}{\norm{y}^2}. \\
    \intertext{Since $y$ has rank $1$,}
        \hat{b}_{\mcal{T}_1}\pars{y}
        &:= \sup_{\substack{v\in\mcal{T}_1\\\norm{v}=1}} \abs{\pars{v,y}_{\mathrm{Fro}}}^2
        = \abs*{\pars*{\tfrac{y}{\norm{y}},y}_{\mathrm{Fro}}}^2
        = \frac{\norm{y}_{\mathrm{Fro}}^4}{\norm{y}^2}.
    \end{align}
    We deduce that $K\pars{U\pars{\mcal{T}_1}} = K\pars{U\pars{\mcal{V}}}$ by Theorem~\ref{thm:pointwise_K}.
    This proves the assertion since $\mcal{T}_1 \subseteq \mcal{T}_r \subseteq \mcal{V}_m^{\otimes M}$ implies $K\pars{U\pars{\mcal{T}_1}} \le K\pars{U\pars{\mcal{T}_r}} \le K\pars{U\pars{\mcal{V}}}$.
\end{proof}
The theorem states that tensor formats do \textbf{not} exhibit a smaller variation constant than the linear space they are embedded in.
This result is surprising at first because tensor formats have a significantly smaller covering number than the full tensor space, cf.~\cite{RAUHUT2017220}.
However, this is already indicated by the classical analysis of matrix completion from which it is known that the notion of incoherence is required in addition to a low-rank property.
}

\revision{
Despite this unfavourable result, it is noteworthy that the present theory can be used in this setting.
The bound $\norm{\bullet}_{w,\infty} \le \sqrt{K\pars{U\pars{\mcal{T}_{2r}}}}\norm{\bullet}$ and the isometry $\norm{\bullet} = \norm{\bullet}_{\mathrm{Fro}}$ imply
\begin{equation}
    \nu_{\norm{\bullet}_{w,\infty}}\pars{U\pars{\mcal{T}_r}, \varepsilon}
    \le \nu_{\norm{\bullet}_{\mathrm{Fro}}}\pars*{U\pars{\mcal{T}_r}, \frac{\varepsilon}{\sqrt{K\pars{U\pars{\mcal{T}_{2r}}}}}} .
\end{equation}
Assuming the weight function $w$ is chosen optimally, we know from Theorem~\ref{thm:K_tensor} and Section~\ref{sec:linear} that $K\pars{U\pars{\mcal{T}_{2r}}} = m^M$.
We can now apply the bound for the covering number of tensors of HT-rank $r$ from~\cite{RAUHUT2017220}.
The resulting estimate reads
\begin{equation}
    \nu_{\norm{\bullet}_{w,\infty}}\pars{U\pars{\mcal{T}_r}, \varepsilon}
    \le \pars*{\frac{\varepsilon}{3\pars{2M-1}\sqrt{rm^M}}}^{-\pars{Mr^3 + Mmr}} .
\end{equation}
A final application of Corollary~\ref{cor:sample_size_rip} yields
\begin{equation}
    n \le 2 \pars*{\pars{Mr^3 + Mmr}\ln\pars{3\pars{2M-1}\sqrt{r m^M}\delta^{-1}} - \ln\pars*{\frac{p}{2}}}\pars*{\frac{m^M}{\delta}}^2 .
\end{equation}
To the knowledge of the authors this is the first estimate of the number of samples that are necessary to satisfy \rip{\mcal{T}_r}{\delta} in this setting. %
Note that this is a worst-case estimate and that significantly less samples are needed in practice (cf.~\cite{ESTW19}).
}

\revision{
In the following examples we discuss the application to two common classes of problems.
\begin{example} \label{ex:rk1-measurements}
    In this example we consider the problem of recovering the low-rank coefficient tensor of a function from samples.
    Let $\pi_m$ be a probability measure on $Z_m$ and $\mcal{W}_m \subseteq L^2\pars{Z_m,\pi_m}$ be spanned by the $d_m$ \emph{orthonormal} basis functions $\braces{\vec{B}_{m,j}}_{j\in\bracs{d_m}}$.
    Now define the product space $\mcal{W} := \mcal{W}_m^{\otimes M} \subseteq L^2\pars{Z, \pi}$ with $Z := Z_m^{M}$ and $\pi := \pi_m^{\otimes M}$ and endow it with the seminorm $\absDagger{w}_z := \abs{w\pars{z}}$.
    This is the space in which the sought functions will live and it shall be approximated in the norm $\normDagger{\bullet} := \norm{\bullet}_{L^2\pars{Z,\pi}}$.
    As a model class consider the set $\mcal{T}_r^{\mcal{W}}\subseteq\mcal{W}$ of functions with a coefficient tensor of rank $r$ with respect to the tensor product basis $\vec{B}_{\vec{j}}\pars{z} := \prod_{k=1}^M\vec{B}_{m,\vec{j}_k}\pars{z_k}$ and denote this set of coefficient tensors by $\mcal{T}_r^{\mcal{V}} := \mcal{T}_r$.
    For the sake of simplicity, assume that the weight function $w\equiv 1$ is constant.
    
    To compute the variation constant of this model class, recall the definition of $\mcal{V} = \pars{Y, \norm{\bullet}}$, $Y = \pars{\mbb{R}^m}^{\otimes M}$ and $\abs{v}_y = \abs{\pars{v,y}_{\mathrm{Fro}}}$ from above.
    Note that each function $w\in\mcal{W}$ corresponds uniquely to a coefficient tensor $\vec{w}\in\mcal{V}$ and that the mapping $\vec{B} : Z \to Y$ given by $\pars{\vec{B}\pars{z}}_{\vec{j}} := \vec{B}_{\vec{j}}\pars{z}$ induces an isometry of seminorms
    \begin{equation}
        \absDagger{w}_{z} = \abs{w\pars{z}} = \abs{\pars{\vec{w},\vec{B}\pars{z}}_{\mathrm{Fro}}} = \abs{\vec{w}}_{\vec{B}\pars{z}} .
    \end{equation}
    This means that if we choose $\rho$ as the pushforward measure $\rho := B_*\pi$ the isometry of seminorms induces the isometry of the two norms
    \begin{align}
        &\norm{\vec{w}}
        = \pars*{\int_{Y} \abs{\vec{w}}_y^2 \dmx{\rho}{y}}^{1/2}
        = \pars*{\int_{Z} \absDagger{w}_z^2 \dmx{\pi}{z}}^{1/2}
        = \normDagger{w}
    \intertext{and}
        &\norm{\vec{w}}_{1, \infty}
        = \esssup_{y\in Y} \abs{\vec{w}}_y
        = \esssup_{z\in Z} \absDagger{w}_z
        =: \normDagger{w}_{1,\infty} .
    \end{align}
    Together with Theorem~\ref{thm:K_tensor} and Theorem~\ref{thm:pointwise_K} it follows that
    \begin{equation*}
        K\pars{U\pars{\mcal{T}^{\mcal{W}}_r}} 
        = K\pars{U\pars{\mcal{T}^{\mcal{V}}_r}}
        = K\pars{U\pars{\mcal{V}}} 
        \ge m^M .
    \end{equation*}
    This shows that the variation constant for this model class grows exponentially with $M$.
\end{example}
}

\revision{
\begin{example}
    The problem of tensor completion can be considered as a special case of Example~\ref{ex:rk1-measurements}.
    In this setting $Z = \bracs{m}^M$ is the set of all multi-indices, $\pi=\mcal{U}\pars{Z}$ is a uniform distribution on $Z$ and $\mcal{W} = \pars{\pars{\mbb{R}^m}^{\otimes M}, \norm{\bullet}_{\mathrm{Fro}}}$ is endowed with the semi-norm $\absDagger{w}_z := m^M\abs{w_z}$.
    Since this is a special case of Example~\ref{ex:rk1-measurements} the model class of rank-$r$ tensors $\mcal{T}_r$ exhibits the same bound $K\pars{U\pars{\mcal{T}_r}} = K\pars{U\pars{\mcal{W}}} \ge m^M$.
\end{example}
}

\revision{
These two examples show that $K\pars{U\pars{\mcal{T}_r}} \ge m^M$ in important applications.
To reduce the variation constant in these cases we can only intersect $\mcal{T}_r$ with another model class $\mcal{M}$ with low variation constant.
The intersection then inherits the low covering number of $\mcal{T}_r$ and the low variation constant of $\mcal{M}$.
}

\revision{
\section{Dependence on the seminorm}
\label{sec:Hk_seminorm}

Since the definition of the $\norm{\bullet}$-norm is very general, our theory is not limited to the $L^2$-norm but extends to Sobolev or energy norms. 
It is therefore natural to ask how the choice of the semi-norm $\abs{\bullet}_y$ influences the variation constant.
In this section we investigate this influence using Sobolev norms as an example.

We will need the following generalization of reproducing kernel Hilbert spaces (GRKHS) as a tool for the analysis.
\begin{definition}[Generalized Reproducing Kernel Hilbert Space]
    Let $\mcal{H}\subseteq\mcal{V}$ and $\braces{L_y}_{y\in Y} \subseteq \mcal{L}\pars{\mcal{H}, \mbb{R}^\ell}$ be a family of bounded $y$-dependent linear operators.
    Then the pair $\pars{\mcal{H}, \braces{L_y}_{y\in Y}}$ generalizes the concept of reproducing kernel Hilbert spaces.
\end{definition}
If $\pars{\mcal{H}, \braces{L_y}_{y\in Y}}$ forms a GRKHS and $\abs{v}_y := \norm{L_yv}_2$ then
\begin{equation}
    \norm{v}_{w,\infty} \le \varkappa \norm{v}_{\mcal{H}}
    \qquad\text{and}\qquad
    K\pars{U\pars{A}} \le \varkappa^2 \lambda^2,
\end{equation}
for $v\in A\subseteq\mcal{H}$ with $\varkappa := \sup_{y\in Y} \sqrt{w\pars{y}} \norm{L_y}_{\mcal{L}\pars{\mcal{H},\mbb{R}^l}}$ and 
$\lambda := \sup_{v\in A\setminus\braces{0}} \frac{\norm{v}_{\mcal{H}}}{\norm{v}}$.
This allows to efficiently compute an upper bound for $K\pars{U\pars{A}}$ even if the dimension of $Y$ is large.
\begin{remark}
    In this setting the application of Theorem~\ref{thm:error_bound} leads to
    \begin{equation}
        \norm{u - \epprum}
        \lesssim \norm{u - \apprum}_{w,\infty}
        \le \varkappa \norm{u - \apprum}_{\mcal{H}}
    \end{equation}
    whenever \rip{\braces{\apprum} - \mcal{M}}{\delta} holds.
\end{remark}
In the following we consider a linear model space $\mcal{M}\subseteq\mcal{H} := H^{M}\pars{Y}$ with a Lipschitz domain $Y\subseteq \mbb{R}^d$.
For each $m \le M - \tfrac{d}{2}$ we consider $\mcal{V} := H^m\pars{Y}$ with $\abs{v}_y := \norm{L_y v}_2$ and $L^m_y \in \mcal{L}\pars{\mcal{H}, \mbb{R}^\ell}$.
This means that we are searching the best approximation in the model space $\mcal{M}$ with respect to the $H^m$-norm.
To investigate the influence of $m$ on the sample complexity, the upper bound $\varkappa_m^2\lambda_m^2$ for $K\pars{U\pars{\mcal{M}}}$ depending on $m$ has to be computed.

It is proved in Appendix~\ref{app:sec:Hk_seminorm} that for $w\equiv 1$
\begin{equation}
    \varkappa_m := \pars{2\sqrt{\pi}}^{-d} \frac{\Gamma\pars{M+1} \Gamma\pars{M-m-\frac{d}{2}}}{\Gamma\pars{M-m}} .
\end{equation}
Since $\varkappa_m$ increases but $\lambda_m$ decreases with $m$, both effects should be equilibrated by a proper choice of $m$.
This is illustrated for two different model spaces $\mcal{M}$ in Figure~\ref{fig:rkhs_theoretical}.
The small effect of $\varkappa_m$ is due to the dimension $d=1$ for which we can bound
\begin{equation}
    \frac{\pars{M+1}!}{2\sqrt{\pi}} \pars{M-m}^{-1/2}
    < \varkappa_m
    < \frac{\pars{M+1}!}{2\sqrt{\pi}} \pars{M-m-1}^{-1/2}
\end{equation}
by Gautschi's inequality~\cite[Eq.~5.6.4]{DLMF}.

We conclude that for linear model spaces an approximation with respect to the $H^m$-norm for larger $m$ requires less samples than an approximation with respect to the $L^2$-norm.
For $m=1$ this hypothesis is confirmed numerically in Figure~\ref{fig:H1_vs_L2_inf}.
For an application in the setting of weighted sparsity we refer to the recent work~\cite{adcock2019H1_weighted_sparsity}.
Note that this does not have to be the case in general.
If the model class contains only piecewise constant functions then information about the gradients is irrelevant.
Such phenomena may also arise due to intricate properties of the model class and may only be observable by looking at the variation constant.

\begin{figure}
    \centering
    \includegraphics[width=\textwidth]{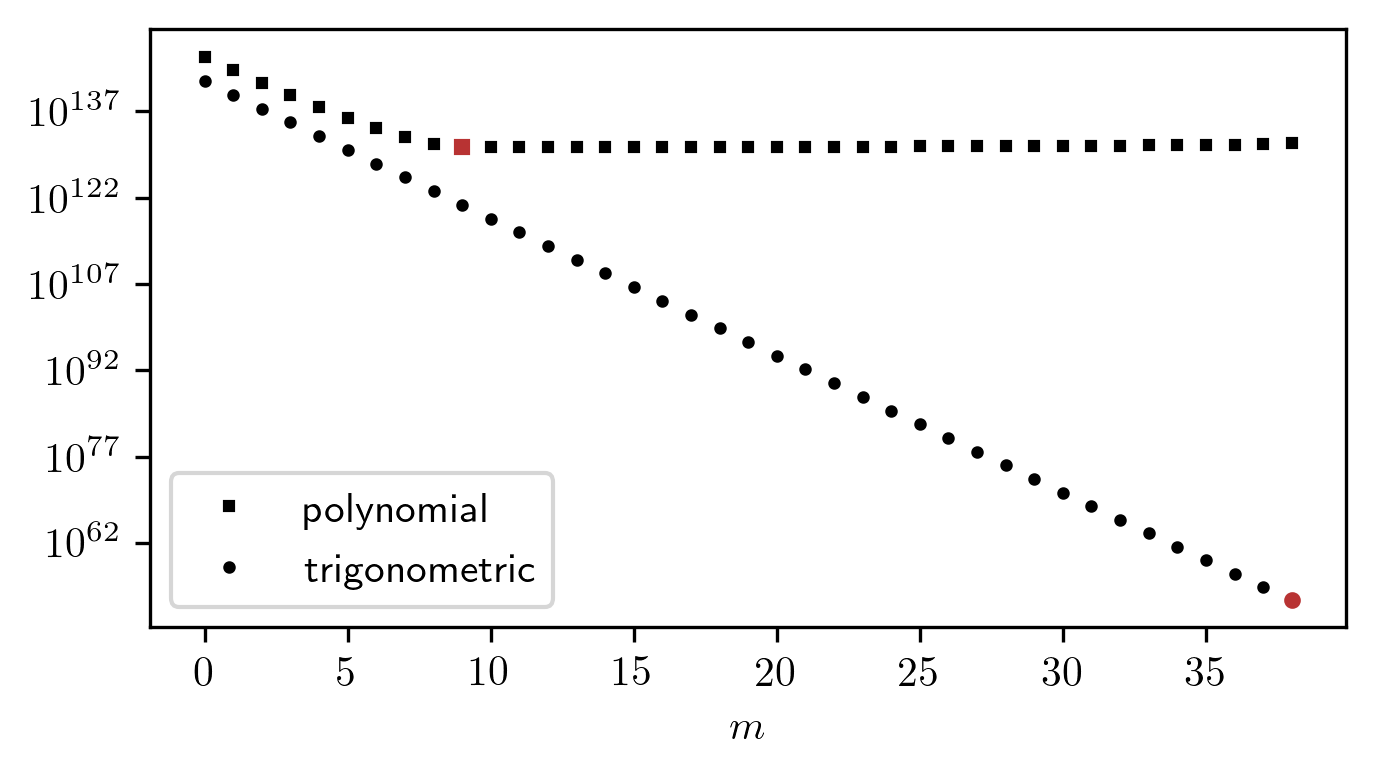}
    \caption{The upper bound $\varkappa_m^2\lambda_m^2$ for the variation constant \revision{for two different model spaces $\mcal{M}$ with $Y=[-1,1]$ and $M=40$.
    The squares and dots represent the bound when $A$ is the span of the first $10$ polynomials and trigonometric polynomials, respectively.}
    The optimal $m$ is marked fat and in red.}
    \label{fig:rkhs_theoretical}
\end{figure}
\begin{figure}
    \centering
    \begin{subfigure}[b]{0.475\textwidth}
        \centering
        \includegraphics[width=\textwidth]{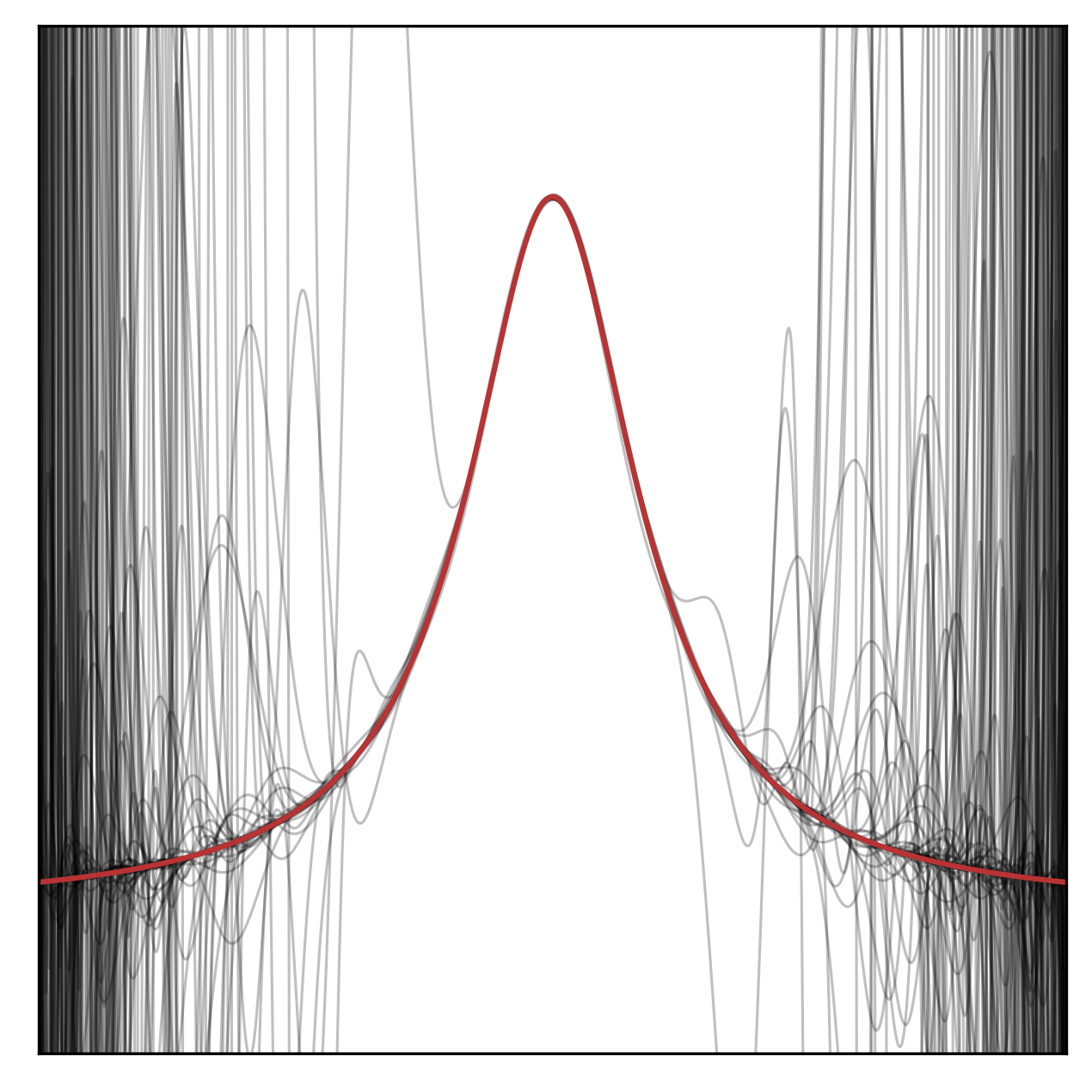}
        \caption{Least squares, $L^2$-norm}
        \label{fig:least_squares_L2}
    \end{subfigure}
    \hfill
    \begin{subfigure}[b]{0.475\textwidth}
        \centering
        \includegraphics[width=\textwidth]{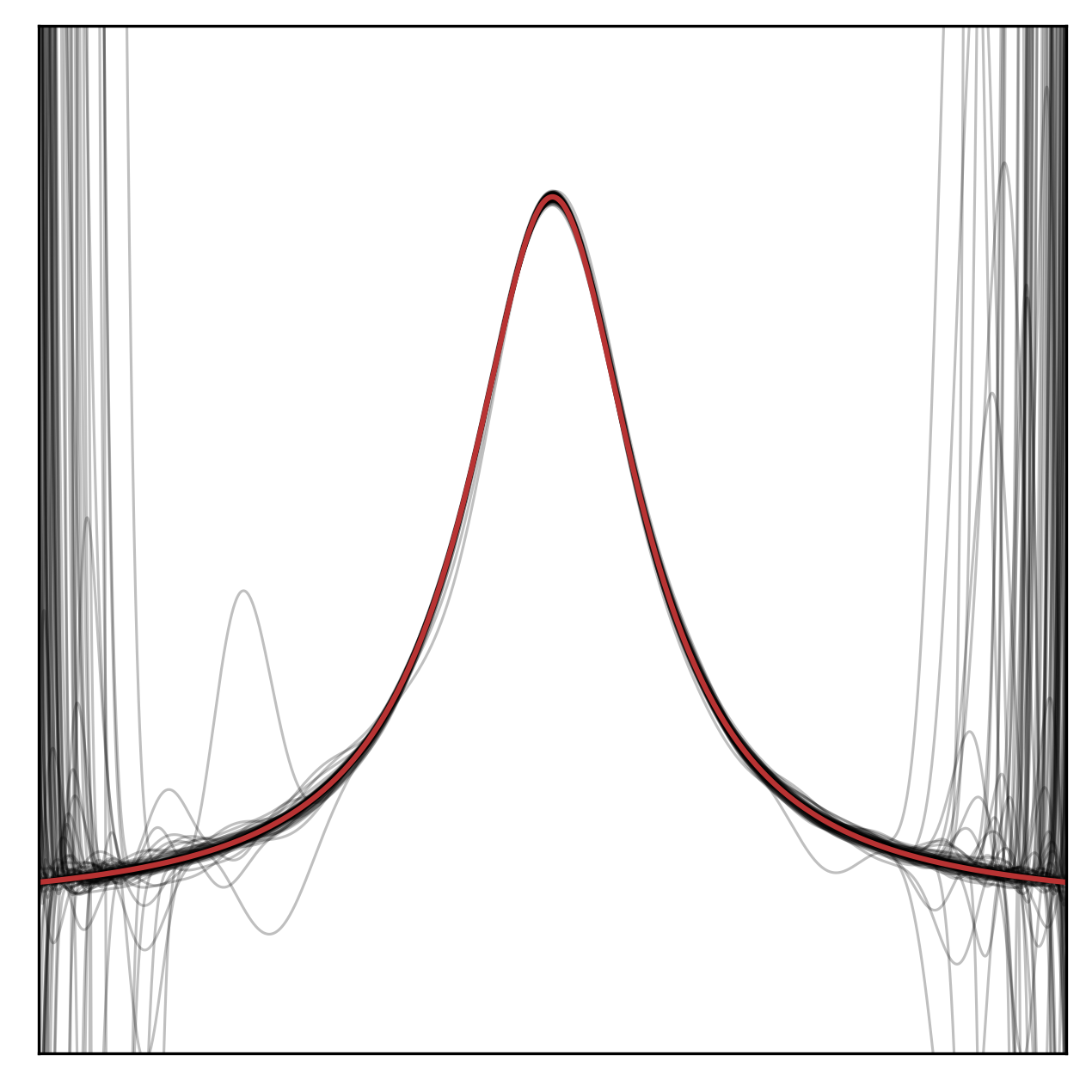}
        \caption{Least squares, $H^1$-norm}
        \label{fig:least_squares_H1}
    \end{subfigure}
    \caption{
    Overlaid least squares approximations of the function $f\pars{x} = \frac{1}{1+25x^2}$ (red) by Legendre polynomials of degree $29$.
    Different approximations correspond to different random draws of $n=40$ sampling points from the uniform measure on $\bracs{-1,1}$.
    }
    \label{fig:H1_vs_L2_inf}
\end{figure}

Also note that the minimization with respect to the $H^m$-norm does not necessarily require more computational effort than the minimization with respect to the $L^2$-norm.
The values of both seminorms can be computed with a single evaluation of the Fourier transform $\hat{u} = \mcal{F} u$ of $u$.
A particularly important application of this setting is Magnetic Resonance Imaging (MRI).
Recalling Remark~\ref{rmk:residuals}, we describe this application in the following example.
\begin{example}[MRI] \label{ex:MRI}
    In Magnetic Resonance Imaging an image $u$ is sampled via evaluations of its Fourier transform $\hat{u} = \mcal{F}u$.
    This means that the samples $\braces{\hat{u}_i}_{i\in\bracs{n}}$ satisfy $\hat{u}_i = \hat{u}\pars{\omega_i}$ for samples of the angular frequency $\omega_i$.
    The precise distribution of the samples $\omega_i$ is given by the problem and is not of particular interest in this example.
    Since $u$ is an image of the human body, we can assume that it can be sparsely represented in a wavelet basis (cf.~\cite{candes2003curvelets,petersen2016shearlets}).
    The MRI reconstruction problem can hence be written as
    \begin{equation}
        \min_{v\in\mcal{M}_{\boldsymbol{1},k}} \norm{\hat{u} - \mcal{F}v},
    \end{equation}
    where the seminorm is chosen as $\abs{v}_{\omega} = \abs{v\pars{\omega}}$ and $\mcal{M}_{\boldsymbol{1},k}$ is defined with respect to the chosen wavelet basis.
    From Remark~\ref{rmk:residuals} we know that recovery requires \rip{\braces{\hat{u}} - \mcal{F}\mcal{M}_{\boldsymbol{1},k}}{\delta} and \rip{\braces{\hat{u} - \mcal{F}\apprum}}{\delta} the probabilities of which can be bounded by Corollary~\ref{cor:sample_size_rip}.
    
    In the following we only compute the variation constant since the Fourier transform is an isometry and does not change the covering number.
    Assuming $u\in\mcal{M}_{\boldsymbol{1},k}$ we can estimate
    \begin{equation}
        K\pars{U\pars{\hat{u}-\mcal{F}\mcal{M}_{\boldsymbol{1},k}}}
        \le K\pars{U\pars{\mcal{F}\mcal{M}_{\boldsymbol{1},2k}}} .
    \end{equation}
    To evaluate this, let $\psi$ be the mother wavelet and define the daughter wavelets $\psi_{a,b}\pars{t} = \frac{1}{\sqrt{a}}\psi\pars{\frac{t-b}{a}}$.
    Due to basic properties of the Fourier transform $\hat{\psi}_{a,b}\pars{\omega} = \sqrt{a}\hat{\psi}\pars{a\omega}\exp\pars{-ia\omega}$ and since the daughter wavelets are normalized we obtain
    \begin{equation}
        K\pars{U\pars{\inner{\hat{\psi}_{a,b}}}}
        = \frac{\norm{\hat{\psi}_{a,b}}_{L^\infty}^2}{\norm{\hat{\psi}_{a,b}}_{L^2}^2} 
        = a\norm{\hat{\psi}}_{L^\infty}^2 .
    \end{equation}
    Note that $\hat{\psi}$ is the mother wavelet and therefore $\norm{\hat{\psi}}_{L^\infty}^2$ is constant.
    It can be concluded that many samples are needed to recover larger scale coefficients but fewer samples for smaller scales.
    This suggests a multilevel approach where the small-scale coefficients are learned separately from the large-scale coefficients.
    This was already observed in the compressed sensing literature (cf.~\cite{adcock2017breaking_coherence_barrier}).
    Typically, these schemes use the classical unweighted notion of sparsity.
    For a recent application of weighted sparsity in the context of residual minimization in a sparse wavelet representation we refer to~\cite{daws2019weighted}.
    
    Due to the high variation constant of the large scale coefficients, it is sensible to incorporate as much information as possible into this model class.
    In the spirit of works like~\cite{chen2017manifoldConstrained_lowRank}, this can for example be achieved by means of manifold constraints.
    These manifolds can either be estimated for a single patient (cf.~\cite{meng2020manifoldConstrained_lowRank_MRI}) or for multiple patients when it can be assumed that the large-scale structures remain similar for different patients.
    In this way the image $u$ is decomposed (approximately) as a sum of a background image modelling the healthy tissue and a foreground image modelling the pathological lesion.
    
    Note that if the mother wavelet $\Psi$ is differentiable we can instead consider the semi-norm $\abs{v}_\omega := \sqrt{1+\omega^2}\abs{v\pars{\omega}}$, which corresponds to the $H^1$-norm in the physical domain.
    Computing the variation constant is however out of the scope of our discussion.
\end{example}
}

\section{Discussion}
\label{sec:discussion}

\revision{
The nonlinear least squares method is probably the easiest and currently the most commonly used setting in machine learning regression.
In Section~\ref{sec:main_results} we derive an error bound for the nonlinear least squares estimator~\eqref{eq:emp_min} that can be used with arbitrary model classes.
This result is based on a \emph{restricted isometry property (RIP)}, which we prove to hold with high probability when the number of samples is sufficiently large.

To put our theory into perspective, we apply it to well-known model classes and compare the results to the near optimal bounds that often already exist in the literature.
In the cases of linear spaces (Section~\ref{sec:linear}), functions with sparse representation (Section~\ref{sec:sparse}) and low-rank tensors (Section~\ref{sec:cp-tensors}), we obtain asymptotic bounds which differ from these near optimal ones by a polynomial factor.
This means that our analysis does not provide optimal complexity bounds when the number of samples should be determined a priori and when sampling is costly (i.e.\ when it is imperative to require as few samples as possible).
We however assume that a more meticulous application of modern concentration arguments (like~\cite{dirksen2015generic_chaining}) would close this gap.
We also obtain first bounds for the sample complexity for rank-$1$ measured low-rank tensors in Section~\ref{sec:cp-tensors}.
These bounds however only improve the sample complexity of full-rank tensors by a logarithmic term.
An intuition for this result is provided by matrix recovery where it is known that regularity in the form of incoherence is needed in addition to the low-rank property.
As a first remedy we suggest to impose additional regularity assumptions on the model class as was done in~\cite{goette2020local_interactions}. %
We however believe that this problem can be handled by taking the regularity of the function $u$ that we want to approximate into account.
Figure~\ref{fig:least_squares_regularity} illustrates this behaviour.
The model class used for all three experiments is the same and only the regularity of the function varies.
Even though the best approximation error in all three cases is bounded by $10^{-3}$, we can observe how the empirical approximations deteriorate with decreasing regularity. 
The relative errors for the empirical approximation increase from $10^{-2}$ to $10^{1}$.
This phenomenon will be investigated in future research.
\begin{figure}
     \centering
     \begin{subfigure}[b]{0.3\textwidth}
         \centering
         \includegraphics[width=\textwidth]{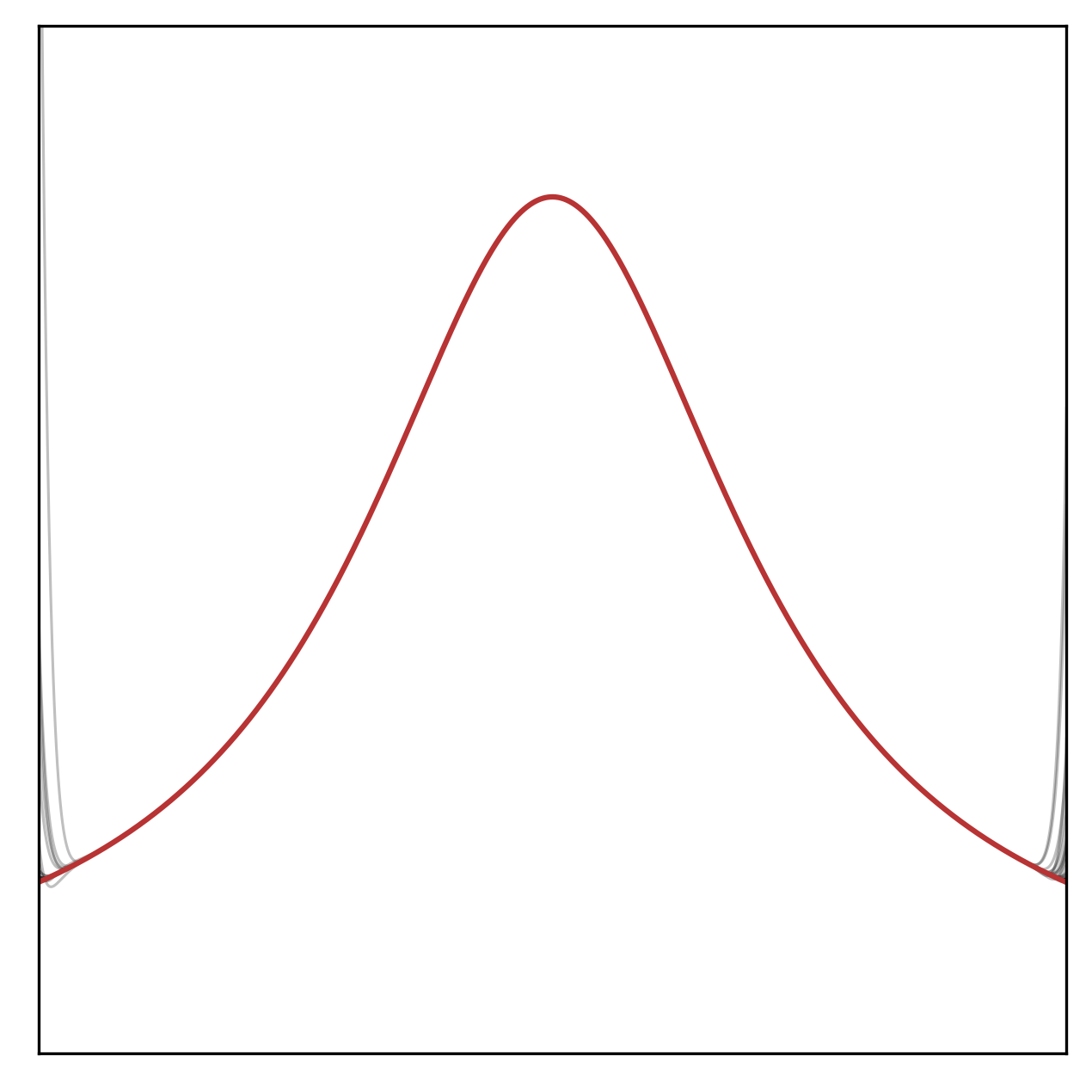}
         \caption{$f\pars{x} = \frac{1}{1+5x^2}$}
         \label{fig:c5}
     \end{subfigure}
     \hfill
     \begin{subfigure}[b]{0.3\textwidth}
         \centering
         \includegraphics[width=\textwidth]{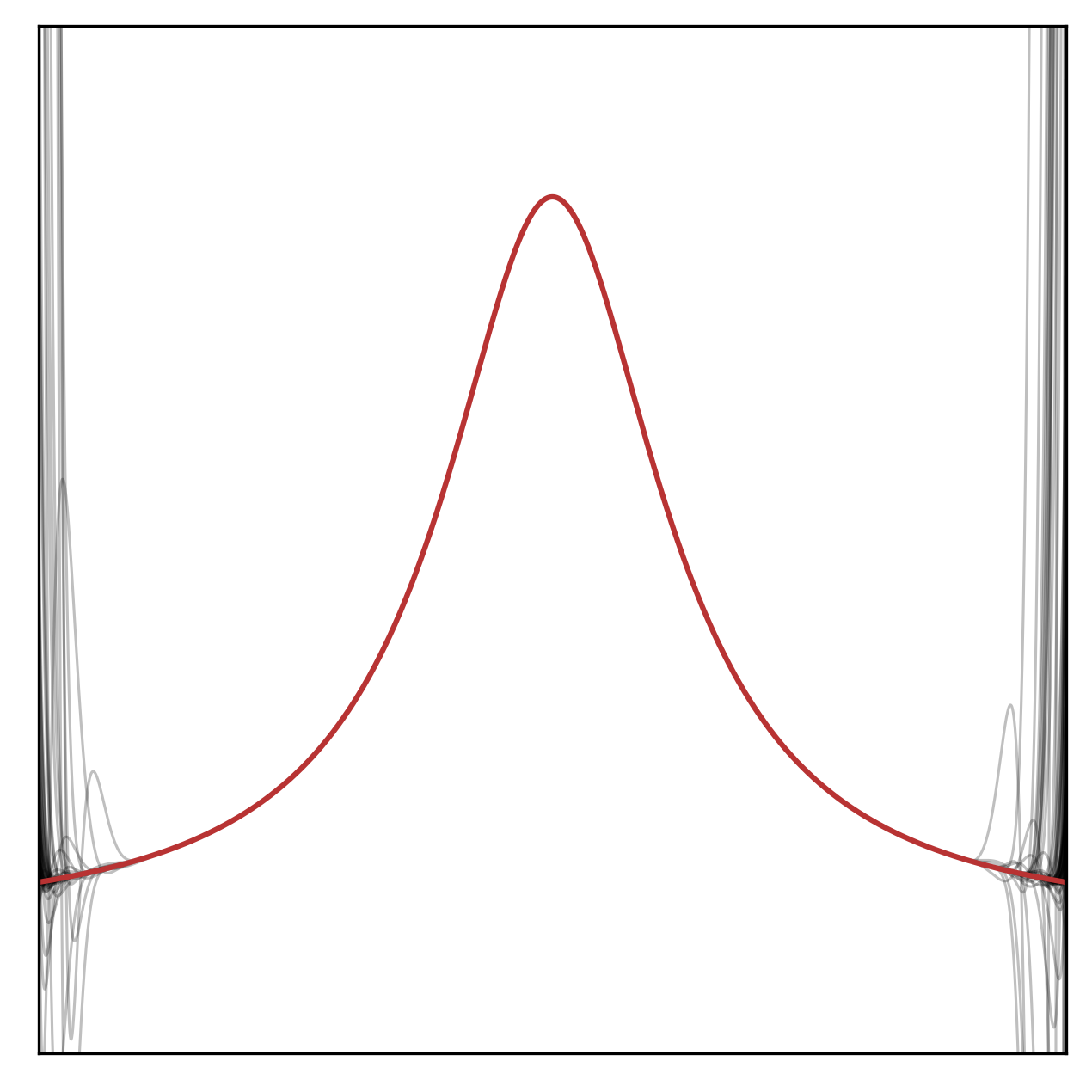}
         \caption{$f\pars{x} = \frac{1}{1+15x^2}$}
         \label{fig:c15}
     \end{subfigure}
     \hfill
     \begin{subfigure}[b]{0.3\textwidth}
         \centering
         \includegraphics[width=\textwidth]{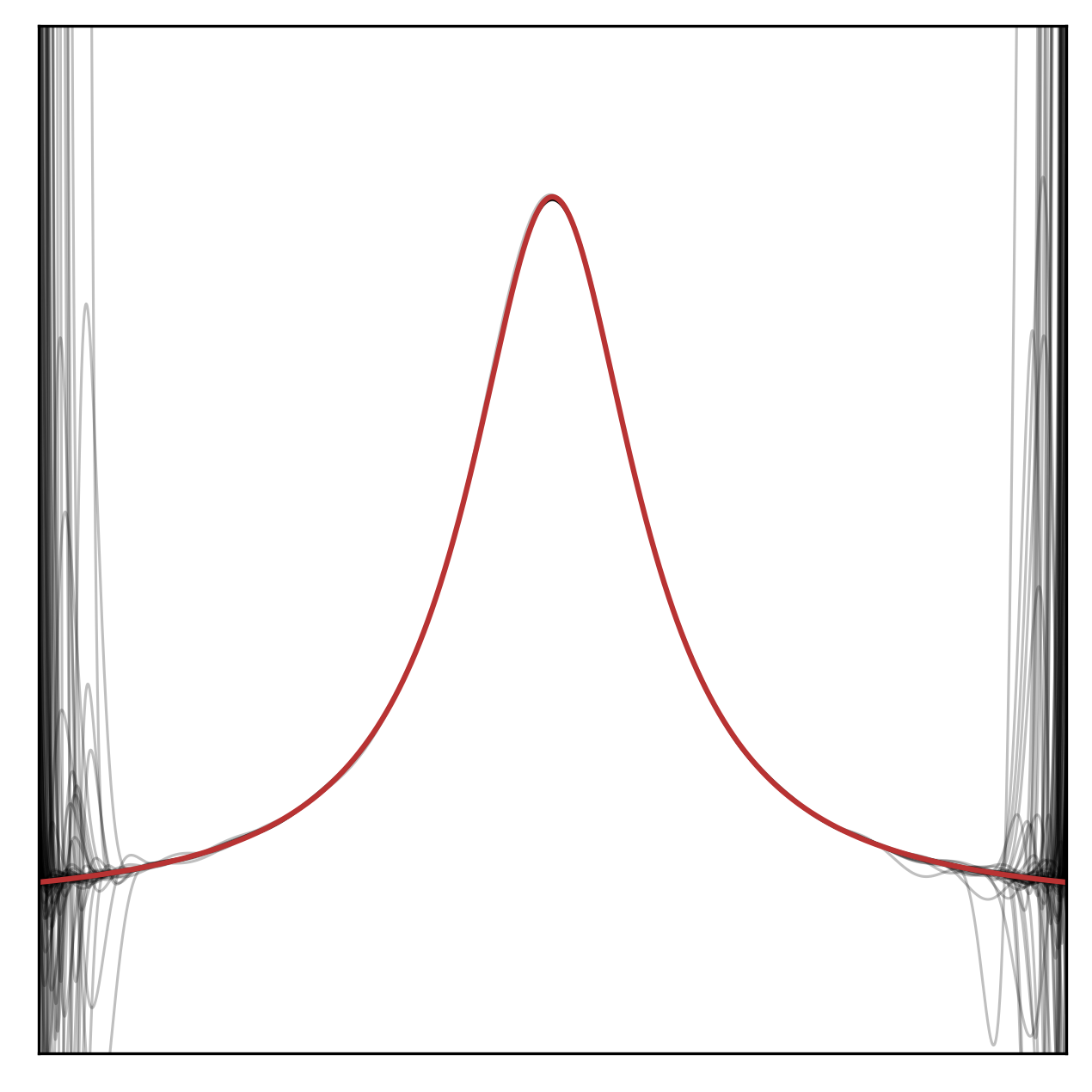}
         \caption{$f\pars{x} = \frac{1}{1+25x^2}$}
         \label{fig:c25}
     \end{subfigure}
    \caption{
    Overlaid least squares approximations of the function $f\pars{x} = \frac{1}{1+c x^2}$ (red) by Legendre polynomials of degree $29$.
    Different approximations correspond to different random draws of $n = 100$ sampling points from the uniform measure on $\bracs{-1,1}$.
    }
    \label{fig:least_squares_regularity}
\end{figure}

Despite the mentioned limitation, we nevertheless obtain qualitatively similar results to what is reported with more specialized approaches.
In particular this concerns the emergence of an optimal sampling measure in Section~\ref{sec:linear}, the importance of weighted sparsity in Section~\ref{sec:sparse} and the advantage of multilevel sampling in Example~\ref{ex:MRI}.
The generality of our theory also allows us to combine these result and derive an optimal weight function for weighted sparsity in Lemma~\ref{lem:weighted_sparsity_weight_function}.
Since these results rely only on an estimation of the RIP, they can be compared to results on weighted $\ell^1$-minimization.
We observe an improvement over the unweighted case.

In a final section, the dependence of the sample complexity on the seminorm that is used is investigated.
We observe faster convergence when stronger norms are used and provide a theoretical reasoning for this effect.

Despite several remaining problems, we hope that this work is a promising first step towards a general theory for the sample complexity of the nonlinear least squares problem.
We also want to emphasise that although our discussion is limited to well-known model classes, the developed theory can be applied to arbitrary model classes which may even be constructed empirically by methods such as manifold learning.
}

\section{Acknowledgements}

We thank the anonymous referees for suggestions that helped to significantly improve the manuscript and also to correct an error.
We also thank Leon Sallandt, Mathias Oster and Michael G\"otte for fruitful discussions.

M.~Eigel acknowledges support by the DFG SPP 1886.
R.~Schneider was supported by the Einstein Foundation Berlin.
P.~Trunschke acknowledges support by the Berlin International Graduate School in Model and Simulation based Research (BIMoS).

\printbibliography

\appendix

\section{Proof of Lemma~\ref{lem:generalization_error}}
\label{app:lem:generalization_error}

The proof consists of two steps.
In the first step we derive Lemma~\ref{lem:sup-to-max} to show that there exists $\nu\in\mbb{N}$ and $\braces{u_j}_{j\in\bracs{\nu}}\subseteq U\pars{A}$ such that
\begin{align}
    \mbb{P}\bracs*{\sup_{u\in U\pars{A}} \abs{\norm{u}^2 - \norm{u}^2_n} > \delta}
    &\le \mbb{P}\bracs*{\max_{1\le j\le\nu} \abs{\norm{u_j}^2 - \norm{u_j}^2_n} > \tfrac{\delta}{2}}.
\intertext{Using a union bound argument it follows that}
    \mbb{P}\bracs*{\max_{1\le j\le\nu} \abs{\norm{u_j}^2 - \norm{u_j}^2_n} > \tfrac{\delta}{2}} &\le \sum_{1\le j\le\nu} \mbb{P}\bracs*{\abs{\norm{u_j}^2 - \norm{u_j}^2_n} > \tfrac{\delta}{2}} \\
    &\le \nu \max_{1\le j\le\nu} \mbb{P}\bracs*{\abs{\norm{u_j}^2 - \norm{u_j}^2_n} > \tfrac{\delta}{2}}.
\end{align}
In the second step we prove Lemma~\ref{lem:applied-hoeffding} which allows us to bound the probability
\begin{align}
    \mbb{P}\bracs*{\abs{\norm{u_j}^2 - \norm{u_j}^2_n} > \tfrac{\delta}{2}}
    \le 2 \exp\pars{-\tfrac{\delta^2n}{2K^2}}
\end{align}
for each $1\le j \le \nu$ by a standard concentration inequality.
Combining both inequalities yields the statement.

In the following we are concerned with proving Lemmas~\ref{lem:sup-to-max} and \ref{lem:applied-hoeffding} which both rely on properties of the function $\ell_y : u \mapsto w\pars{y} \abs{u}_y^2$.

\begin{lemma}\label{lem:ell_properties}
    The function $\ell_y : u \mapsto w\pars{y} \abs{u}_y^2$ has the properties
    \begin{itemize}
        \item $\abs{\ell_y\pars{u}} \le K$ and
        \item $\abs{\ell_y\pars{u} - \ell_y\pars{v}} \le 2\sqrt{K}\norm{u-v}_{w,\infty}$
    \end{itemize}
    for all $u,v \in U\pars{A}$.
\end{lemma}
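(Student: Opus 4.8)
The plan is to read off both bounds directly from the definitions of $\norm{\bullet}_{w,\infty}$ and $K = K\pars{U\pars{A}}$, using only that $v\mapsto\abs{v}_y$ is a seminorm on $\mcal{V}$ for each fixed $y$. The elementary fact I would rely on throughout is that $\sqrt{w\pars{y}}\abs{v}_y \le \norm{v}_{w,\infty}$ for $\rho$-almost every $y$, which is immediate from $\norm{v}_{w,\infty}^2 = \esssup_{y\in Y} w\pars{y}\abs{v}_y^2$ together with the monotonicity of the square root.

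First I would establish the pointwise bound. Since $\ell_y\pars{u} = w\pars{y}\abs{u}_y^2 \ge 0$, we have $\abs{\ell_y\pars{u}} = w\pars{y}\abs{u}_y^2 \le \norm{u}_{w,\infty}^2$ for almost every $y$, and for $u\in U\pars{A}$ the definition of the variation constant gives $\norm{u}_{w,\infty}^2 \le K$. This is already the first assertion.

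For the Lipschitz-type estimate I would factor the difference of squares and apply the reverse triangle inequality $\abs*{\abs{u}_y - \abs{v}_y} \le \abs{u-v}_y$ for the seminorm $\abs{\bullet}_y$:
\begin{equation*}
    \abs{\ell_y\pars{u} - \ell_y\pars{v}}
    = w\pars{y}\abs*{\abs{u}_y^2 - \abs{v}_y^2}
    = w\pars{y}\abs*{\abs{u}_y - \abs{v}_y}\pars*{\abs{u}_y + \abs{v}_y}
    \le w\pars{y}\abs{u-v}_y\pars*{\abs{u}_y + \abs{v}_y}.
\end{equation*}
Splitting $w\pars{y} = \sqrt{w\pars{y}}\sqrt{w\pars{y}}$ and bounding each factor $\sqrt{w\pars{y}}\abs{\bullet}_y$ by the corresponding $\norm{\bullet}_{w,\infty}$ then yields
\begin{equation*}
    \abs{\ell_y\pars{u} - \ell_y\pars{v}}
    \le \norm{u-v}_{w,\infty}\pars*{\norm{u}_{w,\infty} + \norm{v}_{w,\infty}}
    \le 2\sqrt{K}\norm{u-v}_{w,\infty},
\end{equation*}
where the last step uses $\norm{u}_{w,\infty}, \norm{v}_{w,\infty} \le \sqrt{K}$ because $u,v\in U\pars{A}$.

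I do not expect a genuine obstacle here; the argument is essentially bookkeeping with the seminorm structure. The only point that warrants a word of care is that all these inequalities hold for $\rho$-almost every $y$ rather than literally every $y$, owing to the essential supremum in the definition of $\norm{\bullet}_{w,\infty}$. This is harmless, since the estimates are used only inside the $\esssup$- and expectation-based concentration arguments of Lemmas~\ref{lem:sup-to-max} and~\ref{lem:applied-hoeffding}. The single structural ingredient is the reverse triangle inequality for seminorms, which is valid precisely because each $\abs{\bullet}_y$ is assumed to be a seminorm.
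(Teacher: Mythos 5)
Your proof is correct and follows essentially the same route as the paper: both arguments rest on the reverse triangle inequality for the seminorm $\sqrt{w\pars{y}}\abs{\bullet}_y$ together with the uniform bound $\sqrt{K}$ on $U\pars{A}$, your explicit difference-of-squares factorization being just the unrolled form of the paper's appeal to the Lipschitz continuity of $x\mapsto x^2$ on $\bracs{-\sqrt{K},\sqrt{K}}$. Your remark that the estimates hold only for $\rho$-almost every $y$ is a welcome precision that the paper glosses over.
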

\renewcommand{\qedsymbol}{\ensuremath{\square}}
\begin{proof}
    Let $u,v\in U\pars{A}$.
    The first statement follows immediately by
    \begin{equation}
        \abs{\ell_y\pars{u}} \le \sup_{u\in U\pars{A}} \esssup_{y\in Y} w\pars{y}\abs{u}_y^2 = K .
    \end{equation}
    To prove the second statement we consider the seminorm $\mcal{k}_y := \sqrt{\ell_y}$ and
    use the reverse triangle inequality 
    \begin{equation}
        \abs{\mcal{k}_y\pars{u} - \mcal{k}_y\pars{v}} \le \mcal{k}_y\pars{u-v} \le \esssup_{y\in Y} \mcal{k}_y\pars{u-v} = \norm{u-v}_{w,\infty}.
    \end{equation}
    Since $\mcal{k}_y$ is bounded by $\sqrt{K}$, we can use the Lipschitz continuity of $x\mapsto x^2$ on $[-\sqrt{K}, \sqrt{K}]$ to conclude
    \begin{equation}
        \abs{\ell_y\pars{u} - \ell_y\pars{v}} \le 2\sqrt{K}\abs{\mcal{k}_y\pars{u} - \mcal{k}_y\pars{u}} \le 2\sqrt{K} \norm{u-v}_{w,\infty}. \tag*{\qedhere}
    \end{equation}
\end{proof}
\renewcommand{\qedsymbol}{\ensuremath{\blacksquare}}

As an intermediate step we first prove Lemma~\ref{lem:sup-union-bound} from which Lemma~\ref{lem:sup-to-max} follows almost immediately.

\begin{lemma}\label{lem:sup-union-bound}
    Let $\nu := \nu_{\norm{\bullet}_{w,\infty}} \pars*{U(A), \tfrac{\delta}{8\sqrt{K}}}$ and $\braces{u_j}_{j\in\bracs{\nu}}$ be the centres of the corresponding covering.
    Then almost surely
    \begin{equation}
        \sup_{u\in U\pars{A}} \abs{\norm{u}^2 - \norm{u}_n^2}
        \le \tfrac{\delta}{2} + \max_{1\le j\le\nu} \abs*{\norm{u_j}^2 - \norm{u_j}_{n}^2}.
    \end{equation}
\end{lemma}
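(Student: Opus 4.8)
The plan is to carry out a standard discretization argument that replaces the supremum over the infinite set $U\pars{A}$ by a maximum over the finitely many covering centres $\braces{u_j}_{j\in\bracs{\nu}}$, at the price of a controlled approximation error. The natural object to track is the random functional $F\pars{u} := \norm{u}^2 - \norm{u}_n^2$. Recalling $\ell_y\pars{u} = w\pars{y}\abs{u}_y^2$ from Lemma~\ref{lem:ell_properties}, one has $\norm{u}^2 = \int_Y \ell_y\pars{u}\dx{\pars{w^{-1}\rho}}\pars{y}$ and $\norm{u}_n^2 = \tfrac1n\sum_{i=1}^n \ell_{y_i}\pars{u}$, so $F\pars{u}$ is the difference between an expectation and an empirical average of $y\mapsto\ell_y\pars{u}$. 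The essential observation is that both terms are Lipschitz in $u$ with respect to $\norm{\bullet}_{w,\infty}$, which is precisely the content of the second bullet of Lemma~\ref{lem:ell_properties}.

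The key step is to bound how much $F$ can change between an arbitrary $u\in U\pars{A}$ and the nearest covering centre $u_j$, which by construction of the covering satisfies $\norm{u-u_j}_{w,\infty}\le \tfrac{\delta}{8\sqrt K}$. I would estimate the exact and the empirical contributions separately: for the exact norm, $\abs{\norm{u}^2 - \norm{u_j}^2}\le \int_Y \abs{\ell_y\pars{u}-\ell_y\pars{u_j}}\dx{\pars{w^{-1}\rho}}\pars{y}$, and for the empirical norm, $\abs{\norm{u}_n^2 - \norm{u_j}_n^2}\le \tfrac1n\sum_{i=1}^n \abs{\ell_{y_i}\pars{u}-\ell_{y_i}\pars{u_j}}$. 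Applying the Lipschitz bound of Lemma~\ref{lem:ell_properties} to each integrand and to each summand, both quantities are at most $2\sqrt K\,\norm{u-u_j}_{w,\infty}$, whence by the triangle inequality
\begin{equation}
    \abs{F\pars{u}-F\pars{u_j}}
    \le 4\sqrt K\,\norm{u-u_j}_{w,\infty}
    \le 4\sqrt K\cdot\tfrac{\delta}{8\sqrt K}
    = \tfrac{\delta}{2}.
\end{equation}

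With this estimate the conclusion is immediate: for every $u\in U\pars{A}$, choosing the nearest centre $u_j$ gives $\abs{F\pars{u}}\le \abs{F\pars{u_j}} + \tfrac{\delta}{2}\le \max_{1\le j\le\nu}\abs{F\pars{u_j}} + \tfrac{\delta}{2}$, and taking the supremum over $u\in U\pars{A}$ yields the claim. The one point requiring care — and the reason the statement is only almost sure rather than deterministic — is that the Lipschitz estimate for the empirical part must hold simultaneously at the actual random sample points $y_i$. Since the bound $\abs{\ell_y\pars{u}-\ell_y\pars{v}}\le 2\sqrt K\norm{u-v}_{w,\infty}$ derives from an essential supremum, it holds for $\rho$-almost every (hence $w^{-1}\rho$-almost every) $y$, so it holds at all of $y_1,\dots,y_n$ with probability one; this is exactly what makes the displayed inequality an almost-sure statement. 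The only remaining bookkeeping is to match the combined Lipschitz factor $4\sqrt K$ against the prescribed covering radius $\tfrac{\delta}{8\sqrt K}$ so that the discretization error lands precisely at $\tfrac{\delta}{2}$, which I expect to be the sole delicate constant in the argument.
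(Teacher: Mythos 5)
Your proposal is correct and follows essentially the same route as the paper: both pass to the nearest covering centre, invoke the Lipschitz bound $\abs{\ell_y\pars{u}-\ell_y\pars{u_j}}\le 2\sqrt{K}\norm{u-u_j}_{w,\infty}$ from Lemma~\ref{lem:ell_properties} for the exact (via Jensen) and empirical parts, and conclude by the triangle inequality; the only cosmetic difference is that you merge the two $\tfrac{\delta}{4}$ bounds into a single $4\sqrt{K}$-Lipschitz estimate for $F\pars{u}=\norm{u}^2-\norm{u}_n^2$. Your explicit remark that the almost-sure qualifier stems from the essential supremum holding only at $\rho$-almost every $y$, and hence at the random samples with probability one, is a point the paper leaves implicit.
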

\renewcommand{\qedsymbol}{\ensuremath{\square}}
\begin{proof}
    Let $u \in U\pars{A}$ be given.
    Then by definition of the $\braces{u_j}_{j\in\bracs{\nu}}$, there is a specific $u_j$ with $\norm{u - u_j}_{w,\infty} \le \frac{\delta}{8 \sqrt{K}}$.
    By Lemma~\ref{lem:ell_properties} and Jensen's inequality we know that
    \begin{align}
        \abs{\norm{u}^2 - \norm{u_j}^2}
        &\le \int_Y \abs*{\ell_y\pars{u} - \ell_y\pars{u_j}} \dmx{\rho}{y}
        \le 2\sqrt{K} \norm{u-v}_{w,\infty}
        \le \tfrac{\delta}{4} \\
        \intertext{and almost surely}
        \abs{\norm{u}_n^2 - \norm{u_j}_n^2}
        &\le \tfrac{1}{n} \sum_{i=1}^n \abs*{\ell_{y_i}\pars{u} - \ell_{y_i}\pars{u_j}}
        \le 2\sqrt{K} \norm{u-v}_{w,\infty}
        \le \tfrac{\delta}{4} .
    \end{align}
    Therefore, by triangle inequality,
    \begin{align}
        \abs{\norm{u}^2 - \norm{u}^2_n}
        &\le \abs{\norm{u}^2 - \norm{u}^2_n - \pars{\norm{u_j}^2 - \norm{u_j}^2_n}} + \abs{\norm{u_j}^2 - \norm{u_j}^2_n} \\
        &\le \abs{\norm{u}^2 - \norm{u_j}^2} + \abs{\norm{u}^2_n - \norm{u_j}^2_n} + \abs{\norm{u_j}^2 - \norm{u_j}^2_n} \\
        &\le \tfrac{\delta}{2} + \abs{\norm{u_j}^2 - \norm{u_j}^2_n} \quad\text{ almost surely.}
    \end{align}
    Taking the maximum concludes the proof.
\end{proof}
\renewcommand{\qedsymbol}{\ensuremath{\blacksquare}}

\begin{lemma}\label{lem:sup-to-max}
    Let $\nu := \nu_{\norm{\bullet}_{w,\infty}} \pars*{U(A), \tfrac{\delta}{8\sqrt{K}}}$ and $\braces{u_j}_{j\in\bracs{\nu}}$ be the centres of the corresponding covering.
    Then
    \begin{equation}
        \mbb{P}\bracs*{\sup_{u\in U\pars{A}} \abs{\norm{u}^2 - \norm{u}^2_n} > \delta}
        \le \mbb{P}\bracs*{\max_{1\le j\le\nu} \abs{\norm{u_j}^2 - \norm{u_j}^2_n} > \tfrac{\delta}{2}} .
    \end{equation}
\end{lemma}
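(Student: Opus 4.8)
The plan is to obtain this statement as an immediate corollary of Lemma~\ref{lem:sup-union-bound}, in which all the analytic work has already been discharged. First I would invoke that lemma, which supplies the almost sure pointwise bound
\begin{equation}
    \sup_{u\in U\pars{A}} \abs{\norm{u}^2 - \norm{u}_n^2}
    \le \tfrac{\delta}{2} + \max_{1\le j\le\nu} \abs{\norm{u_j}^2 - \norm{u_j}_n^2} .
\end{equation}
Here $\nu$ and the centres $\braces{u_j}_{j\in\bracs{\nu}}$ are exactly those of the statement, so there is nothing to set up beyond citing the previous lemma.

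Next I would translate this pointwise estimate into an inclusion of events. On the event that $\sup_{u\in U\pars{A}} \abs{\norm{u}^2 - \norm{u}_n^2} > \delta$, the almost sure inequality above forces $\delta < \tfrac{\delta}{2} + \max_{1\le j\le\nu} \abs{\norm{u_j}^2 - \norm{u_j}_n^2}$ outside a set of probability zero, which rearranges to $\max_{1\le j\le\nu} \abs{\norm{u_j}^2 - \norm{u_j}_n^2} > \tfrac{\delta}{2}$. Hence, up to a null set,
\begin{equation}
    \braces*{\sup_{u\in U\pars{A}} \abs{\norm{u}^2 - \norm{u}_n^2} > \delta}
    \subseteq \braces*{\max_{1\le j\le\nu} \abs{\norm{u_j}^2 - \norm{u_j}_n^2} > \tfrac{\delta}{2}} .
\end{equation}
Monotonicity of probability then yields the asserted inequality directly, since discarding a null set leaves the probabilities unchanged.

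The only point requiring any care is propagating the almost sure qualifier correctly through this event inclusion; because a null set does not affect probabilities, this is purely a matter of bookkeeping. I therefore do not expect a substantive obstacle in this lemma: the covering argument together with the Lipschitz estimate of Lemma~\ref{lem:ell_properties} already did the real work in establishing Lemma~\ref{lem:sup-union-bound}, and the present statement is essentially just a restatement of that bound at the level of probabilities.
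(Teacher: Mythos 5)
Your proposal is correct and matches the paper's own proof: both invoke Lemma~\ref{lem:sup-union-bound} for the almost sure pointwise bound, convert it into the event inclusion $\braces{\sup > \delta} \subseteq \braces{\max > \delta/2}$ up to a null set, and conclude by monotonicity of probability. No gaps.
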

\renewcommand{\qedsymbol}{\ensuremath{\square}}
\begin{proof}
    By Lemma~\ref{lem:sup-union-bound}
    \begin{equation}
        \sup_{u\in U\pars{A}} \abs{\norm{u}^2 - \norm{u}_n^2}
        \le \tfrac{\delta}{2} + \max_{1\le j\le\nu} \abs*{\norm{u_j}^2 - \norm{u_j}_{n}^2}
    \end{equation}
    holds almost surely. In this event we know that
    \begin{equation}
        \sup_{u\in U\pars{A}} \abs{\norm{u}^2 - \norm{u}^2_n} > \delta
        \Rightarrow \max_{1\le j\le\nu} \abs{\norm{u_j}^2 - \norm{u_j}^2_n} > \tfrac{\delta}{2}
    \end{equation}
    which concludes the proof.
\end{proof}
\renewcommand{\qedsymbol}{\ensuremath{\blacksquare}}

To prove Lemma~\ref{lem:applied-hoeffding} we first recall a standard concentration result from statistics.

\begin{lemma}[Hoeffding 1963]\label{lem:hoeffding}
    Let $\braces{X_i}_{i\in\bracs{N}}$ be a sequence of i.i.d.\ bounded random variables $\abs{X_i} \le M$ and define $\overline{X} \coloneqq \frac{1}{N}\sum_{i=1}^N X_i$. Then
    \begin{equation}
        \mbb{P}\bracs*{\abs{\mbb{E}\bracs{\overline{X}}-\overline{X}} \ge \delta} \le 2 \exp\pars*{-\tfrac{2\delta^2N}{M^2}} .
    \end{equation}
\end{lemma}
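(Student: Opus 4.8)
The plan is to establish the bound by the classical Cram\'er--Chernoff (exponential moment) method, reducing everything to an exponential moment estimate for a single centred, bounded summand. Writing $S \coloneqq \sum_{i=1}^N \pars{X_i - \mbb{E}\bracs{X_i}}$ we have $\overline{X} - \mbb{E}\bracs{\overline{X}} = \tfrac{1}{N} S$, so $\braces{\abs{\mbb{E}\bracs{\overline{X}} - \overline{X}} \ge \delta}$ is the union of $\braces{S \ge N\delta}$ and $\braces{-S \ge N\delta}$. A union bound produces the leading factor $2$, and since $-X_i$ satisfies the same hypotheses as $X_i$, it suffices to bound $\mbb{P}\bracs{S \ge N\delta}$.

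For this one-sided tail I would apply Markov's inequality to $e^{sS}$ for an arbitrary $s > 0$, giving $\mbb{P}\bracs{S \ge N\delta} \le e^{-sN\delta}\, \mbb{E}\bracs{e^{sS}}$, and then use independence to factorise $\mbb{E}\bracs{e^{sS}} = \prod_{i=1}^N \mbb{E}\bracs{e^{s\pars{X_i - \mbb{E}\bracs{X_i}}}}$. The whole argument therefore rests on a uniform bound for the moment generating function of one centred summand.

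The crux is Hoeffding's lemma: for a centred random variable $Y$ with $Y \in \bracs{a,b}$ almost surely one has $\mbb{E}\bracs{e^{sY}} \le \exp\pars{\tfrac{1}{8} s^2 \pars{b-a}^2}$. I would prove this by bounding the convex function $x \mapsto e^{sx}$ on $\bracs{a,b}$ by its secant line, taking expectations and using $\mbb{E}\bracs{Y} = 0$, and then expanding the resulting log-moment generating function $\psi\pars{s}$ to second order about $s = 0$: one checks $\psi\pars{0} = \psi'\pars{0} = 0$ and $\psi''\pars{s} \le \tfrac{1}{4}\pars{b-a}^2$, the last being the elementary fact that a variable supported in an interval of length $b-a$ has variance at most $\tfrac{1}{4}\pars{b-a}^2$. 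This is the only genuinely nontrivial step; the rest is bookkeeping.

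Inserting this into the factorised product gives $\mbb{E}\bracs{e^{sS}} \le \exp\pars{\tfrac{1}{8} s^2 N \pars{b-a}^2}$ and hence $\mbb{P}\bracs{S \ge N\delta} \le \exp\pars{-sN\delta + \tfrac{1}{8}s^2 N \pars{b-a}^2}$; minimising over $s$ (optimum $s = 4\delta / \pars{b-a}^2$) yields $\exp\pars{-2N\delta^2 / \pars{b-a}^2}$, and restoring the factor $2$ completes the estimate. The one subtlety I expect concerns the constant: the hypothesis $\abs{X_i} \le M$ gives range $b-a = 2M$ and hence the exponent $-N\delta^2/\pars{2M^2}$, whereas the sharper exponent $-2N\delta^2/M^2$ in the statement requires the effective range to be $M$. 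This is exactly the situation in the application in Lemma~\ref{lem:applied-hoeffding}, where the summands $\ell_{y_i}\pars{u_j} = w\pars{y_i}\abs{u_j}_{y_i}^2$ are nonnegative and bounded by $K$, so that $b - a = M = K$ and the stated constant is attained.
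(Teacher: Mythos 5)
Your proof is correct and is, in essence, the canonical argument: note that the paper itself offers no proof of Lemma~\ref{lem:hoeffding} at all --- it is merely recalled as a standard result with the attribution ``Hoeffding 1963'' --- and the Cram\'er--Chernoff reduction combined with Hoeffding's lemma ($\mbb{E}\bracs{e^{sY}} \le \exp\pars{\tfrac{1}{8}s^2\pars{b-a}^2}$ for centred $Y\in\bracs{a,b}$, via the secant bound and $\psi''\pars{s}\le\tfrac{1}{4}\pars{b-a}^2$) is precisely how the cited result is proved. All your steps check out, including the optimization at $s = 4\delta/\pars{b-a}^2$ giving the exponent $-2N\delta^2/\pars{b-a}^2$.

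Your closing ``subtlety'' deserves emphasis, because you have in fact caught an error in the statement as printed. Under the literal hypothesis $\abs{X_i}\le M$ the admissible range is $b-a = 2M$, and your argument then yields only $2\exp\pars{-N\delta^2/\pars{2M^2}}$; the stated exponent $-2N\delta^2/M^2$ is too strong by a factor of $4$ and is genuinely false in this generality: for symmetric $X_i = \pm M$ the central limit theorem gives a tail of order $\exp\pars{-N\delta^2/\pars{2M^2}}$, which for large $N\delta^2/M^2$ exceeds $2\exp\pars{-2N\delta^2/M^2}$. As you observe, the stated constant is correct when the $X_i$ take values in an interval of \emph{length} $M$, and this is exactly the situation in the only place the lemma is invoked: in Lemma~\ref{lem:applied-hoeffding} the summands $\ell_{y_i}\pars{u_j} = w\pars{y_i}\abs{u_j}_{y_i}^2$ satisfy $0\le \ell_{y_i}\pars{u_j}\le K$ by Lemma~\ref{lem:ell_properties}, so $b-a=K$, and the deviation $\delta/2$ gives $2\exp\pars{-2n\pars{\delta/2}^2/K^2} = 2\exp\pars{-n\delta^2/\pars{2K^2}}$, which is precisely the constant propagated into Lemma~\ref{lem:generalization_error} and Theorem~\ref{thm:rip}. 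The clean fix is to state the hypothesis as $X_i\in\bracs{0,M}$ (or $X_i\in\bracs{a,b}$ with $M \coloneqq b-a$) rather than $\abs{X_i}\le M$; your proof, unchanged, then establishes the lemma exactly as the paper needs it.
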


The proof of Lemma~\ref{lem:applied-hoeffding} is now a mere application of this result.

\begin{lemma}\label{lem:applied-hoeffding}
    Let $u_j \in U\pars{A}$ then
    \begin{align}
        \mbb{P}\bracs*{\abs{\norm{u_j}^2 - \norm{u_j}^2_n} > \tfrac{\delta}{2}}
        \le 2 \exp\pars{-\tfrac{n\delta^2}{2K^2}} .
    \end{align}
\end{lemma}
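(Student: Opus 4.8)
The plan is to recognize the empirical norm $\norm{u_j}_n^2$ as a sample average of independent, identically distributed and bounded random variables whose common expectation is the exact norm $\norm{u_j}^2$, and then to invoke Hoeffding's inequality (Lemma~\ref{lem:hoeffding}) directly.

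First I would introduce, for $i\in\bracs{n}$, the random variables $X_i := \ell_{y_i}\pars{u_j} = w\pars{y_i}\abs{u_j}_{y_i}^2$. Because the sample points $\braces{y_i}_{i=1}^n$ are drawn i.i.d.\ from the measure $w^{-1}\rho$, the $X_i$ are i.i.d.\ as well, and their empirical mean is precisely the empirical norm,
\[
    \overline{X} := \frac{1}{n}\sum_{i=1}^n X_i = \frac{1}{n}\sum_{i=1}^n w\pars{y_i}\abs{u_j}_{y_i}^2 = \norm{u_j}_n^2 .
\]

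The key step, and the one I expect to require the most care, is the expectation computation, where the importance-sampling change of measure is used. Since $y_i$ has density $w^{-1}$ with respect to $\rho$, the weight $w\pars{y}$ appearing in $X_i$ cancels this density exactly, so that
\[
    \mbb{E}\bracs{X_i}
    = \int_Y w\pars{y}\abs{u_j}_y^2 \, w\pars{y}^{-1} \dmx{\rho}{y}
    = \int_Y \abs{u_j}_y^2 \dmx{\rho}{y}
    = \norm{u_j}^2 ,
\]
and therefore $\mbb{E}\bracs{\overline{X}} = \norm{u_j}^2$. This matching of the weight $w$ against the sampling density $w^{-1}$ is exactly what makes $\norm{\bullet}_n^2$ an \emph{unbiased} estimator of $\norm{\bullet}^2$; with a mismatched weight the argument would break down.

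It then remains only to control the range of the summands. Since $u_j\in U\pars{A}$, the first bullet of Lemma~\ref{lem:ell_properties} gives $\abs{X_i} = \abs{\ell_{y_i}\pars{u_j}} \le K$ almost surely, so I may apply Hoeffding's inequality with $N = n$, $M = K$ and threshold $\tfrac{\delta}{2}$ to conclude
\[
    \mbb{P}\bracs*{\abs{\norm{u_j}^2 - \norm{u_j}_n^2} > \tfrac{\delta}{2}}
    = \mbb{P}\bracs*{\abs{\mbb{E}\bracs{\overline{X}} - \overline{X}} > \tfrac{\delta}{2}}
    \le 2\exp\pars*{-\frac{2\pars{\delta/2}^2 n}{K^2}}
    = 2\exp\pars*{-\frac{n\delta^2}{2K^2}} ,
\]
which is the asserted bound. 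The only residual subtlety is that the strict-inequality event $\braces{\,\cdot > \tfrac{\delta}{2}}$ is contained in the event $\braces{\,\cdot \ge \tfrac{\delta}{2}}$ to which Lemma~\ref{lem:hoeffding} is stated, so the probability only decreases.
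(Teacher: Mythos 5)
Your proof is correct and follows essentially the same route as the paper: both apply Hoeffding's inequality (Lemma~\ref{lem:hoeffding}) to the i.i.d.\ random variables $\ell_{y_i}\pars{u_j}$, using the bound $\abs{\ell_{y_i}\pars{u_j}} \le K$ from Lemma~\ref{lem:ell_properties}. In fact you supply a detail the paper leaves implicit, namely the verification via the change of measure $y_i \sim w^{-1}\rho$ that $\mbb{E}\bracs{\norm{u_j}_n^2} = \norm{u_j}^2$, which is exactly the unbiasedness needed for Hoeffding to apply.
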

\renewcommand{\qedsymbol}{\ensuremath{\square}}
\begin{proof}[Proof of Lemma~\ref{lem:applied-hoeffding}]
    The statement follows from an application of Lemma~\ref{lem:hoeffding} to the sequence of random variables $\braces{\ell_{y_i}\pars{u_j}}_{i=1}^n$.
    Since the samples $y_i$ are i.i.d.\ the random variables $\ell_{y_i}\pars{u}$ are i.i.d.\ as well.
    Moreover, by Lemma~\ref{lem:ell_properties} the variables are bounded in absolute value by $K$.
    Therefore, the assumptions for Lemma~\ref{lem:hoeffding} are satisfied.
\end{proof}
\renewcommand{\qedsymbol}{\ensuremath{\blacksquare}}

\section{Proof of Theorem~\ref{thm:pointwise_K}} \label{app:thm:pointwise_K}

To prove the first assertion it suffices to show that $\hat b$ is measurable.
For this let $\braces{u_j}_{j=1}^\infty$ be a countable dense subset in $\mcal{M}$.
Then
\begin{equation}
    \hat b\pars{y} := \sup_{u\in\mcal{M}} \abs{u}_y^2 = \sup_{j\in\mbb{N}} \abs{u_j}_y^2
\end{equation}
is the supremum over a countable set of measurable functions and as such it is measurable.

\revision{If $A$ is $\norm{\bullet}$-bounded and $K\pars{A}$ is finite then $A$ is $\norm{\bullet}_{w,\infty}$-bounded.
From this we can conclude the integrability of $\hat{b}$ by
\begin{equation}
    \int_Y \hat{b}\pars{y} \dmx{\rho}{y} \le \sup_{y\in Y} w\pars{y} \sup_{v\in A} \abs{v}_y^2 \int_Y w\pars{y}^{-1} \dmx{\rho}{y} = \sup_{v\in A} \norm{v}_{w,\infty}^2 .
\end{equation}
It remains to show that the weight function $w = \norm{\hat{b}}_{L^1\pars{Y,\rho}}\hat{b}^{-1}$ is indeed optimal.}
We only sketch the proof of \revision{this} assertion.

By substituting $w = \pars{v\hat{b}}^{-1}$, the minimization problem
\begin{equation}
    \min_w K_{w} \quad\text{s.t.}\quad w \ge 0 \text{ and } \norm{w^{-1}}_{L^1\pars{Y,\rho}} = 1
\end{equation}
is equivalent to
\begin{equation}
    \min_v \norm{v^{-1}}_{L^\infty\pars{Y,\rho}} \quad\text{s.t.}\quad v > 0 \text{ and } \int_Y \hat b v \dx{\rho} = 1,
\end{equation}
which is a non-convex optimization problem under linear constraints.
The assertion is then equivalent to the statement that the minimal $v$ is a constant function \revision{and the constraint $\int_Y \hat b v \dx{\rho} = 1$ implies $w=\norm{\hat{b}}_{L^1\pars{Y,\rho}} \hat{b}^{-1}$.}

\revision{
To prove that a minimal $v$ has to be constant, let $\Omega_1\subseteq Y$ be any measurable subset and $\Omega_2 := Y\setminus\Omega_1$.
Then $v$ can be written as $v = \alpha_1 v_1 + \alpha_2 v_2$ with
\begin{equation}
    \alpha_{k} := \norm{v^{-1}}_{L^{\infty}\pars{\Omega_{k}, \rho}}^{-1} \qquad\text{and}\qquad v_{k} := \frac{v\chi_{\Omega_{k}}}{\alpha_{k}} \qquad\text{for } k=1,2 .
\end{equation}
Now observe that 
\begin{equation}
    \norm{v^{-1}}_{L^{\infty}\pars{Y,\rho}} = \norm{v^{-1}}_{L^{\infty}\pars{\Omega_{1}, \rho}} \vee \norm{v^{-1}}_{L^{\infty}\pars{\Omega_{2}, \rho}} = \alpha_1^{-1} \vee \alpha_2^{-1}.
\end{equation}
Moreover, $v>0$ implies $\alpha_{1}, \alpha_2 > 0$ and the linear constraint can hence be written as $\alpha_1 I_1 + \alpha_2 I_2 = 1$ with $I_{k} := \int_Y \hat{b}v_{k}\dx{\rho}$ for $k=1,2$.
Since $v$ is optimal, it must also satisfy
\begin{equation}
    \min_{\alpha_1,\alpha_2} \alpha_1^{-1}\vee\alpha_2^{-1} \quad\text{s.t.}\quad \alpha_1,\alpha_2 > 0 \text{ and } \alpha_1 I_1 + \alpha_2 I_2 = 1.
\end{equation}
Figure~\ref{fig:opt_w} illustrates why the solution must be $\alpha_1 = \alpha_2$.
This means that an optimal function $v$ has to satisfy $\norm{v^{-1}}_{L^{\infty}\pars{\Omega_{1}, \rho}} = \norm{v^{-1}}_{L^{\infty}\pars{\Omega_{2}, \rho}}$.
The claim now follows since the subset $\Omega_1$ was chosen arbitrarily.
}

\begin{figure}
    \centering
    \begin{subfigure}[b]{0.475\textwidth}
        \centering
        \includegraphics[width=\textwidth]{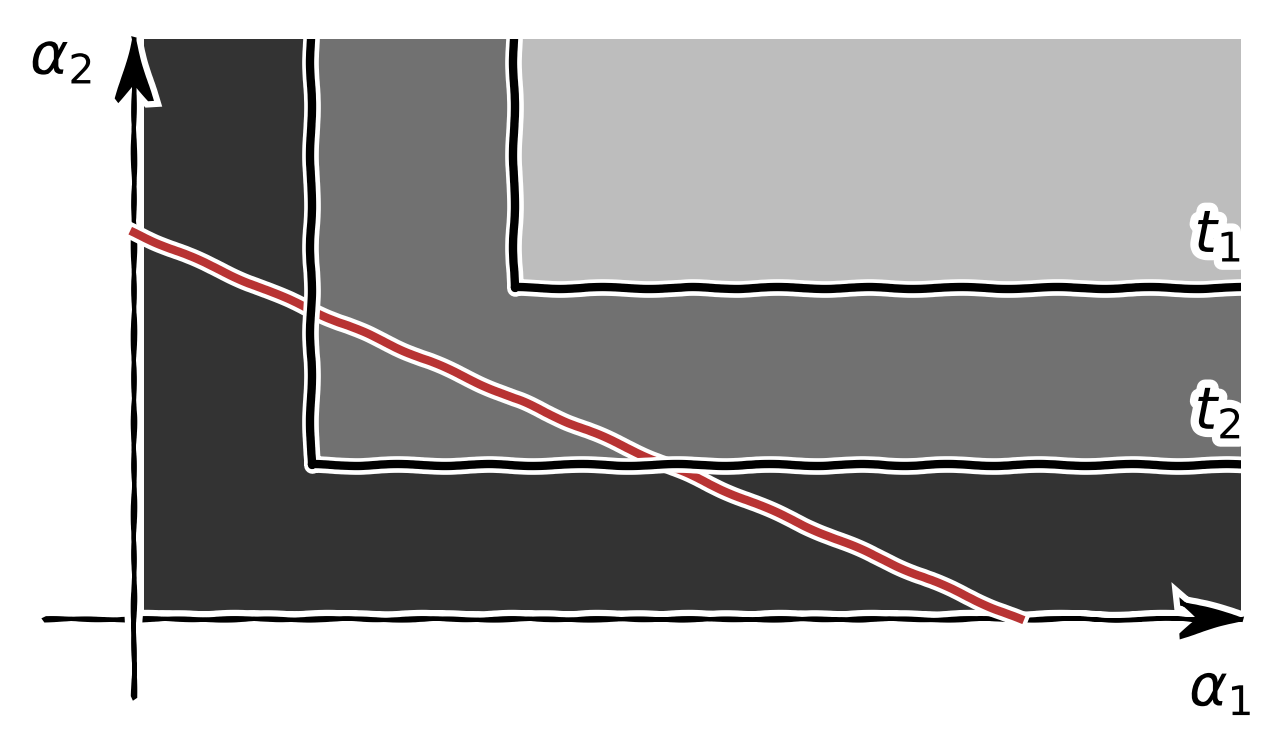}
    \end{subfigure}
    \hfill
    \begin{subfigure}[b]{0.475\textwidth}
        \centering
        \includegraphics[width=\textwidth]{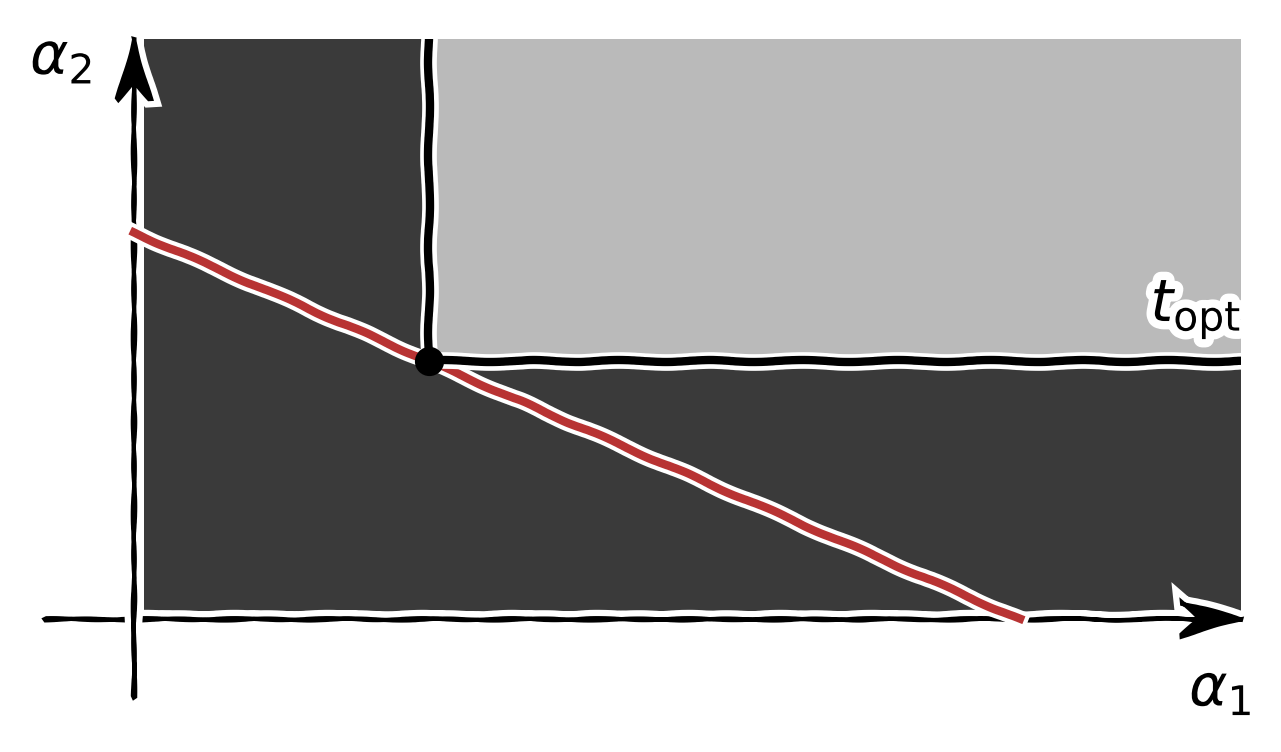}
    \end{subfigure}
    \caption{
        \revision{The set of feasible $\alpha_1, \alpha_2$ satisfying $\alpha_1,\alpha_2>0$ and $\alpha_1I_1 + \alpha_2I_2 = 1$ is displayed in red.
        Contour lines of the function $\pars{\alpha_1,\alpha_2}\mapsto \alpha_1^{-1}\vee\alpha_2^{-1}$ for $t_1 < t_2$ (left) and for the optimal value $t_{\textrm{opt}} = \alpha_1^{-1} = \alpha_2^{-1}$ (right) are drawn in black.}
    }
    \label{fig:opt_w}
\end{figure}

\section{Proof of Example \ref{sec:Hk_seminorm}}\label{app:sec:Hk_seminorm}

Recall that $\mcal{V} := H^m\pars{Y,\rho}$ where $Y\subseteq \mbb{R}^d$ is a Lipschitz domain and $A \subseteq \mcal{H} := H^{M}\pars{Y, \rho}$ with $\ell := M-m > \frac{d}{2}$.
It was shown in~\cite{extension_theorems_sobolev} that since $Y$ is Lipschitz $H^m\pars{Y}$ can be embedded isometrically into $H^m\pars{\mbb{R}^d}$.
This means that we can restrict our analysis to the case $Y = \mbb{R}^d$.
Since $\ell > \frac{d}{2}$, the Sobolev embedding theorem ensures that $D^{\alpha} v \in H^\ell\pars{Y,\rho} \subseteq C^0\pars{Y}$.
This means that the seminorm of $H^m\pars{Y,\rho}$ can be represented by 
\begin{equation}
    \abs{v}_y^2
    = \sum_{\abs{\alpha}\le m} \abs{\bracs{D^{\alpha} v}\pars{y}}^2
    = \sum_{\abs{\alpha}\le m} \abs{L^\alpha_y v}^2
\end{equation}
with the family of linear operators $L^\alpha_y : v \mapsto \bracs{D^\alpha v}\pars{y}$.
In the following we compute
\begin{equation}
    \kappa\pars{y}
    = \norm{L_y}_{\mcal{L}\pars{\mcal{H}, \mbb{R}^{\abs{\braces{\abs{\alpha}\le m}}}}}^2
    = \sum_{\abs{\alpha}\le m} \norm{L^\alpha_y}{\mcal{H}^*}^2 .
\end{equation}

As in~\cite{novak2018reproducing} the Riesz representative of $L^\alpha_y$
\begin{equation}
    K^\alpha_y\pars{x} := \int_{\mbb{R}^d} \frac{\prod_{j=1}^d\pars{2\pi\iu u_j}^{\alpha_j} \exp\pars{2\pi\iu\pars{x-y}\cdot u}}{\sum_{\abs{\beta}\le m+l} \prod_{j=1}^d\pars{2\pi u_j}^{2\beta_j}} \dx{u}
\end{equation}
can be obtained by using the Fourier transform and some standard properties.
Thus,
\begin{align}
    \norm{L^\alpha_y}_{\mcal{H}^*}^2
    &= \norm{K^\alpha_y}_{\mcal{H}}^2
    = \inner{K^\alpha_y, \overline{K^\alpha_y}}_{\mcal{H}}
    = \bracs{D^\alpha \overline{K^\alpha_y}}\pars{y} \\
    &= \int_{\mbb{R}^d} \frac{\prod_{j=1}^d\pars{2\pi u_j}^{2\alpha_j}}{\sum_{\abs{\beta}\le {m+l}} \prod_{j=1}^d\pars{2\pi u_j}^{2\beta_j}} \dx{u}.
\intertext{By the change of variables $t_j = 2\pi u_j$}
    \norm{L^\alpha_y}_{\mcal{H}^*}^2 &= \frac{1}{\pars{2\pi}^d} \int_{\mbb{R}^d} \frac{\prod_{j=1}^d t_j^{2 \alpha_j}}{\sum_{\abs{\beta}\le m+l} \prod_{j=1}^d t_j^{2\beta_j}} \dx{t} .
\end{align}

The multinomial theorem states that
\begin{equation}
    \pars{1+\norm{t}_2^2}^m = \sum_{\abs{\alpha} \le m} \binom{m}{\alpha} \prod_{j=1}^d t_j^{2\alpha_j}.
\end{equation}
As a consequence,
\begin{equation}
    \sum_{\abs{\alpha} \le m} \prod_{j=1}^d t_j^{2\alpha_j}
    \le \pars{1+\norm{t}_2^2}^m
    \le \Gamma\pars{m+1} \sum_{\abs{\alpha} \le m} \prod_{j=1}^d t_j^{2\alpha_j} .
\end{equation}

This leads to the estimate
\begin{align}
    \sum_{\abs{\alpha}\le m} \norm{L^\alpha_y}_{\mcal{H}^*}^2
    &= \frac{1}{\pars{2\pi}^d} \int_{\mbb{R}^d} \frac{\sum_{\abs{\alpha}\le m} \prod_{j=1}^d t_j^{2 \alpha_j}}{\sum_{\abs{\beta}\le m+\ell} \prod_{j=1}^d t_j^{2\beta_j}} \dx{t} \\
    &\le \frac{\Gamma\pars{m+\ell+1}}{\pars{2\pi}^d} \int_{\mbb{R}^d} \frac{\pars{1+\norm{t}_2^2}^m}{\pars{1+\norm{t}_2^2}^{m+\ell}} \dx{t} \\
    &= \frac{\Gamma\pars{m+\ell+1}}{\pars{2\pi}^d} \int_{\mbb{R}^d} \frac{\dx{t}}{\pars{1+\norm{t}_2^2}^{\ell}} \\
    &= \frac{\Gamma\pars{m+\ell+1}}{\pars{2\pi}^d} \frac{2\pi^{d/2}}{\Gamma\pars{\frac{d}{2}}} \int_{0}^{\infty} \frac{s^{d-1}}{\pars{1+s^2}^{\ell}} \dx{s}.
\end{align}

The recurrence relation (2.147) in~\cite{gradshteyn1943tableOI} together with $\ell>\frac{d}{2}$ yields
\begin{equation}
    \int_0^\infty \frac{s^{d-1}}{\pars{1+s^2}^l} \dx{s}
    = \frac{d-2}{2\ell - d} \int_0^\infty \frac{s^{d-3}}{\pars{1+s^2}^\ell} \dx{s}
    = \ldots
    = \frac{\Gamma\pars{\ell-\frac{d}{2}} \Gamma\pars{\frac{d}{2}}}{2\Gamma\pars{\ell}}.
\end{equation}
Consequently,
\begin{equation}
    \kappa\pars{y}
    \le \pars{2\sqrt{\pi}}^{-d} \frac{\Gamma\pars{m+\ell+1} \Gamma\pars{\ell-\frac{d}{2}}}{\Gamma\pars{\ell}} .
\end{equation}

\end{document}